\def\@currentlabel{2.1}\label{e:dispaa}
\def\@currentlabel{2.21}\label{e:dispau}
\def\@currentlabel{2.22}\label{e:dispav}
\def\@currentlabel{2.23}\label{e:dispaw}
\def\@currentlabel{2.24}\label{e:dispax}
\def\theequation{\thesection.\@arabic\c@equation}
\newcommand{\inn}{{\quad\hbox{in } }}
\newcommand{\onn}{{\quad\hbox{on } }}
\newcommand{\ttt}{\tilde }
\newcommand{\nn}{ {\nabla}  }
\newcommand{\pp}{ {\partial} }
\newcommand{\vp}{\varphi}
\newcommand{\R} {\mathbb R}
\newcommand{\cuad}{{\sqcap\kern-.68em\sqcup}}
\newcommand{\foral}{\quad\mbox{for all}\quad}
\newcommand{\ass}{\quad\mbox{as}\quad}
\newcommand{\ve}{\varepsilon}
\newcommand{\be}{\begin{equation}}
\newcommand{\ee}{\end{equation}}
\newcommand{\la}{\lambda}
\newcommand{\equ}[1]{(\ref{#1})}
\renewcommand{\theequation}{\thesection.\arabic{equation}}
 \newtheorem{lema}{Lemma}[section]
 \newtheorem{lemma}{Lemma}[section]
\newtheorem{teo}{Theorem}
\newtheorem{prop}{Proposition}[section]
\newtheorem{corollary}{Corollary}[section]
\newtheorem{remark}{Remark}[section]
\newcommand{\bremark}{\begin{remark} \em}
\newcommand{\eremark}{\end{remark} }
\long\def\red#1{{\color{black}#1}}
\title [infinite-time bubbling in the critical nonlinear heat equation]
{Green's function and infinite-time bubbling in the critical nonlinear heat equation}
\author[C. Cort\'azar]{Carmen Cort\'azar}
\address{\noindent   Departmento of Matem\'aticas, Pontificia Universidad Cat\'olica de Chile, Santiago, Chile.}
\email{\red{ccortaza@mat.puc.cl }}
\author[M. del Pino]{Manuel del Pino}
\address{\noindent   Departamento de
Ingenier\'{\i}a  Matem\'atica and Centro de Modelamiento
 Matem\'atico (UMI 2807 CNRS), Universidad de Chile,
Casilla 170 Correo 3, Santiago,
Chile.}
\email{delpino@dim.uchile.cl}
\author[M. Musso]{Monica Musso}
\address{\noindent   Departmento of Matem\'aticas, Pontificia Universidad Cat\'olica de Chile, Santiago, Chile.}
\email{\red{mmusso@mat.puc.cl }}
\begin{document}

\date{}\maketitle


\begin{abstract}
Let $\Omega$ be  a smooth bounded domain in $\R^n$, $n\ge 5$.
We consider the semilinear heat equation at the critical Sobolev exponent
$$
u_t = \Delta u + u^{\frac{n+2}{n-2}} \inn \Omega\times (0,\infty), \quad u =0 \onn   \pp\Omega\times (0,\infty).
$$
 Let $G(x,y)$ be the Dirichlet Green's function of
$-\Delta$ in $\Omega$ and $H(x,y)$ its regular part.
Let  $q_j\in \Omega$, $j=1,\ldots,k$, be points such that the matrix
$$
 \left [ \begin{matrix} H(q_1, q_1) &  -G(q_1,q_2) &\cdots & -G(q_1, q_k) \\   -G(q_1,q_2) &  H(q_2,q_2) &  -G(q_2,q_3) \cdots & -G(q_3,q_k) \\ \vdots &  & \ddots& \vdots   \\  -G(q_1,q_k) &\cdots&  -G(q_{k-1}, q_k) &  H(q_k,q_k)            \end{matrix}    \right ]
$$
is positive definite. For any $k\ge 1$ such points indeed exist. We prove the existence of a positive smooth solution $u(x,t)$ which blows-up by bubbling in infinite time
 near those points. More precisely,
for large time $t$, $u$ takes the approximate form
$$
u(x,t)   \approx  \sum_{j=1}^k \alpha_n \left ( \frac {   \mu_j(t)}  { \mu_j(t)^2  +  |x-\xi_j(t)|^2 } \right )^{\frac {n-2}2}  .
$$
Here $\xi_j(t) \to q_j$ and $0<\mu_j(t) \to 0$, as $t \to \infty$.
We find that $\mu_j(t) \sim t^{-\frac 1{n-4}} $ as $t\to +\infty$, when $n\geq 5$.

\end{abstract}










\date{}\maketitle

\setcounter{equation}{0}
\section{Introduction and statement of the main result}

Let $\Omega$ be a bounded, smooth domain in $\R^n$, $n\ge3$.  This paper deals with the asymptotic behavior of positive, classical and globally defined in time  solutions $u(x,t)$ of  the critical
semilinear heat equation
\begin{align}\label{P}
u_t   &= \Delta u+ u^{\frac {n+2}{n-2}}   \inn \Omega\times (0, \infty), \\
u &=0 \onn  \partial \Omega\times (0, \infty )  \nonumber\\
u(\cdot ,0) &=u_0\inn  \Omega .\nonumber
\end{align}
where $u_0$ is a positive, smooth  initial datum.

\medskip
The aim of this paper is to construct solutions which exhibit {\em infinite time blow-up}. Problem \equ{P} is a model of parabolic gradient flow for an energy
that loses compactness in the form of {\em bubbling}. The energy
\be\label{E}
E(u) = \frac 1{2}\int  |\nn u|^2 dx\,  -\,   \frac{n-2}{2n}\int  |u|^{\frac {2n}{n-2}}  dx
\ee
is a Lyapunov functional for \equ{P}. In fact, for solution $u(x,t)$ of \equ{P}
we have
$$
\frac{d}{dt} E(u(\cdot ,t)) =  -  \int |u_t|^2 dx
$$
and hence $E$ is  strictly decreasing along trajectories of \equ{P}. Alternatively, \equ{P} corresponds to the formal negative $L^2$-gradient flow for $E$ in
$H_0^1(\Omega)$.

\medskip
A special characteristic of the exponent $\frac{n+2}{n-2}$ in the functional $E$ is
the presence of an asymptotically singular continuum of entire, energy invariant critical points:
let us denote
\be\label{bubble}
U_0(y) = \alpha_n \left ( \frac 1 {1+|y|^2} \right )^{\frac {n-2}2}  , \quad \alpha_n = (n(n-2))^{\frac 1{n-2}} .
\ee
$U_0$ is a positive solution of
$$
\Delta U  +  U^{\frac{n+2}{n-2}} = 0 \inn \R^n
$$
 In fact all positive entire solutions of this equation are given by the {\em Talenti bubbles}
 \be
 U_{\mu, \xi} (x) = \mu^{-\frac{n-2}2} U_0\left ( \frac{x-\xi} {\mu}   \right ) = \alpha_n \left ( \frac {\mu}  { \mu^2 +|x|^2} \right )^{\frac {n-2}2}.
 \label{bubbles}\ee
 which correspond to the extremals of Sobolev's embedding \cite{t}.
This family is energy invariant: for all $\mu,\xi$,
 \be\label{Sn}
 E(U_{\mu, \xi}) = E(U_0) =: S_n >0 .
 \ee
 These functions reflect exactly the loss of compactness of the critical Sobolev embedding
  and thus of the functional $E$: in fact, a Palais-Smale sequence $u_n\in H_0^1(\Omega)$, namely one along which $E$ is bounded and $\nn E$ goes to zero
  must asymptotically be, passing to a subsequence, of the form
 \be\label{bub}
 u_n = u_\infty +  \sum_{i\in I} U_{\mu_n^i, \xi^i_n} + o(1) , \quad
 \ee
 for finite set $I$, a critical point $u_0\in H_0^1(\Omega)$ of $E$, and $\mu_n^i \to 0$, \cite{struwe}. From Pohozaev's identity we must necessarily have $u_\infty=0$ if the domain is star-shaped. This type of behavior, sometimes called bubbling is precisely the one expected along sequences $t=t_n\to+\infty$ for a solution $u(x,t)$ of \equ{P} globally defined in time,
 in particular for the so-called {\em threshold solutions} which we describe next.

 Let $\vp$ be a positive, smooth function defined in $\Omega$, and $u_\alpha (x,t)$
the solution  of Problem \equ{P} with initial datum  $u(x,0) = \alpha \vp(x)$. For small $\alpha>0$, the effect of the nonlinearity is negligible, and $u_\alpha$ goes uniformly to zero as time goes to infinity. Letting
$$ \alpha_* = \sup \{\alpha >0 \ /\  \lim_{t\to \infty} \| u_\alpha(\cdot, t)\|_\infty = 0  \},  $$
then $0<\alpha_*<+\infty$.
Ni, Sacks and Tavantzis, \cite{nst} found that $u_{\alpha_*} (x,t)$    is a well-defined $L^1$-weak solution of \equ{P}.  Since for large values of $\alpha$, finite
 time blow-up is known to happen, $u_{\alpha_*}$ is a type of solution which loosely speaking lies in the dynamic threshold between solutions globally defined in time and those that blow-up in finite time. It is known, see Du \cite{du} and Suzuki \cite{suzuki} that along sequences $t_n\to +\infty$,  $u_n = u_{\alpha_*} (x, t_n)$  does have (up subsequences) a {\em bubble resolution} of the type \equ{bub}.

\medskip
Galaktionov and V\'azquez \cite{gv} found that in the case that $\Omega=B(0,1)$ and the threshold solution $u_{\alpha*}$ is radially symmetric, then no finite time singularities for   $u_{\alpha*}(r,t)$ occur and it must become unbounded as $t\to +\infty$, thus exhibiting infinite-time blow up.  Galaktionov and King discovered in \cite{gk2} that this radial blow-up solution does have a bubbling asymptotic profile as $t\to +\infty$  of the form

\be\label{bubble}
u_{\alpha_*}(r,t)  \ \approx \  \alpha_n \left ( \frac {   \mu(t)}  { \mu(t)^2  +  r^2 } \right )^{\frac {n-2}2} , \quad r=|x|.
\ee
where for $n\ge 5$, $\mu(t) \sim t^{-\frac 1{n-4}}\to 0$. For $\alpha> \alpha_*$ blow-up in finite time of $u_{\alpha_*}(r,t)$ occurs while, it goes to zero when $\alpha< \alpha_*$. Understanding this threshold phenomenon is central in capturing the global dynamics of Problem \equ{P}. These solutions are unstable, while intuitively codimension-one stable in the space of initial conditions containing $\alpha_*\vp$.

 \medskip
Nothing seems to be  known however on existence of infinite-time bubbling solutions in the nonradial case, or about their degree of stability. Our main goal in this paper is to build solutions with single or multiple blow-up points as $t\to +\infty$ in problem \equ{P} when $\Omega$ is arbitrary and $n\ge 5$, providing precise account of their asymptotic form and investigate their stability.

\medskip
Our construction unveils the interesting role played by the (elliptic) Green function of the domain $\Omega$.
In what follows we denote by
 $G(x,y)$  Green's function for the boundary value problem
$$
- \Delta_x G(x,y)   = c_n\delta(x-y) \inn \Omega, \quad G(\cdot ,y ) = 0 \quad\hbox{ on }  \pp\Omega, $$
where $\delta(x)$ is the Dirac mass at the origin and $c_n$ is the number such that
\be\label{fund}
- \Delta_x \Gamma(x)   = c_n\delta(x), \quad   \Gamma(x) =   \frac {\alpha_n}{  |x|^{n-2}}, \quad
\ee
namely $c_n = (n-2) \omega_n\alpha_n$ with $\omega_n$ the surface area of the unit sphere in $\R^n$ and $\alpha_n$ the number in \equ{bubble}.
We let $H(x,y)$ be the regular part of $G(x,y)$ namely the solution of the problem
\be
- \Delta_x H(x,y)   =  0 \inn \Omega, \quad H(\cdot,y ) =    \Gamma (\cdot -y)  \inn \pp\Omega. \label{H}\ee
The diagonal $H(x,x)$ is called the Robin function of $\Omega$. It is well known that it satisfies
\be
 H(x,x) \to  +\infty \quad\hbox{as } {\rm dist} \, (x,\pp\Omega) \to 0. \label{ojo}\ee
Let $q= (q_1, \ldots, q_k$) be  an array of $k$  distinct points in $\Omega$.
\be\label{gxi}
 \mathcal G (q) = \left [ \begin{matrix} H(q_1,q_1) &  -G(q_1,q_2) &\cdots & -G(q_1,q_k) \\   -G(q_1,q_2) &  H(q_2,q_2) &  -G(q_2,q_3) \cdots & -G(q_3,q_k) \\ \vdots &  & \ddots& \vdots   \\  -G(q_1,q_k) &\cdots&  -G(q_{k-1},q_k) &  H(q_k,q_k)            \end{matrix}    \right ]
\ee

Our main result states
that  a global solution to \equ{P}  which blows-up at exactly $k$ given points  $q_j$  exists if  $q$ lies in the open region of $\Omega^k$  where the matrix $\mathcal G (q)$ is positive definite.

\begin{teo}\label{teo1}
Assume $n\ge 5$.
Let $q_1, \cdots, q_k$ be  distinct points in $\Omega$  such that the matrix $\mathcal G (q) $ is positive definite.
Then  there exist an initial datum $u_0$ and smooth functions $\xi_j(t)\to q_j$ and $0<\mu_j(t)\to 0$, as $t\to+\infty$, $j=1,\ldots, k$, such that the solution $u_q$ of Problem  $\equ{P}$ has the form
\be\label{forma1}
u_q(x,t)   =  \sum_{j=1}^k \alpha_n \left ( \frac {   \mu_j(t)}  { \mu_j(t)^2  +  |x-\xi_j(t)|^2 } \right )^{\frac {n-2}2}  -  \mu_j(t)^{\frac {n-2}2} H( x, q_j) + \mu_j(t)^{\frac {n-2}2}\theta(x,t)  ,
\ee
where $\theta(x,t)$ is bounded, and  $\theta(x, t)\to 0$ as $t\to +\infty$, uniformly away from the points $q_j$.
In addition, for certain positive constants $\beta_j$ depending on $q$.
$$
\mu_j(t)  =       \beta_j t^{-\frac 1{n-4}} (1+o(1)) , \quad  \xi_j (t)-q_j = O(t^{-\frac 2{n-4}})  \quad\hbox{as } t\to +\infty
$$

\end{teo}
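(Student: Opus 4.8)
The plan is to carry out an inner--outer gluing construction. Having fixed $q=(q_1,\dots,q_k)$ with $\mathcal G(q)$ positive definite, I look, on a time interval $[t_0,\infty)$ with $t_0$ large, for a solution of \equ{P} of the form
\[
u \;=\; \sum_{j=1}^k\Big(U_{\mu_j(t),\xi_j(t)} - \mu_j(t)^{\frac{n-2}2}H(\cdot,q_j)\Big) \;+\; \sum_{j=1}^k \mu_j(t)^{-\frac{n-2}2}\,\eta_j\,\tilde\phi_j\!\left(\tfrac{\cdot-\xi_j(t)}{\mu_j(t)},t\right) \;+\; \psi(\cdot,t),
\]
where the $\eta_j$ are cut-offs localising near $q_j$, the scalars $0<\mu_j(t)\to0$ and the vectors $\xi_j(t)\to q_j$ are parameters to be determined, $\tilde\phi_j=\tilde\phi_j(y,t)$ is an \emph{inner} profile in the blown-up variable $y=(x-\xi_j)/\mu_j$, and $\psi$ is a global \emph{outer} remainder with $\psi|_{\partial\Omega}=0$. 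Plugging this in and using $\Delta U_{\mu_j,\xi_j}+U_{\mu_j,\xi_j}^{p}=0$, $p=\frac{n+2}{n-2}$, and $\Delta\big(\mu_j^{(n-2)/2}H(\cdot,q_j)\big)=0$, the error of the pure bubble-plus-regular-part part is, near $q_j$, governed by the first order Taylor expansion of the nonlinearity against $-\mu_j^{(n-2)/2}H(x,q_j)$ and the tails $\mu_i^{(n-2)/2}G(x,q_i)$ of the other bubbles: at the concentration scale this is $-\,p\,\mu_j^{\frac{n-2}2}\,[\mathcal G(q)\vec v(t)]_j\,U_0^{p-1}(y)$ with $\vec v(t)=\big(\mu_1(t)^{(n-2)/2},\dots,\mu_k(t)^{(n-2)/2}\big)$, plus lower order terms linear in $y$ that govern $\xi_j$, together with the contribution $\dot\mu_j\,\mu_j\,Z_0(y)+\mu_j\,\dot\xi_j\!\cdot\!\nabla_yU_0(y)$ of $\partial_tU_{\mu_j,\xi_j}$. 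Separating scales converts \equ{P} into a coupled system: an \emph{outer} equation $\psi_t=\Delta\psi+V\psi+(\text{sources concentrated near the }q_j)$, $\psi|_{\partial\Omega}=0$, where $V$ is small away from the bubbles; and, for each $j$, in the variable $y$ and a slow time $\tau_j$ with $\dot\tau_j=\mu_j^{-2}$, an \emph{inner} equation $\partial_{\tau_j}\tilde\phi_j=L_0[\tilde\phi_j]+E_j[\mu,\xi,\psi]$ with $L_0=\Delta_y+pU_0^{p-1}$.

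I would first solve the outer problem: given the inner profiles and the parameters, linear parabolic theory for $\partial_t-\Delta$ with Dirichlet condition, in weighted $L^\infty$ norms encoding the spatial decay $\sum_j|x-q_j|^{2-n}$ and a polynomial temporal decay, yields $\psi$ as a fixed point depending continuously on the remaining data. The heart of the construction is then the linear theory for the inner operator $L_0$, whose bounded kernel in $\mathbb R^n$ is $\mathrm{span}\{Z_0,Z_1,\dots,Z_n\}$ with $Z_0=\tfrac{n-2}2U_0+y\!\cdot\!\nabla U_0$ and $Z_i=\partial_{y_i}U_0$: one must show that $\partial_\tau\tilde\phi=L_0[\tilde\phi]+E$ admits a solution with the required space--time decay (gaining two spatial powers over $E$) provided $E(\cdot,\tau)\perp Z_\ell$ in $L^2(\mathbb R^n)$ for all $\ell$ and all $\tau$. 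This is where $n\ge5$ enters, since $Z_0\in L^2(\mathbb R^n)$ exactly when $2(n-2)>n$; the bounds are delicate because $\mu_j(t)$ decays only polynomially, all the more so when $n=5$, where the relevant exponents are borderline. The $n+1$ solvability constraints per bubble are enforced by adjusting $(\mu_j,\xi_j)$ and constitute the reduced system.

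Computing the projections $\int_{\mathbb R^n}E_j\,Z_\ell\,dy=0$ at leading order gives, for each $j$,
\[
\dot\mu_j(t)\,\mu_j(t) \;=\; -\,c_n\,\mu_j(t)^{\frac{n-2}2}\,[\mathcal G(q)\vec v(t)]_j,\qquad c_n=\frac{p\,(n-2)^2\int_{\mathbb R^n}U_0^{p}}{2(n+2)\,\|Z_0\|_{L^2(\mathbb R^n)}^2}>0,
\]
together with a companion equation for $\xi_j$ whose right-hand side is of order $\mu_j(t)^{n-2}$. Substituting $\mu_j(t)=\beta_j\,t^{-1/(n-4)}$ matches the powers of $t$ in the first equation and reduces it to the algebraic system
\[
[\mathcal G(q)\vec b]_j=\kappa_n\,b_j^{\frac{6-n}{n-2}},\qquad \kappa_n>0,\qquad \vec b=\big(\beta_1^{(n-2)/2},\dots,\beta_k^{(n-2)/2}\big),
\]
which is the Euler--Lagrange equation of $\Psi(\vec b)=\tfrac12\vec b^{\top}\mathcal G(q)\vec b-\tfrac{\kappa_n}{\sigma}\sum_{j}b_j^{\sigma}$, $\sigma=\tfrac4{n-2}\in(0,2)$. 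Since $\mathcal G(q)$ is positive definite, $\Psi$ is coercive on the closed positive orthant and attains a minimiser $\vec b^*$; because $\sigma<2$ the minimum value is negative, so $\vec b^*\ne0$; and the minimiser is interior, for if some $b_j^*=0$ then, using $G(q_i,q_j)>0$ in $\Omega$, $[\mathcal G(q)\vec b^*]_j=-\sum_{i\ne j}G(q_i,q_j)b_i^*<0$, so a one-sided perturbation of the $j$-th coordinate strictly decreases $\Psi$. Hence $\beta_j>0$ for all $j$, in particular $\dot\mu_j<0$ and $\mu_j(t)=\beta_j t^{-1/(n-4)}(1+o(1))$, and since then the $\xi_j$-equation has right-hand side $O(t^{-(n-2)/(n-4)})$, integration gives $\xi_j(t)-q_j=O(t^{-2/(n-4)})$.

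Finally the full nonlinear problem is closed by a fixed-point argument: the map sending the parameters $(\mu_j,\xi_j)$ to the pair $(\psi,(\tilde\phi_j))$ obtained by solving the outer and inner linear problems jointly, composed with the map reading off the corrected parameters from the orthogonality/ODE system, is a contraction (or admits a Schauder fixed point) in the product of the chosen weighted Banach spaces, the required smallness coming from the powers of $\mu_j(t)\to0$; a fixed point is the desired solution $u_q$, and its representation \equ{forma1} with $\theta$ bounded and tending to $0$ away from the $q_j$ follows from the weighted estimates on $\psi$ and $\tilde\phi_j$. The main obstacle is precisely the inner linear theory --- solvability together with sharp a priori bounds for the nonautonomous operator $\mu_j(t)^2\partial_t-L_0$ subject to the orthogonality conditions, in only polynomially decaying space--time weights --- together with the careful bookkeeping of the couplings between the inner regions and the outer region.
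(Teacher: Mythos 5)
Your outline follows the same overall strategy as the paper (inner--outer gluing, weighted $L^\infty$ linear theory for the perturbed heat operator in the outer region, reduction to ODEs for $(\mu_j,\xi_j)$ via orthogonality to the bounded kernel of $L_0$, and a variational analysis of the resulting algebraic system under positive definiteness of $\mathcal G(q)$), and those parts of your sketch are essentially right; in particular your identification of the algebraic system as the Euler--Lagrange system of a coercive functional on the positive orthant, with an interior minimiser yielding $\mu_j\sim\beta_j t^{-1/(n-4)}$, matches the text.

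There is, however, one genuine gap, and it is located exactly where you flag the ``main obstacle,'' the inner linear theory. You assert that $\partial_\tau\tilde\phi=L_0[\tilde\phi]+E$ admits a solution with the required space--time decay \emph{provided only} that $E(\cdot,\tau)$ is $L^2$-orthogonal to the $(n+1)$-dimensional bounded kernel of $L_0$. This is not enough. Besides its bounded kernel, $L_0=\Delta+pU_0^{\,p-1}$ has a simple positive eigenvalue with a radial, positive, exponentially decaying eigenfunction $W$ (the paper denotes this eigenfunction $Z_0$ and writes the eigenvalue as $-\lambda_0>0$; beware that you have re-used the symbol $Z_0$ for the dilation kernel element, which the paper calls $Z_{n+1}$). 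Since $W$ is not in the kernel, your orthogonality conditions say nothing about the projection $e(\tau):=\int\tilde\phi_j(\cdot,\tau)\,W\,dy$, which from the inner equation obeys
\[
\mu_{0j}^2\,\dot e \;=\; (-\lambda_0)\,e \;+\; \int E\,W\,dy,
\]
and therefore grows like $\exp(c\,\tau)$ in the slow time $\tau\sim t^{(n-2)/(n-4)}\to\infty$ for a generic initial datum, no matter how well $E$ decays. Consequently the inner problem with, say, zero initial data does not produce a decaying $\tilde\phi_j$, and the fixed-point scheme as you have described it does not close. The paper resolves this by prescribing the initial condition $\tilde\phi_j(\cdot,\tau_0)=e_{0j}\,W$ with $e_{0j}$ a specific linear functional of the data, chosen so that the exponentially growing branch is tuned out; this costs exactly one linear constraint per concentration point (the origin of the codimension-$k$ manifold in the Corollary), and it is indispensable. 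Your proposal needs to incorporate this extra, per-bubble control of the unstable eigenmode before the conclusion can be reached.
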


\medskip
Our construction of the solution $u_q(x,t)$ in Theorem \ref{teo1}  yields the codimension $k$-stability of its bubbling phenomenon in the following sense.

\begin{corollary} \label{corol1}
Let $u_q$ be the solution found in Theorem \ref{teo1}.
There exists a codimension $k$ manifold in $C^1(\bar\Omega)$  that contains $u_q(x,0)$ such that if $u_0$ lies in that manifold and it is sufficiently close to $u_q(x,0)$,
then the solution $u(x,t)$ of problem $\equ{P}$ has exactly $k$ bubbling points $\ttt q_j$, $j=1, \ldots ,k$ which lie close to the $q_j$.  It has the form $\equ{forma1}$ with $q$ replaced by $\ttt q$.
\end{corollary}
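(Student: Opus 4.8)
The plan is to read the corollary off the \emph{construction} that proves Theorem~\ref{teo1}, not off its statement alone. Recall that $u_q$ is obtained on a time interval $[t_0,\infty)$ by an inner--outer gluing scheme: one seeks a solution in the form
\[
u(x,t) \;=\; \sum_{j=1}^k \Bigl( U_{\mu_j(t),\xi_j(t)}(x) \;-\; \mu_j(t)^{\frac{n-2}{2}} H(x,q_j)\Bigr) \;+\; \phi(x,t),
\]
with modulation parameters $p(t)=(\mu_j(t),\xi_j(t))_{j=1}^k$ and a small remainder $\phi=\phi^{\mathrm{out}}+\sum_{j}\tilde\phi_j$ split into an outer part and inner parts living at scale $\mu_j$ around $\xi_j$. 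Requiring each inner error to be $L^2$-orthogonal to the kernel of the linearized bubble operator $L_0=\Delta+\tfrac{n+2}{n-2}\,U_0^{\frac{4}{n-2}}$ (spanned by the dilation mode and the $n$ translation modes) reduces the problem to: (i) an inner--outer fixed-point system for $\phi$, solved by a contraction for every admissible $p$; and (ii) a reduced system of ODEs for $p$. The translational components $\xi_j$ are adjusted by contraction so that $\xi_j(t)\to q_j$; the scaling components obey, to leading order, $\dot\mu_j = -c_j(q)\,\mu_j^{\,n-3}\,(1+o(1))$ with $c_j(q)>0$ --- this positivity being exactly where positive-definiteness of $\mathcal G(q)$ enters --- and hence $\mu_j(t)\sim\beta_j t^{-\frac{1}{n-4}}$. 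The structural fact that drives the corollary is that the inner linear evolution $\partial_\tau\tilde\phi_j=L_0[\tilde\phi_j]+\cdots$ carries, for each $j$, \emph{exactly one} unstable direction, namely the unique positive eigenvalue $\lambda_0$ of $L_0$ with its radial, positive eigenfunction $Z_0$. Therefore closing the scheme as $t\to+\infty$ forces the vanishing of $k$ scalar quantities (the coefficients of the $e^{\lambda_0\tau}$ growth in the $k$ inner problems), and in the proof of Theorem~\ref{teo1} this is arranged by a $k$-dimensional choice in the data --- one scalar per bubble, essentially the $Z_0$-coefficients of the inner initial data.

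To get the corollary I would run the \emph{same} scheme starting at $t_0=0$, now letting the initial datum vary: write $u_0 = u_q(\cdot,0)+\psi$ with $\psi$ small in $C^1(\bar\Omega)$, and, relative to a fixed decomposition $C^1(\bar\Omega)=V\oplus W$ with $W=\mathrm{span}\{Z_{0,1},\dots,Z_{0,k}\}$ the span of the cut-off unstable inner modes transplanted to $\Omega$, split the perturbation as $\psi=\psi^{\perp}+a\cdot Z$ with $\psi^{\perp}\in V$, $a\in\R^k$. For each $(\psi^{\perp},a)$ in a small ball the contraction step produces a solution of \equ{P} on $[0,\infty)$ of the form \equ{forma1} --- with $q$ replaced by a nearby array $\tilde q=\tilde q(\psi^{\perp},a)$, $\tilde q(0,0)=q$, recording the limits of the $\xi_j(t)$ --- provided a finite-dimensional system $\mathcal E(\psi^{\perp},a)=0$ holds, where $\mathcal E\colon V\times\R^k\to\R^k$ is $C^1$, $\mathcal E(0,0)=0$, and $\partial_a\mathcal E(0,0)$ is invertible; this invertibility is precisely the nondegenerate solvability of the $k$ reduced conditions already used for Theorem~\ref{teo1}. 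The implicit function theorem then gives a $C^1$ map $a=a(\psi^{\perp})$ with $a(0)=0$, and
\[
\mathcal M \;=\; \bigl\{\, u_q(\cdot,0)+\psi^{\perp}+a(\psi^{\perp})\cdot Z \;:\; \|\psi^{\perp}\|_{C^1(\bar\Omega)}\ \text{small}\,\bigr\}
\]
is a $C^1$ submanifold of $C^1(\bar\Omega)$ of codimension $k$ through $u_q(\cdot,0)$. By uniqueness for \equ{P}, every $u_0\in\mathcal M$ sufficiently close to $u_q(\cdot,0)$ is the initial datum of a solution with exactly $k$ bubbling points $\tilde q_j$ near $q_j$ having the profile \equ{forma1} with $q$ replaced by $\tilde q$. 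Equivalently, $\mathcal M$ is the local center-stable manifold of the bubbling solution $u_q$, whose unstable manifold has dimension $k$.

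The real work is not this soft implicit-function step but verifying that the construction of Theorem~\ref{teo1} is \emph{uniformly parametrized}: the weighted-norm linear theory for the inner and outer problems, the solvability of each inner problem modulo its single unstable mode, and the Lipschitz bounds on the nonlinear and error terms must all hold uniformly as the Cauchy data range over a small $C^1$-ball and the limit configuration $\tilde q$ ranges over a neighborhood of $q$ inside the region where $\mathcal G(\tilde q)$ is positive definite; in particular one tracks the continuity of $c_j(\tilde q)>0$ and the persistence of exactly one unstable direction per bubble. A secondary point is that $C^1(\bar\Omega)$ is rougher than the space in which the profile is naturally controlled; this is harmless because $\psi$ enters the scheme only through the solution of a linear heat equation with datum $\psi$, which is instantly smoothing, so $C^1$ is the right topology for the statement. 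With these uniformity facts in hand the codimension count is forced: $k$ bubbles, one unstable mode each, $k$ constraints --- hence a manifold of codimension $k$.
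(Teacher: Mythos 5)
You have the mechanism exactly right, and at the conceptual level this is the same route the paper takes: the $Z_0$-mode (the eigenfunction tied to the unique negative eigenvalue $\lambda_0$ in \eqref{eigen0}) supplies one unstable direction per bubble in the inner parabolic dynamics, so taming the $k$ exponential growths costs $k$ scalar constraints on the initial datum, and the admissible set should be a codimension-$k$ manifold. The paper's Remarks \ref{rmk1} and \ref{rmk2} supply the $C^1$-dependence of $\psi$, $\lambda$, $\xi$ (and hence of $\phi[\psi_0]$ and $e_0[\psi_0]$) on the Cauchy datum $\psi_0$ of the outer problem, which is the ``uniform parametrization'' you flag as the real work --- so that part is indeed done in the text.

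The genuine gap in your argument is the invertibility of $\partial_a \mathcal E(0,0)$. You decompose $\psi = \psi^{\perp} + a\cdot Z$ with $Z=(Z_{0,1},\dots,Z_{0,k})$, obtain a $k\times k$ system $\mathcal E(\psi^{\perp},a)=0$, and solve for $a=a(\psi^{\perp})$ by the implicit function theorem; you assert invertibility of $\partial_a\mathcal E(0,0)$ as ``precisely the nondegenerate solvability of the $k$ reduced conditions already used for Theorem \ref{teo1},'' but that non-degeneracy is established nowhere in the paper, and you do not prove it either. The paper's own argument is designed \emph{not} to need it. It parametrizes initial data via $u(\cdot,0)=u^*_{\ve,q}(\cdot,0)-\sum_j e_{0j}[0]Z_{0j}+F(\psi_0)$ with $F(\psi_0)=\psi_0-\sum_j(e_{0j}[\psi_0]-e_{0j}[0])Z_{0j}$, restricts $\psi_0$ to $W=\bigcap_j\mathrm{Ker}\,D_{\psi_0}e_{0j}[0]$ (codimension $\ttt k\le k$, where $\ttt k$ is the rank of those $k$ functionals), and then introduces the auxiliary map $G\bigl(w+\sum_j\alpha_j e_j\bigr)=\sum_j\alpha_j e_j+F(w)$; the point is that $D_{\psi_0}G(0)=\mathrm{id}$ is automatic (because $DF(0)|_W=\mathrm{id}$), so the local inverse theorem applies with no rank hypothesis and identifies the image of $F|_W$ as a codimension-$\ttt k$ $C^1$-manifold. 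Since $\ttt k$ may in principle be strictly less than $k$, the paper finishes by simply passing to a codimension-$k$ submanifold of it. Your implicit-function step yields a codimension-$k$ manifold only in the case $\ttt k=k$; in the degenerate case $\ttt k<k$ your IFT fails while the paper's argument still goes through. Either supply a proof that $\partial_a\mathcal E(0,0)$ is invertible (e.g.\ a perturbation argument off the decoupled block-diagonal case, using that the inter-bubble coupling is $O(\mu_0^{n-2})$ small), or adopt the paper's $G$-map device, which sidesteps the question entirely.
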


\medskip
Positive definiteness of $\mathcal G (q)$ trivially holds if $k=1$. For $k=2$ this condition holds  if and only if $$H(q_1,q_1)H(q_2,q_2) -G(q_1,q_2)^2 >0, $$ in particular it does not hold if
 both points $q_1$ and $q_2$ are too close to a given point in $\Omega$.  Given $k>1$, using \equ{ojo},   we can always find $k$ points where $\mathcal G (q)$ is positive definite: it suffices to take points located at a uniformly positive distance one to each other, and then let them lie sufficiently close to the boundary.

The role of the matrix $\mathcal G (q)$ in elliptic bubbling phenomena for perturbations of the stationary version of \equ{P} has been known for a long time, after the works
\cite{bc,blr}. In particular in \cite{blr} A. Bahri, Y.-Y. Li and O. Rey analyze bubbling solutions for slightly subcritical perturbations, and positivity of the matrix already appears there. To illustrate why in the parabolic case such a condition is needed, we invoke a computation in \cite{dfm}, that
shows that if
$$
u_0(x) = \sum_{j=1}^k U_{\mu_j, \xi_j}(x) -\mu_j^{\frac {n-2}2} H(x ,\xi_j)
$$
with all $\mu_j$'s small and of comparable order, then
$$
E(u_0) =  kS_n +  \alpha \big (\, \sum_{j=1}^k \mu_j^{n-2} H(\xi_i,\xi_i)  -\sum_{i\ne j} G(\xi_i,\xi_j) \mu_i^{\frac{n-2}2}  \mu_i^{\frac{n-2}2} \big ) +   \hbox{ smaller terms}.
$$
Here $E$ is the energy \equ{E},  $S_n$ is given by \equ{Sn} and  $\alpha$ is a positive constant.
Since $E$ is decreasing along the solution of \equ{P} then we may only end at the $k$-bubble energy $kS_n$ as $t\to\infty$ if $E(u_0) >   kS_n$. If the matrix $\mathcal G (q)$ is positive definite that fact is guaranteed. A formal consideration of balancing needed for the functions $\mu_j(t)$ that we carry out in  \S \ref{sec3} will also lead us to the necessity of that condition to have a solution of the form
\equ{forma1}.

\medskip
The proof consists of building a first approximation to the solution, then solving for a small remainder by means of linearization and fixed point arguments.
In   \S \ref{sec3} we shall construct the first approximation of the form \equ{forma1}. We shall compute the error and  will see that in order to improve the approximation we need solvability conditions  for the elliptic linearized operator around the bubble.   These relations yield a system of ODEs for the scaling parameters, of which we find a suitable solution.
 After this has been achieved, we solve the full problem as a small perturbation by an {\em inner-outer gluing scheme} which is described in \S \ref{sec3new}. This method consists of decomposing the perturbation in the form $\sum_j \eta_j \ttt\phi_j +\psi$ where $\eta_j$ is a smooth cut-off function that vanishes away from the concentration point $q_j$. The tuple $(\ttt \phi, \psi)$ will satisfy a coupled nonlinear system where the operator for $\psi$ is just a small perturbation of the standard heat operator, and the equation for the
$\ttt \phi_j$ involves the parabolic linearized equation at the scale of the bubble. We solve the outer problem for $\psi$, for a given suitable decaying $\ttt \phi$ and derive estimates for it in \S \ref{outer}.

\medskip
A delicate matter is the construction of an inverse of the operator at $\ttt \phi_j$  (provided that certain solvability conditions for the errors hold) so that the solution has a sufficiently fast decay. We do this in  \S \ref{seclineartheory}.  The solvability conditions are achieved after adjusting lower order terms of the parameters in \S \ref{sec5}.  After this is achieved the coupling gets sufficiently weak so that contraction mapping principle applies to finally solve the problem. Arbitrary small initial data for $\psi$ can be imposed, while the inverse built for the linear operator at $\phi_j$ requires one linear constraint in the initial condition.  This is ultimately the reason for the codimension $k$-stability of the bubbling solution found. We prove the main result in \S \ref{final}. The inner-outer gluing technique has been a very useful tool in finding solutions for singular perturbation elliptic problems with higher dimensional concentration set, as developed in \cite{dkw,dkw1,dmp}. We believe it applies to the construction of bubbling solutions in various parabolic flows where this critical loss of compactness arises, and in the construction of type II blow up in various problems.
The machinery employed is parabolic in nature, but we believe the inner-outer gluing principle may be applicable to other evolution problems.

\medskip
Before proceeding into the proof, we make some further bibliographic comments.
Large literature has been devoted to the more general equation with a power nonlinearity $$u_t= \Delta u + |u|^{p-1}u, $$ sometimes called the Fujita equation, after \cite{fujita}.
   on asymptotic behavior and finite-time blow-up.
 We refer the reader to the book by Quittner and Souplet \cite{qs} for
a comprehensive survey of results until 2007.
In particular, the role of  the Sobolev critical exponent $p=\frac{n+2}{n-2}$ in existence and characteristics of blow-up phenomena has been broadly considered, see also Matano and Merle \cite{matanomerle} for a more recent classification of radial blow-up and supercritical powers and Schweyer \cite{schweyer} for a construction of radial type-II blow-up in the critical case.  In most results available of construction of solutions radial symmetry is a central feature.
Our main result shares the flavor of that by Merle and Zaag \cite{mz}, where multiple-point, finite time  type I blow-up and its stability was found in the subcritical case.
We point out that the corresponding energy-critical wave equation,
$$
u_{tt} = \Delta u + |u|^{\frac 4{n-2}}u
$$
and the role of the Talenti bubbles \equ{bubbles} in its dynamics, in particular bubbling blow-up has been the subject of many studies. See for instance
\cite{dkm,kenigmerle,kst}.

\medskip
A natural question is whether or not there exist solutions with {\em multiple bubbling}, namely with the form of a superposition of
bubbles of different rates around a given point.
From a result by Schoen, this is not possible in the slightly subcritical stationary version of \equ{P}, see \cite{blr}.
We believe this is also the case in \equ{P} as $t\to +\infty$. On the other hand
when $t\to -\infty$, ancient solutions with this pattern may exist. This is indeed the case for the conformally invariant variation of \equ{P},  the  {\em Yamabe flow in $\R^n$},
$$
	(u^{\frac{n+2}{n-2}})_t   = \Delta u  + u^{\frac{n+2}{n-2} }\inn \R^n\times (-\infty, 0]
	.$$
corresponding to the conformal evolution of metrics by scalar curvature. It has been proven in \cite{dd} that  there exist radially symmetric ``ancient towers of bubbles''. In the elliptic case they appear at slightly supercritical powers, see \cite{ddm}.

\setcounter{equation}{0}
\section{Construction of the approximate solution and error computations}\label{sec3}

We consider the Talenti bubbles \equ{bubbles} which we recall are given by
\begin{equation}
	\label{defbubble}
	U(y) = \alpha_n \left( {1 \over 1+ |y|^2 } \right)^{n-2 \over 2}, \quad \alpha_n= (n(n-2))^ {\frac {n-2}4},\quad
\end{equation}
and
$$
 U_{\mu , \xi } (x) = \mu^{-{n-2 \over 2}} U\left ({x-\xi  \over \mu }\right ), \quad \mu>0, \quad \xi\in \R^n.
$$
Given $k$ points $q_1, \ldots, q_k\in \R^n$, we want to find a solution $u(x,t)$ of equation \equ{P} with the approximate form
\be\label{oo}
u(x,t)  \approx \sum_{j=1}^k  U_{\mu_j(t) , \xi_j(t) } (x)
\ee
where $\xi_j(t) \to q_j$ and $\mu_j(t)\to 0$ as $t\to \infty$ for each $j=1,\ldots, k$.
The functions $\xi_j$ and $\mu_j$ cannot of course be arbitrary.  We make an ansatz for these parameters. To begin with, we assume that the vanishing speed of all functions
$\mu_j(t)$ is the same. More precisely, we assume that for a certain fixed positive function $\mu_0(t)\to 0 $ and positive constants $b_1,\ldots, b_k$ we have that
$$
\mu_j(t) = b_j \mu_0(t) + O(\mu_0^2(t)) \ass t\to \infty.
$$
At the same time we assume that
 $$
 \xi_j (t) - q_j =  O(\mu_0^2(t)) \ass t\to \infty.
 $$
Since the right hand side of expression \equ{oo} does not satisfy the zero boundary condition, we shall first identify a correction term which is consistent with this ansatz. Then we will determine what the functions $\mu_0(t)$ and the constants $b_j$ should be. Since, away from the concentration points $q_j$ the functions $U_{\mu_j , \xi_j}$ are uniformly small, we see that
$u_t$ should approximately satisfy
$$
u_t \approx \Delta u  +   \sum_{j=1}^k U_{\mu_j,q}(x) ^p
$$
Besides, we see that
$$
\int_\Omega U_{\mu_j,q}(x)^pdx \approx   \mu_j^{\frac{n-2}2 }  a_n, \quad    a_n := \int_{\R^n} U(y)^pdy ,
$$
and hence away from the points $q_j$ the equation should be well approximated by
$$
u_t \approx \Delta u  +    c_n \mu_0^{\frac{n-2}2 }    \sum_{j=1}^k   b_j^{\frac{n-2}2 }  \delta_{q_j}  \inn \Omega \times (0,\infty) .
$$
where $\delta_q$ is the Dirac mass at the point $q$.
Letting $u = \mu_0^{\frac{n-2}2 } v$ we get
$$
v_t  \approx  \Delta v  - \frac {n-2} 2 \mu_0^{-1} {\dot \mu_0}v  + c_n   \sum_{j=1}^k   b_j^{\frac{n-2}2 }  \delta_{q_j}  \inn \Omega \times (0,\infty) .
$$
where $\dot {\{ \}} = \frac d{dt} $.  We assume, as it will be a priori satisfied that  $\mu_0^{-1} {\dot \mu_0}\to 0$, which is the case for instance if $\mu_0 \sim t^{-a} $.
Hence
$$
v_t  \approx  \Delta v   +   a_n\sum_{j=1}^k   b_j^{\frac{n-2}2 }  \delta_{q_j}  \inn \Omega \times (0,\infty) ,
$$
$$
v = 0 \onn \pp \Omega \times (0,\infty) .
$$
This tells us that away from the points $q_j$ we should have
$$v(x,t) \approx a_n\sum_{j=1}^k   b_j^{\frac{n-2}2 } G(x,q_j)   , $$ in other
words
$$
u(x,t) \approx  \sum_{j=1}^ k    \frac {\alpha_n \mu_j^{\frac{n-2}2} }{|x-q_j|^{n-2}} -\mu_j^{\frac{n-2}2} H(x,q_j).
$$
Observing that for $x$ away from the point $q_j$,  we precisely have $$U_{\mu_j,\xi_j} (x) \approx  \frac {\alpha_n \mu_j^{\frac{n-2}2} }{|x-q_j|^{n-2}} $$
we see that a better global approximation to a solution $u(x,t)$ to our problem is given by the corrected $k$-bubble
\be\label{ansatz0}
u_{\xi , \mu} ( x,t) : =   \sum_{j=1}^ k u_j(x,t), \quad   u_j(x,t):=  U_{\mu_j,\xi_j} (x)    -\mu_j^{\frac{n-2}2} H(x,q_j).
\ee
We have obtained this correction term out of a rough analysis to what is happening away from the blow-up points. Let us now analyze the region near them. That will allow us
to identify the function $\mu_0(t)$ and the constants $b_j$.
It is convenient to write
$$
S(u) :=  -u_t + \Delta_x u  +   u^p .
$$
We consider the error of approximation $S(u_0)$. We have
$$
S( u_{\mu , \xi}  )  =  -  \sum_{i=1}^k   \pp_t u_i  + \left ( \sum_{i=1}^k  u_i \right ) ^p  - \sum_{i=1}^k  U_{\mu_i,\xi_i}^p .
$$
We obtain the following estimate near a given concentration point $q_j$, from where the formal asymptotic derivation of the unknown parameters will be a rather direct consequence.

\begin{lemma}\label{errore}
Let us fix an index $j$ and consider $x$ with $|x-q_j| \le \frac 12 \min_{i\ne l} |q_i-q_l|$.
Then setting $x= \xi_j + \mu_j y_j $, we have that for all $t$ large, the error of approximation
 $S(u_{ \mu , \xi})$ can be estimated as
$$
S(u_{ \mu , \xi}) =   \mu_j^{-\frac{n+2}2} \big ( \mu_j E_{0j}[\mu,\dot \mu_j] +  \mu_j E_{1j}[\mu,\dot \xi_j]  \ + \ \mathcal R_j \, \big)
$$
where
$$
 E_{0j}[\mu,\dot\mu_j]= pU(y_j )^{p-1}\big [ -\mu_j^{n-3} H(q_j ,q_j)    \, + \,   \sum_{i\ne j}  \mu_j ^{\frac{n-4}2} \mu_i^{\frac {n-2}2}  G( q_j,q_i ) \big  ]  \  +\    \dot \mu_j Z_{n+1}(y_j ) ,
$$
$$
 E_{1j}[\mu,\dot\xi_j] =   pU(y_j)^{p-1} \big [-\mu_j^{n-2} \nn_x H(q_j ,q_j)   \,  + \,  \sum_{i\ne j}  \mu_j^{\frac {n-2}2} \mu_i^{\frac {n-2}2}  \nn_x G( q_j,q_i )\, \big ] \cdot  y_j \ +\       \dot \xi_j \cdot \nn U(y_j) ,
$$
and
\be\label{Rj}
\mathcal R_j 
 \ =\   \frac { \mu_0^ng } {1+|y_j|^2} +
  \frac { \mu_0^{n-2} \vec g } {1+|y_j|^4} \cdot (\xi_j -q_j) +
  \mu_0^{n+2}  f\, +\,  \mu_0^{n-1} \sum_{i=1}^k   \dot \mu_i  f_{i} \,  +\,  \mu_0^{n} \sum_{i=1}^k   \dot \xi_i  \cdot \vec f_{i}
\ee
where the functions $f,f_i,\vec f_i$ are smooth, bounded functions of the tuple  $( y, \mu_0^{-1} \mu, \xi, \mu_jy_j )$, and   $g, \vec g$ of $( y, \mu_0^{-1} \mu, \xi )$.
Furthermore,
$$
Z_{n+1} (y) := {n-2 \over 2} U(y) + \nabla U(y) \cdot y.
$$

\end{lemma}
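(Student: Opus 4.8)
\medskip

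The plan is to reduce the error $S(u_{\mu,\xi})$ to a local computation around each concentration point. First I would use the structural equations of the building blocks: each $H(\cdot,q_i)$ is harmonic in $\Omega$ and each $U_{\mu_i,\xi_i}$ solves $\Delta U_{\mu_i,\xi_i}+U_{\mu_i,\xi_i}^p=0$, so $\Delta_x u_i=-U_{\mu_i,\xi_i}^p$ and hence
$$
S(u_{\mu,\xi})=-\sum_{i=1}^k\partial_t u_i+\Big(\sum_{i=1}^k u_i\Big)^p-\sum_{i=1}^k U_{\mu_i,\xi_i}^p .
$$
The time derivatives are computed exactly by the chain rule: with $y_i=(x-\xi_i)/\mu_i$ and the identities $\partial_\mu\big(\mu^{-\frac{n-2}2}U(\cdot/\mu)\big)=-\mu^{-\frac n2}Z_{n+1}(\cdot/\mu)$ and $\partial_\xi\big(\mu^{-\frac{n-2}2}U((\cdot-\xi)/\mu)\big)=-\mu^{-\frac n2}\nabla U((\cdot-\xi)/\mu)$, one obtains
$$
-\partial_t u_i=\mu_i^{-\frac n2}\big[\dot\mu_i\,Z_{n+1}(y_i)+\dot\xi_i\cdot\nabla U(y_i)\big]+\frac{n-2}2\,\mu_i^{\frac{n-4}2}\dot\mu_i\,H(x,q_i).
$$

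Next, fixing $j$ and working on the fixed ball $|x-q_j|\le\frac12\min_{i\ne l}|q_i-q_l|$, I would set $x=\xi_j+\mu_j y_j$ and write $u_{\mu,\xi}=U_{\mu_j,\xi_j}+\Phi_j$. On this ball each $u_i$ with $i\ne j$ is smooth in $x$, and since $U_{\mu_i,\xi_i}(x)=\mu_i^{\frac{n-2}2}\Gamma(x-q_i)+O(\mu_0^{\frac{n+2}2})$ there, one has $\Phi_j(x)=-\mu_j^{\frac{n-2}2}H(x,q_j)+\sum_{i\ne j}\mu_i^{\frac{n-2}2}G(x,q_i)+O(\mu_0^{\frac{n+2}2})$, a smooth function of order $\mu_0^{\frac{n-2}2}$. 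Writing $(U_{\mu_j,\xi_j}+\Phi_j)^p=U_{\mu_j,\xi_j}^p+pU_{\mu_j,\xi_j}^{p-1}\Phi_j+\mathcal Q_j$, Taylor-expanding $\Phi_j$ to first order at $q_j$ with $x-q_j=\mu_j y_j+(\xi_j-q_j)$, and using $pU_{\mu_j,\xi_j}^{p-1}=p\mu_j^{-2}U(y_j)^{p-1}$ (recall $p-1=\frac4{n-2}$), the constant term of $\Phi_j$ produces $\mu_j^{-\frac{n+2}2}\mu_j\, pU(y_j)^{p-1}\big[-\mu_j^{n-3}H(q_j,q_j)+\sum_{i\ne j}\mu_j^{\frac{n-4}2}\mu_i^{\frac{n-2}2}G(q_j,q_i)\big]$; the part of the linear term along $\mu_j y_j$ produces the analogous expression with $\nabla_x H$, $\nabla_x G$ and a trailing factor $y_j$; and the $i=j$ term of $-\partial_t u_i$ supplies $\mu_j^{-\frac{n+2}2}\mu_j\big[\dot\mu_j Z_{n+1}(y_j)+\dot\xi_j\cdot\nabla U(y_j)\big]$. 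These are exactly $\mu_j^{-\frac{n+2}2}\big(\mu_j E_{0j}[\mu,\dot\mu_j]+\mu_j E_{1j}[\mu,\dot\xi_j]\big)$.

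All remaining terms go into $\mathcal R_j$, and I would check they fit the stated form. The second-order Taylor remainder of $\Phi_j$, multiplied by $p\mu_j^{-2}U(y_j)^{p-1}$ and using $|x-q_j|^2\sim\mu_j^2|y_j|^2$, equals $\mu_0^n$ times $U(y_j)^{p-1}(1+|y_j|^2)$, a bounded function, divided by $1+|y_j|^2$; this is the first term of $\equ{Rj}$, and the part of the linear term along $\xi_j-q_j$ gives the second. The $O(\mu_0^{\frac{n+2}2})$ correction in $\Phi_j$ together with the $i\ne j$ pieces of $\sum_i U_{\mu_i,\xi_i}^p$ feed the $\mu_0^{n+2}f$ term, while the Robin-function time derivatives and the $i\ne j$ parts of $-\partial_t u_i$, for which $Z_{n+1}(y_i)$ and $\nabla U(y_i)$ are $O(\mu_i^{n-2})$ and $O(\mu_i^{n-1})$ on this ball, give the $\mu_0^{n-1}\dot\mu_i f_i$ and $\mu_0^n\dot\xi_i\cdot\vec f_i$ terms. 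The nonlinear remainder $\mathcal Q_j$ is absorbed after estimating it by $C\min\big(U_{\mu_j,\xi_j}^{p-2}\Phi_j^2,\,|\Phi_j|^p\big)$. Using the ansatz $\mu_i\sim b_i\mu_0$ and $|\xi_i-q_i|=O(\mu_0^2)$, one checks that every contribution matches one of the five types in $\equ{Rj}$, the coefficients being smooth bounded functions of the indicated arguments, their $\mu_j y_j$-dependence entering only through the evaluation of $H$ and $G$ at $x=\xi_j+\mu_j y_j$.

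The main obstacle is twofold. First, the bookkeeping confirming that each leftover has exactly the claimed power of $\mu_0$ and decay in $y_j$ --- in particular that the second-order Taylor term is genuinely $O(\mu_0^n)$ and not larger --- which rests on the uniform comparability $\mu_i\sim\mu_0$ and the ansatz $|\xi_i-q_i|=O(\mu_0^2)$. Second, the low regularity of $s\mapsto s^p$ when $n\ge7$ (then $p<2$), which makes $\mathcal Q_j$ only $C^1$-controllable: one must interpolate between the quadratic bound $U_{\mu_j,\xi_j}^{p-2}\Phi_j^2$, valid where the bubble dominates, and the bound $|\Phi_j|^p$, valid near the edge of the fixed ball where $\Phi_j$ becomes comparable to $U_{\mu_j,\xi_j}$.
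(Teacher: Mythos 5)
Your proposal is correct and follows essentially the same route as the paper's proof: split $S(u_{\mu,\xi})$ into a time-derivative part and a nonlinear part, write $u_{\mu,\xi}=U_{\mu_j,\xi_j}+\Phi_j$ near $q_j$ (your $\Phi_j$ is the unscaled version of the paper's $\Theta_j=\mu_j^{(n-2)/2}\Phi_j$), Taylor-expand the power to linear order and $\Phi_j$ to first order at $q_j$, read off $E_{0j}$ and $E_{1j}$ from the constant and $\mu_j y_j$-linear coefficients, and absorb the higher-order Taylor remainders, the $i\ne j$ bubble powers and time derivatives, and the quadratic/nonlinear remainder $\mathcal Q_j$ into $\mathcal R_j$. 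Your remark that one must switch between $U_{\mu_j,\xi_j}^{p-2}\Phi_j^2$ and $|\Phi_j|^p$ bounds on $\mathcal Q_j$ when $n\ge 7$ is a slightly more careful treatment of the Taylor remainder than the paper's "we may assume $U(y_j)^{-1}|\Theta_j|<\frac12$," but the conclusion is the same.
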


\begin{proof}
We  write
$$
u_{\mu,\xi}  (x,t) =  \sum_{i=1}^k  \mu_i^{-\frac{n-2}2} U(y_i)  -   \mu_i^{\frac{n-2}2} H(x,q_i), \quad y_i =\frac{x-\xi_i}{\mu_i} .
$$
and
$$
S(u_{\mu,\xi} ) =  S_1 + S_2
$$
where
\be \label{form1}
S_1 \ :=\   \sum_{i=1}^ k  \mu_i^{-\frac n2} \dot \xi_i \cdot \nn U(y_i)  +  \mu_i^{-\frac n2} \dot \mu_i Z_{n+1}(y_i)  + {n-2 \over 2}   \mu_i^{\frac{n-4}2} \dot \mu_i  H(x,q_i),
\ee
and
\be \label{form2}
S_2\ :=\    \left ( \sum_{i=1}^k  \mu_i^{-\frac {n-2}2}   U(y_i)  -  \mu_i^{\frac{n-2}2} H(x,q_i) \right )^p - \sum_{i=1}^k  \mu_i^{-\frac {n+2}2}   U(y_i)^p .
\ee

We further write
$$
S_2 = S_{21} + S_{22}
$$
where
$$
S_{21} :=  \mu_j^{-\frac {n+2}2} \left [\,  \left (  U(y_j)  + \Theta_j  \right )^p  -  U(y_j)^p \right ], \quad S_{22} := - \sum_{i\ne j }^k  \mu_i^{-\frac {n+2}2}   U(y_i)^p,
$$
with
\be\label{thetaj}
\Theta_j  =  -\mu_j^{n-2} H(x,q_j) +  \sum_{i\ne j}   \left (  \mu_j \mu_i^{-1} \right )^{\frac{n-2}2}  U(y_i)  - (\mu_j\mu_i)^{\frac{n-2}2} H(x,q_i).
\ee
We observe that $|\Theta_j| \lesssim \mu_0^{n-2}$ uniformly in small $\delta$. Hence in particular we may assume $U(y_j)^{-1}|\Theta_j| < \frac 12 $ in the considered region.
We can therefore Taylor expand
$$
S_{21} = \mu_j^{-\frac {n+2}2}   \left [\,  pU(y_j) ^{p-1} \Theta_j  +   p(p-1)\int_0^1 (1-s)(U(y_j) + s\Theta_j) ^{p-2} ds\, \Theta_j^2 \right ]
$$

We make some further expansion.
We have, for $i\ne j$,
\begin{align*}
U(y_i) = &\  U \left (   \frac { \mu_j y_j +  \xi_j - \xi_i} {\mu_i} \right )  =
\frac{\alpha_n \mu_i^{n-2}} { (|\mu_j y_j +  \xi_j - \xi_i|^2 + \mu_i^2 )^{\frac{n-2}2} } \ \\
= &\
 \frac{\alpha_n \mu_i^{n-2}} {| \mu_j y_j + x_j - x_i|^{{n-2}} }  + \mu_i^{n}f(\xi, \mu, \mu_jy_j)
\end{align*}
where $f$ is smooth in its arguments and  $f(q, 0, 0) = 0$. Then we can write
$$
\Theta_j =       -\mu_j^{n-2} H(x_j + \mu_j y_j ,q_j)     +   \sum_{i\ne j} (\mu_i\mu_j )^{\frac {n-2}2}  G(x_j + \mu_j y_j ,q_i )  + \mu_i^{n}f(\xi, \mu, \mu_jy_j).
$$
Further expanding, we get
\begin{align*}
\Theta_j\ =&\  -\mu_j^{n-2} H(q_j ,q_j)     +   \sum_{i\ne j} (\mu_i\mu_j )^{\frac {n-2}2}  G( q_j,q_i )   +   \mu_i^{n}f(\xi, \mu, \mu_jy_j) \quad \\
+ &\
\Big [-\mu_j^{n-2} \nn_x H(q_j ,q_j)     +   \sum_{i\ne j} (\mu_i\mu_j )^{\frac {n-2}2}  \nn_x G( q_j,q_i )\, \Big ] \cdot (\mu_j y_j + \xi_j -q_j )\quad
\\
+ &\  \
  \int_0^1     \Big \{ -\mu_j^{n-2} D^2_x H(q_j+ s(\xi_j -q_j +\mu_j y_j ) ,q_j)\quad
\\
+ &\ \
  \sum_{i\ne j} (\mu_i\mu_j )^{\frac {n-2}2}  D^2_x G( q_j+ s(\xi_j -q_j +\mu_j y_j) ,q_i )\, \Big\} \,  [\xi_j -q_j +\mu_j y_j ]^2(1-s)\,ds\, .
\end{align*}
We conclude that
\begin{align*}
\Theta_j        \ =&\  -\mu_j^{n-2} H(q_j ,q_j)     +   \sum_{i\ne j} (\mu_i\mu_j )^{\frac {n-2}2}  G( q_j,q_i )\  \\
&+
\Big [-\mu_j^{n-1} \nn_x H(q_j ,q_j)     +   \sum_{i\ne j}  \mu_j^{\frac n2} \mu_i^{\frac {n-2}2}  \nn_x G( q_j,q_i )\, \Big ] \cdot  y_j \\
 +&\
\mu_0^{n-2} (\xi_j -q_j)\cdot f(\xi, \mu_j y_j , \mu_0^{-1}\mu )  +  \mu_0^n  F(\xi, \mu_j y_j , \mu_0^{-1}\mu )[y_j]^2
\end{align*}
where $f$ anf $F$ are smooth in its arguments, and bounded.
On the other hand,
$$
S_{22} := - \sum_{i\ne j }^k  \mu_i^{-\frac {n+2}2}   U(y_i)^p=
 - \sum_{i\ne j }^k
 \frac{\alpha_n^p \mu_i^{\frac {n+2}2}} {| q_j - q_i|^{{n+2}} }  + \mu_i^{\frac{n+2} 2}f_i(\xi, \mu, \mu_jy_j)
$$
so that
$$
S_{22} :=    \mu_0^{\frac{n+2} 2}  f(\xi, \mu_0^{-1} \mu, \mu_jy_j)
$$
where $f$ is smooth in its arguments and  $f_i(q, 0, 0) = 0$.

\medskip
We also decompose $S_1 = S_{11} + S_{12}$ where
$$
S_{11} =  \mu_j^{-\frac n2} \dot \xi_j \cdot \nn U(y_j)     +    \mu_j^{-\frac n2} \dot \mu_j Z_{n+1}(y_j) ,
$$
$$
S_{12} =\sum_{i\ne j}   \mu_i^{-\frac n2} \dot \xi_i \cdot \nn U(y_i)  +  \mu_i^{-\frac n2} \dot \mu_i Z_{n+1}(y_i)  +  \sum_{i=1}^k {n-2 \over 2}  \mu_i^{\frac{n-4}2} \dot \mu_i  H(x,q_i)
$$
We can write
$$
S_{12} = \sum_{i\ne j}  \alpha_n  \mu_i^{\frac n2 -1} \dot \xi_i \cdot \left [ \frac {q_i -q_j } {| q_i -q_j|^n }  + \vec f_i(\xi, \mu, \mu_jy)  \right ]   \, +
\mu_i^{\frac {n-4} 2} \dot \mu_i \left [  \frac {c_n} { | q_i-q_j|^{n-2}} + f_i(\xi, \mu, \mu_jy) \right ]
$$
$$
 + \sum_{i=1}^k  \mu_i^{\frac{n-4}2} \dot \mu_i [ H(q_j,q_i) +  f_i (\mu_j y , \xi )]
$$
where $\vec f_i$ are smooth in their arguments vanishing in the limit. In total we can write
$$
S_{12} =     \mu_0^{\frac{n-4}2} \sum_{i=1}^k   \dot \mu_i  f_{i0} ( \mu_0^{-1} \mu, \xi, \mu_jy )  +  \mu_0^{\frac{n-2}2} \sum_{i=1}^k   \dot \xi_i  \cdot\vec f_{i1} ( \mu_0^{-1} \mu, \xi, \mu_jy )
 $$
for functions $f_{i0}$, $\vec f_{i1}$ smooth in their arguments. This concludes the proof of the Lemma.
\end{proof}

\medskip
On the other hand, in the region $|x-q_i | > \delta$ for any $i=1, \ldots , k$,  we can describe the error function $S (u_{\mu , \xi } )$ as follows
\be \label{form3}
S (u_{\mu , \xi } ) (x,t) = \mu_0^{n-4 \over 2} \sum_{j=1}^k \dot \lambda_j f_j +
\mu_0^{n-2 \over 2} \sum_{j=1}^k \dot \xi_j \cdot \vec f_j + \mu_0^{n+2 \over 2} f
\ee
where $f_j$, $\vec f_j$, $f$ are smooth and bounded functions of $( x, \mu_0^{-1} \mu, \xi )$. This fact is a direct consequence of \eqref{form1} and \eqref{form2}.

\bigskip
\subsection*{The choice of the parameters at main order and improvement of the approximation }
We are looking for a solution of our equation of the form
$$
u(x,t) = u_{\mu,\xi}(x,t)  + \ttt  \phi(x,t)
$$
where $\ttt \phi$ is globally small compared with $u_{\mu,\xi}$.
In terms of $\ttt \phi$ the equation reads
$$
0 = S( u_{\mu,\xi}  + \ttt  \phi )  =  - \pp_t \ttt  \phi + \Delta_x \ttt \phi +  pu_{\mu,\xi}^{p-1} \ttt\phi + S( u_{\mu,\xi}) + \ttt N_{\mu,\xi}(\ttt \phi)
$$
where
$$
\ttt N_{\mu,\xi}(\ttt \phi)=    (u_{\mu,\xi}+  \ttt\phi)^p - u_{\mu,\xi}^p -  pu_{\mu,\xi}^{p-1} \ttt\phi .
$$
It is reasonable to read locally around $q_j$ the function $\ttt \phi (x,t)$ in terms of the same scaling as that of $u_{\mu,\xi}$. We write
$$
\ttt \phi (x,t) =  \mu_j^{-\frac{n-2}2} \phi\left ( \frac {x-\xi_j} {\mu_j} , t \right )
$$
In this terms the equation for $\phi(y,t)$ becomes
\be\label{SS}
0 = \mu_j^{\frac{n+2}2} S( u_{\mu,\xi}  + \ttt  \phi   ) \ = \   \Delta_y \phi   + pU(y)^{p-1} \phi +   \mu_j^{\frac{n+2}2} S( u_{\mu,\xi})    + A[\phi]
\ee
where
\begin{eqnarray}\label{A}
A[\phi] =&&  - \mu_j^2 \pp_t \phi +  \mu_j\dot \mu_j [  \frac {n-2}2\phi  + y\cdot \nn_y\phi ]
+ \nabla \phi \cdot \dot \xi_j  \, +\,  [p(U(y) + \Theta_j ) ^{p-1} -  pU(y)^{p-1}]\,\phi \  \nonumber \\
+&&
(U(y) + \Theta_j + \phi  ) ^{p}- (U(y) + \Theta_j ) ^{p}-  p(U(y) + \Theta_j ) ^{p-1}\phi
\end{eqnarray}
and $\Theta_j$ is the function in \equ{thetaj}.  It is reasonable to assume that $\phi(y,t)$ decays in the $y$ variable and that for large $t$ the terms in $A(\phi)$ are comparatively small. Considering the largest term $E_0$ in the expansion of the error $\mu_j^{\frac{n+2}2} S( u_{\mu,\xi}) $ in the previous lemma we find
that $\phi(y,t)$ should equal at main order a solution $\phi_{0j}(y,t)$ of the elliptic equation
 \be\label{pico0}
 \Delta_y \phi_{0j} + p U^{p-1} \phi_{0j} =- \mu_{0j} E_{0j}[\mu,\dot\mu_j]  \inn \R^n , \quad \phi_0(y,t) \to 0 \ass |y|\to \infty .
\ee
At this point we recall some standard facts on a linear equation of the form
\be\label{pb}
L_0(\psi) := \Delta_y \psi + p U^{p-1} \psi = h(y) \inn \R^n , \quad \psi(y) \to 0 \ass |y|\to \infty .
\ee
It is well known that all bounded
solutions of the equation $L_0(\psi)=0$ in $\R^n$
consist of linear combinations of the functions $Z_1,\ldots, Z_{n+1}$ defined as
\begin{equation}
	\label{Zj}
	Z_i(y)  :=     {\partial U \over \partial y_i} (y), \quad i=1, \ldots , n, \quad
	Z_{n+1}(y)  :=   {n-2 \over 2} U(y) + y\cdot \nabla U(y) ,
\end{equation}
and that problem \equ{pb} is solvable for a function $h(y) = O( |y|^{-m})$, $m>2$,  if and only if
$$
\int_{\R^n} h(y)Z_i(y) \, dy\, =\, 0 \foral i=1,\ldots, n+1 .
$$
Since $n\ge 5$,  the necessary and sufficient condition for the existence of $\phi_{0j}$ solving \equ{pico0} is
 \be\label{egg}
\int_{\R^n}   E_{0j}[\mu,\dot\mu_j] (y,t)\,Z_{n+1}(y) \, dy\, =\, 0 .
\ee
We compute
\be\label{egg1}
\int_{\R^n}  E_{0j}[\mu,\dot\mu_j] (y,t)\,Z_{n+1}(y) \, dy\, =\, c_1\Big  [ \mu_j^{{n-3}} H(q_j,q_j)   -   \sum_{i\ne j}   \mu_j^{\frac {n-4}2}\mu_i^{\frac{n-2}2} G(q_i,q_j) \Big ]
 \, + \,
c_2\,  \dot \mu_j\, ,
\ee
where $c_1$ and $c_2$  are the positive constants given by
\be\label{defc1c2}
c_1 =   - p\int_{\R^n}  U^{p-1} Z_{n+1}   =  {n - 2 \over 2}  \int_{\R^n}  U^p , \quad c_2 = \int_{\R^n} |Z_{n+1}|^2  .
\ee
We observe that $c_2< +\infty$ thanks to the assumed fact  $n\ge 5$.
Relations \equ{egg} for $j=1,\ldots,k$ define a nonlinear system of ODEs for which a solution can be found as follows:
we write $$\mu_j(t) = b_j\mu_0(t)$$ and arrive at the relations

$$
  b_j^{{n-2}} H(q_j,q_j)   -   \sum_{i\ne j}  (b_i b_j)^{\frac{n-2}2} G(q_i,q_j)
 \, + \,
c_2 c_1^{-1}\,  b_j^2 \mu_0^{3-n} \dot \mu_0(t)  = 0 \foral j=1,\ldots, k,
$$
so that $\mu_0^{3-n} \dot \mu_0(t) $ should  equal a  constant, which is necessarily negative since $\mu_0$ decays to zero. This constant can be scaled out, hence it can be chosen arbitrarily to the expense of changing accordingly
the values $b_i$. We impose
\be \label{defdef}
 \dot \mu_0  =  -   \frac{  2c_1 c_2^{-1} }{n-2}\mu_0^{n-3} , \quad
\ee
which yields after a suitable translation of time,
\be\label{mu0}
\mu_0(t)\ = \   \gamma_n t^{-\frac 1{n-4} }, \quad \gamma_n = (2^{-1}(n-4)^{-1}(n-2)c_1^{-1} c_2)^{\frac 1{n-4} }
\ee
and therefore the positive constants $b_j$ (in case they exist) must solve the nonlinear system of equations
\be
  b_j^{{n-3}} H(q_j,q_j)   -   \sum_{i\ne j} b_i^{\frac{n-2}2}b_j^{\frac{n-2}2 -1} G(q_i,q_j)
= \frac { 2b_j }{n-2} \foral j=1,\ldots, k.
\label{ee}\ee
We make the following claim: System \equ{ee} has a solution (which is unique) if and only if the matrix
\be\label{gxi1}
 \mathcal G (q) = \left [ \begin{matrix} H(q_1,q_1) &  -G(q_1,q_2) &\cdots & -G(q_1,q_k) \\   -G(q_1,q_2) &  H(q_2,q_2) &  -G(q_2,q_3) \cdots & -G(q_3,q_k) \\ \vdots &  & \ddots& \vdots   \\  -G(q_1,q_k) &\cdots&  -G(q_{k-1},q_k) &  H(q_k,q_k)            \end{matrix}    \right ]
\ee
is positive definite.

This system \eqref{ee} is variational. Indeed, it is equivalent to $\nn_b I(b) =0$ where
$$
I(b) := {1\over n-2} \left[    \sum_{j=1}^k   b_j^{{n-2}}H(q_j,q_j)   -        \sum_{i\ne j} b_i^{\frac{n-2}2}b_j^{\frac{n-2}2} G(q_i,q_j) - \sum_{j=1}^k   b_j^2 \right]
$$
Writing $\Lambda_j = b_j^{\frac {n-2}2}$ the functional becomes
$$
(n-2) \, I(b) = \ttt I(\Lambda) = \sum_{j=1}^k H(q_j, q_j) \Lambda_j^2 -   \sum_{i \ne j}  G(q_i,q_j) \Lambda_i \Lambda_j  -   \sum_{j=1}^k \Lambda_j^{4 \over n-2} .
$$
Let us assume that the matrix $ \mathcal G (q)$ is positive definite. Then the functional $\ttt I(\Lambda)$ is strictly convex. It clearly has a global minimizer with all components positive.
This yields the existence of a unique critical point $b$ of $I(b)$ with positive components. Reciprocally, let us assume that $\ttt I$ has a critical point $\Lambda$.
Indeed, if $\gamma_1$ is the least eigenvalue of $\mathcal G(q)$, with eigenvector $v = (v_1 , \ldots , v_k )$, then its variational characterization implies that it can be chosen with all its components  non-negative. Since  $\pp_t \ttt I( \Lambda +t v) |_{t=0} =0  $, we obtain
$$  {4\over n-2}  \sum_{i=1}^k\Lambda_i^{{4\over n-2} -1} v_i  =  \gamma_1 \sum_{i=1}^k\ v_i \Lambda_i, $$
and hence $\gamma_1 >0$, so that $\mathcal G(q)$ is positive definite.

Since  the matrix
$
D^2 I(b)
$
is positive definite, we denote by ${2\over n-2} \bar \sigma_j $, $j=1, \ldots , k$, its positive eigenvalues and $\bar w_1 , \ldots , \bar w_k$ the corresponding eigenvectors. Thus, there exists $\bar \sigma >0$ so that
\begin{equation}
\label{D2Ib}
D^2 I (b) = {2\over n-2} \left( P^T {\mbox {diag}} (\bar \sigma_1 , \ldots , \bar \sigma_k )
P \right) , \quad {\mbox {with}} \quad \bar \sigma_j  \geq \bar \sigma,
\end{equation}
where $P$ is the $k\times k$ matrix defined by $P = \left[ \bar w_1 \, | \ldots | \, \bar w_k \right]$. This fact will be useful in the sequel.

\bigskip
In what follows we fix the function $\mu_0(t)$ and the constants $b_j$ as in \equ{mu0} and \equ{ee}. We also write
\be \label{defdef2}
\bar \mu_0 = (\mu_{01},\ldots, \mu_{0k}) = (b_1\mu_0,\ldots,b_k\mu_0)
\ee
where $\mu_0 = \mu_0 (t)$ is defined in \eqref{defdef} and $b_j$ are the positive constants defined in \eqref{ee}.
Let $\Phi_{j}$ be the unique solution of \equ{pico0} for $\mu = \bar\mu_0$.
Thus $\Phi_{j}$ solves

\be\label{aa}
  \Delta \Phi_{j} + pU(y)^{p-1} \Phi_{j} =  - \mu_{0j} E_{0j}[\bar\mu_0, \dot \mu_{0j}],\quad \Phi_{j}(y,t) \to 0 \ass y\to \infty.
\ee
From
the choice of the parameters $\mu_0$, $b_j$  we have
\begin{equation}\label{newerror}
\mu_{0j} E_{0j}[\bar\mu_0, \dot \mu_{0j}]  =  -    \gamma_j \mu_0(t)^{n-2}  \, q_0 (y)
\end{equation}
where $\gamma_j $ is a positive constant  and
$$
q_0( y ):=   p\,U^{p-1} (y)  c_2  +    c_1Z_{n+1}  (y) , \quad
$$ so that
$ \int_{\R^n}  q_0(y) \, Z_{n+1}(y) \, dy \, = \, 0 \  .$

\medskip
Problem \equ{aa} has a radially symmetric solution which we can describe from the variation of parameters formula as follows.
Let $\tilde Z_{n+1} (r) $ so that $L_0(\ttt Z_{n+1}) = 0$ with
$$ \ttt Z_{n+1}(r) \sim r^{2-n} \ass r\to 0, \quad \ttt Z_{n+1}(r) \sim 1 \ass r\to \infty, $$
and the radial solution $p_0= p_0(|y|)$ of $L_0(p_0) = q_0$ described as
$$
p_0 (r)  = cZ_{n+1}(r) \int_0^{r} \ttt Z_{n+1}(s) q_0(s) s^{n-1}\, ds -  c\ttt Z_{n+1}(r) \int_0^{r} Z_{n+1}(s) q_0(s) s^{n-1}\, ds.
$$
Observe that $p_0$ satisfies
\be \label{marshall}
p_0(|y| ) = O( |y|^{-2}  )\ass |y|\to \infty.
\ee
Then a solution $\Phi_{j}(y,t) $ to \equ{aa}  is simply given by the function
$$\Phi_{j}(y,t)  = \gamma_j \mu_0(t)^{n-2} p_0(y). $$

\medskip
\noindent
This leads us to the following corrected approximation,
 \begin{equation}
\label{bb1}
u_{\mu,\xi}^*(x,t)  := u_{\mu,\xi}(x,t)  + \ttt \Phi (x,t) ,
\end{equation}
where
$$
\ttt \Phi (x,t) :=    \sum_{j=1}^k  \mu_j^{-\frac{n-2}2}  \Phi_{j}\left ( \frac {x-\xi_j} {\mu_j}, t\right ) 
$$
The space decay of the functions $\Phi_{j}$   makes the size of $\ttt \Phi$  $\mu_0^2$-times smaller than that of $u_{\mu , \xi} $ when measured far away from the $q_j$'s. Therefore its addition to $u_{\mu,\xi}$ does not modify the size of the error there. A direct consequence of \eqref{form3} and \eqref{marshall} is that, in the region $|x-q_i | > \delta$ for any $i=1, \ldots , k$, we can describe the error function $S (u_{\mu , \xi }^* )$ as follows
\be \label{form4}
S (u_{\mu , \xi }^* ) (x,t) = \mu_0^{n-4 \over 2} \sum_{j=1}^k \dot \lambda_j f_j +
\mu_0^{n-2 \over 2} \sum_{j=1}^k \dot \xi_j \cdot \vec f_j + \mu_0^{n+2 \over 2} f
\ee
where $f_j$, $\vec f_j$, $f$ are smooth and bounded functions of $( x, \mu_0^{-1} \mu, \xi )$.

Instead, near each of the points $q_j$, the leading term   $\Phi_{j}(y,t)$ of $\ttt \Phi$ eliminates the term of size comparable to $\mu_0^{n-2}$ at the expense of creating new smaller order terms.
Using the notation $y_{j} = {x-\xi_j \over \mu_{j}}$, the new error of approximation $S(u_{\mu,\xi}^*)$ is
\begin{align}\label{nonsaprei}
S(u_{\mu,\xi}^*) &=  S( u_{\mu,\xi}) -  \sum_{j=1}^k \mu_j^{- {n+2 \over 2} } \mu_{0j} E_{0j}[\bar \mu_0 , \dot \mu_{0j}]  \\
&+\sum_{j=1}^k \mu_j^{-{n+2 \over 2}} \left\{ \mu_j^2 \partial_t  \Phi_{j} (y_j , t) -
\mu_j \dot \mu_j [ {n-2 \over 2} \Phi_{j} (y_j , t ) +\nabla \Phi_{j} (y_j , t ) \cdot y_j ]
+ \nabla \Phi_{j} (y_j , t ) \cdot \mu_j \dot \xi_j \right\} \nonumber \\
&+ (u_{\mu , \xi} + \ttt \Phi )^p - u_{\mu , \xi }^p - p \sum_{j=1}^k  \mu_j^{-{n+2 \over 2}} U(y_j )^{p-1} \Phi_{j} (y_j , t) .\nonumber
\end{align}
\medskip

Using formula \equ{SS},
we find that for a given fixed $j$ and $|x-q_j| \le  \delta  $,
 the error $S(u_{\mu,\xi}^*)$ has the form
\be\label{SSnew}
 \mu_j^{\frac{n+2}2} S( u_{\mu,\xi}^*  ) \ = \      \mu_j^{\frac{n+2}2} S( u_{\mu,\xi}) - \mu_{0j} E_{0j}[\bar \mu_0 , \dot \mu_{0j}]   + A_j (y) ,
\ee
and,
after some computation we estimate for $f$ and $g$ smooth and bounded,
\be\label{SS1}
 A_j =    {\mu_0^{n+4}}  f( \mu_0^{-1}\mu, \xi, \mu_j y)  +  \frac{\mu_0^{2n-4}}{1+ |y_j|^2} g( \mu_0^{-1}\mu, \xi,  \mu_j y), \quad y_j = {x-\xi_j \over \mu_j}.
\ee
In what follows
we set
\begin{equation}\label{gigi}
\mu(t) = \bar \mu_0(t) + \lambda(t), \quad {\mbox {with}} \quad       \lambda(t) =(\la_1(t), \ldots, \la_k(t)),
\end{equation}
and $\bar \mu_0$ defined in \eqref{defdef2}.
 In the notation of Lemma \ref{errore},  we then get from \equ{SS} and \equ{SS1},
\begin{align*}
S(u_{\xi, \mu}^*) =   \mu_j^{-\frac{n+2}2}     & \big \{  \mu_{0j} ( E_{0j}[\mu,\dot \mu_j] - E_{0j}[\bar\mu_0,\dot \mu_{0j}] ) +
 \la_j  E_{0j}[\mu,\dot \mu_j] \\
&
+  \mu_j E_{1j}[\mu,\dot \xi_j]  \ + \ \mathcal R_j \, + A_j \big \} .
\end{align*}
Let us estimate
$
E_{0j}[\mu,\dot \mu_j] - E_{0j}[\bar\mu_0,\dot \mu_{0j}]$.
Recall that
$$
 E_{0j} [\mu, \dot \mu_j] \ = \  \dot \mu_j\, Z_{n+1}(y) \ +\   pU(y)^{p-1}  \Big [ -\mu_j^{{n-3}} H(q_j,q_j)   +   \sum_{i\ne j}  \mu_j ^{\frac{n-2}2-1} \mu_i^{\frac{n-2}2} G(q_i,q_j) \Big ].
$$
We find
\begin{align*}
&E_{0j}(\bar \mu_0 + \la, b_j\dot \mu_0 + \dot \la_j  )  -    E_{0j}(\bar \mu_0, b_j\dot \mu_0 )\  \\
&=  \dot \la _j\, Z_{n+1}(y)
-  \mu_0^{n-4}  pU(y)^{p-1}\Big [ \,   \sum_{i=1}^k M_{ij}\la_i +   \sum_{i,l=1}^k f_{ijl}( \mu_0^{-1}\la)   \la_i \la_l\, \Big ] .
\end{align*}
for smooth functions $f_{ijl}$, where the $M_{ij}$ are the positive constants
$$
M_{jj} =    (n-3) b_j^{{n-4}} H(q_j,q_j)   -  \frac{n-4}2 \sum_{i\ne j}  b_j ^{\frac{n-6}2} b_i^{\frac{n-2}2} G(q_i,q_j)
$$
and for $i \ne j$
$$
M_{ij} =  - \frac {n-2}2 b_j ^{\frac{n-4}2} b_i^{\frac{n-4}2} G(q_i,q_j).
$$
We claim that  the $k\times k$ symmetric matrix $M= M_{ij}$ is positive definite.
In fact,
by definition $M = D^2 I_0 (b )$, where
$$
I_0 (b) :={1\over n-2} \left[    \sum_{j=1}^k   b_j^{{n-2}}H(q_j,q_j)   -        \sum_{i\ne j} b_i^{\frac{n-2}2}b_j^{\frac{n-2}2} G(q_i,q_j)  \right].
$$
From the definition
of the numbers $b_j$ in \eqref{ee}  we have
\begin{equation}\label{gajardo6}
M= D^2 I_0 (b) = D^2 I(b ) + {2\over n-2} I_k = {2\over n-2} \,  \left( P^T
\, {\mbox {diag}} (1+ \bar \sigma_1 , \ldots , 1+ \bar \sigma_k ) \, P \right)
\end{equation}
where we use the notation introduced in \eqref{D2Ib}. Therefore, $M$ is positive definite and there exists $\bar \sigma >0$ so that its eigenvalues
\begin{equation}\label{defsigmabar}
{\mbox {eigenvalues of }} \, M  \geq {2\over n-2} \, (1+ \bar \sigma ),
\quad {\mbox {for all}} \quad j=1, \ldots , k.
\end{equation}

\medskip
Let us now estimate $ \la_j \, E_{0j}[\mu,\dot \mu_j] $.
We have
\begin{align*}
\la_j  E_{0j}[\mu,\dot \mu_j] &=
\lambda_j  \, b_j \left[  \dot \mu_0 Z_{n+1} + p U^{p-1} \mu_0^{n-3} \left( - b_j^{n-2} H(q_j , q_j ) + \sum_{i\not= j} (b_i b_j)^{n-2 \over 2} G(q_i , q_j ) \right) \right] \\
+&\la_j  \dot \la _j\, Z_{n+1}(y)
-  \mu_0^{n-4}  pU(y)^{p-1}   \sum_{i,l=1}^k f_{ijl}( \mu_0^{-1}\la , q)   \la_i \la_l\,
\end{align*}
for smooth functions $f_{ijl}$.
\medskip
\subsection*{Total expansion of the error }
We have the validity of the following expansion for the error of approximation $S (u_{\mu , \xi }^* )$.

\begin{lemma}\label{lemaerror} Let us fix an index $j$ and consider $x$ with $|x-q_j| \le \frac 12 \min_{i\ne l} |q_i-q_l|$. Let $\mu = \bar\mu_0 + \la$ defined as in \eqref{gigi} and
assume that $|\la(t)| \le \mu_0(t)^{1+\sigma} $ for some $0<\sigma<\bar \sigma$, with $\bar \sigma$ defined in \eqref{defsigmabar}.
Then setting $x= \xi_j + \mu_j y_j $, we have that for all $t$ large, the error of approximation
 $S(u^*_{\mu , \xi})$ can be estimated as
\begin{align*}
S( u^*_{\mu, \xi} )\ & = \
    \sum_{j=1}^k \mu_j^{-\frac{n+2}2 } \Big \{ \,  \mu_{0j}\dot \la _j\, Z_{n+1}(y_j) \, - \,   \mu_{0j}\mu_0^{n-4}  pU(y_j)^{p-1} \,   \sum_{i=1}^k M_{ij}\la_i\, \, \\
&+
 \mu_j \dot \xi_j \cdot \nn U(y_j)  +  pU(y_j)^{p-1} \big [-\mu_j^{n-2} \nn_x H(q_j ,q_j)   \,  + \,  \sum_{i\ne j}  \mu_j^{\frac {n-2}2} \mu_i^{\frac {n-2}2}  \nn_x G( q_j,q_i )\, \big ] \cdot  y_j    \Big \} \,
\\
&+ \sum_{j=1}^k \mu_j^{-{n+2 \over 2}}
\lambda_j  \, b_j \left[  \dot \mu_0 Z_{n+1} + p U^{p-1} \mu_0^{n-3} \left( - b_j^{n-2} H(q_j , q_j ) + \sum_{i\not= j} (b_i b_j)^{n-2 \over 2} G(q_i , q_j ) \right) \right]\\
&+
\mu_0^{-\frac{n+2}2 } \Big [ \,   \sum_{j=1}^k   \frac{\mu_0^{n}g_j}{1+ |y_j|^2} +   \sum_{j=1}^k  \frac{\mu_0^{n-2}\vec g_j}{1+ |y_j|^4}\cdot (\xi_j-q_j)
 + \sum_{j=1}^k   \frac{\mu_0^{n-2} g_j}{1+ |y_j|^4}\lambda_j  \\
&
+       \mu_0^{n -4}   \, \sum_{i,j,l=1}^kU(y_j )^{p-1} f_{ijl}  \la_i \la_l\,
+         \, \sum_{i,j,l=1}^k {  f_{ijl}\over 1+ |y_j|^{n-2}}   \la_i \dot \la_l\,     \\ &+
\mu_0^{n+2}  f +  \mu_0^{n-1 } \sum_{i=1}^k   \dot \la_i  f_{i}   +  \mu_0^{n} \sum_{i=1}^k   \dot \xi_i  \cdot \vec f_{i} \Big ],
\end{align*}
where $\vec f_{i},\ f_{i},\ f,\  f_{ijl}$ are smooth, bounded functions of the argument $(\mu_0^{-1}\mu, \xi, x)$, and  $g_j, \vec g_j$ of $(\mu_0^{-1}\mu, \xi, y_j)$.
\end{lemma}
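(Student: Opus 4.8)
My plan is to treat the lemma as a bookkeeping synthesis of the expansions already assembled in this section, so that the argument reduces to collecting them and tracking orders in $\mu_0$ and decay in $y_j$. I would start from the intermediate identity obtained just above the statement, valid on $\{|x-q_j|\le\frac12\min_{i\ne l}|q_i-q_l|\}$,
\begin{align*}
S(u_{\xi,\mu}^*) = \mu_j^{-\frac{n+2}{2}}\Big\{ & \mu_{0j}\big(E_{0j}[\mu,\dot\mu_j] - E_{0j}[\bar\mu_0,\dot\mu_{0j}]\big) + \lambda_j E_{0j}[\mu,\dot\mu_j] \\
& + \mu_j E_{1j}[\mu,\dot\xi_j] + \mathcal R_j + A_j\Big\},
\end{align*}
which is itself \eqref{SSnew} combined with the error formula of Lemma~\ref{errore} and the splitting $\mu_j=\mu_{0j}+\lambda_j$. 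Into this I would substitute, in order: the expansion of $E_{0j}[\mu,\dot\mu_j]-E_{0j}[\bar\mu_0,\dot\mu_{0j}]$ derived above, which produces $\dot\lambda_j Z_{n+1}(y_j)$, the linear term $-\mu_0^{n-4}pU(y_j)^{p-1}\sum_i M_{ij}\lambda_i$, and quadratic-in-$\lambda$ remainders of the form $\mu_0^{n-4}U(y_j)^{p-1}f_{ijl}\lambda_i\lambda_l$; the expansion of $\lambda_j E_{0j}[\mu,\dot\mu_j]$ recorded above, contributing the line $\lambda_j b_j[\dot\mu_0 Z_{n+1}+pU^{p-1}\mu_0^{n-3}(\cdots)]$ together with $\lambda_j\dot\lambda_j Z_{n+1}$ and cubic-in-$\lambda$ pieces; the formula for $E_{1j}[\mu,\dot\xi_j]$ of Lemma~\ref{errore}, giving $\dot\xi_j\cdot\nabla U(y_j)$ and the gradient-of-Green's-function terms after absorbing the discrepancy between its powers $\mu_j^{n-1}\nabla_x H$, $\mu_j^{n/2}\mu_i^{(n-2)/2}\nabla_x G$ and the displayed $\mu_j^{n-2}\nabla_x H$, $\mu_j^{(n-2)/2}\mu_i^{(n-2)/2}\nabla_x G$; the structure \eqref{Rj} of $\mathcal R_j$; and the estimate \eqref{SS1} of $A_j$.

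The single point that needs care — and where the hypothesis $|\lambda(t)|\le\mu_0(t)^{1+\sigma}$ with $0<\sigma<\bar\sigma$ is used — is that under this bound $\mu_j=\mu_{0j}(1+O(\mu_0^\sigma))=b_j\mu_0(1+O(\mu_0^\sigma))$, so that every occurrence of $\mu_j$, $\mu_{0j}$ or $\mu_0$ may be exchanged up to constants and corrections of relative size $O(\mu_0^\sigma)$. All such corrections, together with the contributions of the bubbles with index $i\ne j$ (which near $q_j$ are smooth functions of $x$) and the subleading Taylor remainders of $\Theta_j$, get collected into the generic smooth bounded functions $f$, $f_i$, $\vec f_i$, $f_{ijl}$, $g_j$, $\vec g_j$ of the displayed arguments. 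To check that the collected terms land exactly at the orders claimed I would use $\dot\mu_0\sim\mu_0^{n-3}$ from \eqref{defdef} (so that, e.g., the piece $\mu_0^{n-1}\sum_i\dot\mu_i f_i$ of $\mathcal R_j$ separates into a contribution of order $\mu_0^{2n-4}$ and the surviving $\mu_0^{n-1}\sum_i\dot\lambda_i f_i$), together with the decay rates $U(y_j)^{p-1}=O(|y_j|^{-4})$, $Z_{n+1}(y_j)=O(|y_j|^{-(n-2)})$, $\nabla U(y_j)=O(|y_j|^{-(n-1)})$, and the fact that the quadratic Taylor remainder of $\Theta_j$ carries the prefactor $\mu_j^{n-2}pU(y_j)^{p-1}[\,\xi_j-q_j+\mu_j y_j\,]^2$, whose polynomial growth in $y_j$ is dominated by the $|y_j|^{-4}$ decay of $U^{p-1}$ and so produces the weights $1/(1+|y_j|^2)$ and $1/(1+|y_j|^4)$. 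The linear-in-$\lambda_j$ line $\lambda_j b_j[\dot\mu_0 Z_{n+1}+pU^{p-1}\mu_0^{n-3}(\cdots)]$ must be kept explicit rather than absorbed: by \eqref{ee} and \eqref{defdef} it is comparable in size to the retained term $\mu_{0j}\dot\lambda_j Z_{n+1}(y_j)$.

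The complementary region $|x-q_i|>\delta$ for all $i$ is already covered by \eqref{form4}, whose right-hand side is exactly the last displayed line $\mu_0^{-\frac{n+2}{2}}[\,\cdots+\mu_0^{n+2}f+\mu_0^{n-1}\sum_i\dot\lambda_i f_i+\mu_0^{n}\sum_i\dot\xi_i\cdot\vec f_i\,]$ of the statement, via the trivial identities $\mu_0^{(n-4)/2}\dot\lambda_j=\mu_0^{-(n+2)/2}\mu_0^{n-1}\dot\lambda_j$, $\mu_0^{(n-2)/2}\dot\xi_j=\mu_0^{-(n+2)/2}\mu_0^{n}\dot\xi_j$, $\mu_0^{(n+2)/2}f=\mu_0^{-(n+2)/2}\mu_0^{n+2}f$, so nothing extra is needed there. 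I do not anticipate a genuine obstacle: the whole difficulty is the consistent bookkeeping of powers of $\mu_0$ and of spatial decay in $y_j$ while absorbing subleading terms into the generic remainders, and verifying that the smallness hypothesis on $\lambda$ keeps every absorbed contribution strictly below the terms retained explicitly.
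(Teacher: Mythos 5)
Your proposal follows the same route the paper takes: Lemma~\ref{lemaerror} is not given a separate proof block but is stated as the culmination of the preliminary expansions — the identity $S(u^*_{\xi,\mu}) = \mu_j^{-\frac{n+2}{2}}\{\mu_{0j}(E_{0j}[\mu,\dot\mu_j]-E_{0j}[\bar\mu_0,\dot\mu_{0j}]) + \lambda_j E_{0j}[\mu,\dot\mu_j] + \mu_j E_{1j}[\mu,\dot\xi_j] + \mathcal R_j + A_j\}$ together with the expansions of $E_{0j}[\mu,\dot\mu_j]-E_{0j}[\bar\mu_0,\dot\mu_{0j}]$ and $\lambda_j E_{0j}[\mu,\dot\mu_j]$ and the structures \eqref{Rj}, \eqref{SS1}, \eqref{form4} — and your substitution and bookkeeping reproduces exactly that.

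One small correction to your reasoning: the mismatch you flag between $\mu_j E_{1j}$ and the displayed $pU(y_j)^{p-1}\bigl[-\mu_j^{n-2}\nabla_x H(q_j,q_j) + \cdots\bigr]\cdot y_j$ (with the $\mu_j$ prefactor attached only to $\dot\xi_j\cdot\nabla U$) cannot be ``absorbed'' the way you suggest: the discrepancy is a full factor of $\mu_j\sim\mu_0$, not a relative $1+O(\mu_0^\sigma)$ correction, and stuffing it into the generic remainders would change the claimed leading order. The consistent restatement \eqref{we2} makes clear that the $\mu_j$ is meant to multiply the whole bracket $\bigl[\dot\xi_j\cdot\nabla U(y_j) + pU(y_j)^{p-1}\bigl[-\mu_j^{n-2}\nabla_x H(q_j,q_j)+\cdots\bigr]\cdot y_j\bigr]$, i.e., the Lemma's display simply has a misplaced bracket; once you read that line as $\mu_j[\cdots]$, your substitution lands exactly on the paper's formula with no discrepancy left to absorb.
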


\setcounter{equation}{0}
\section{The inner-outer gluing procedure}\label{sec3new}

Let $t_0 >0$,  and consider the Problem
\be \label{equ1}
\pp_t u = \Delta u + u^p \inn \Omega \times [t_0,\infty), \qquad u = 0 \quad \quad\onn \pp\Omega \times [t_0,\infty).
\ee
Our purpose is to construct a global unbounded solution to \equ{equ1}, provided that $t_0 $ is sufficiently large. This produces a solution $u(x,t) = u(x, t-t_0) $ to Problem \equ{P}.

We solve Problem \eqref{equ1}
with $u$ of the form
\be\label{solll}
u= u_{\mu,\xi}^* + \ttt \phi,
\ee
where $\ttt \phi (x,t )$ is  a smaller term.

We construct the function $\tilde \phi$ by means of what we call
the {\em inner-outer gluing} procedure, which has been applied in other contexts, but all related to elliptic problems. This time the procedure is used to find solutions of a parabolic problem.

This procedure consists in writing
\begin{equation}\label{deftildephi}
\ttt \phi(x,t) =  \psi (x,t) +    \phi^{in} (x,t)
\quad
{\mbox {where}} \quad
 \phi^{in}(x,t) : =  \sum_{j=1}^k \eta_{j,R} (x,t) \ttt \phi_j (x,t)
 \end{equation}
with
\begin{equation}\label{martin1}
\ttt \phi_j (x,t) := \mu_{0j}^{-\frac{n-2} 2} \phi_j\left (\frac {x- \xi_j} {\mu_{0j}} , t     \right ), \quad \mu_{0j} (t) = b_j \mu_0 (t)
\end{equation}
and
\begin{equation}\label{we4}
\eta_{j,R} (x,t) = \eta \left (\frac {x- \xi_j} {R \mu_{0j}}    \right ) .
\end{equation}
Here $\eta(s)$ is a smooth cut-off function with $\eta(s) =1$ for $s<1$ and $=0$ for $s>2$. The number $R$ is a sufficiently large number, independent of $t$,  that for convenience we take as
\begin{equation}\label{defR}
R= t_0^{\rho} , \quad {\mbox {with}} \quad  0< \rho <{n-2 \over 2(n-4)}.
\end{equation}
In terms of $\ttt \phi$, Problem \equ{equ1} reads as
\be\label{equ2prima}
\pp_t \ttt \phi =  \Delta \ttt \phi + p(u_{\mu,\xi}^*)^{p-1}\ttt \phi + \ttt N(\ttt \phi ) + S(u_{\mu,\xi}^*) \inn \Omega \times [t_0,\infty) , \quad \ttt \phi = - u_{\mu,\xi}^*   \onn \pp\Omega \times [t_0,\infty),
\ee
where
$$
\ttt N_{\mu,\xi}(\ttt \phi ) = (u_{\mu,\xi}^* + \ttt \phi)^p - (u_{\mu,\xi}^*)^p - p( u_{\mu,\xi}^*)^{p-1}\ttt\phi, \quad  S(u_{\mu,\xi}^*) = -\pp_t u_{\mu,\xi}^* + \Delta u_{\mu,\xi}^* + (u_{\mu,\xi}^* ) ^p.
$$
In the notation of Lemma \ref{lemaerror} we decompose
\begin{equation} \label{we1}
S( u^*_{\mu, \xi} )\ = \   \sum_{j=1}^k S_{\mu, \xi, j}   +   S_{\mu, \xi}^{(2)}
\end{equation}
where, for $y_j = {x-\xi_j \over \mu_j}$,
\begin{align} \label{we2}
 S_{\mu, \xi,j}  &=    \mu_j^{-\frac{n+2}2 } \Big \{ \,  \mu_{0j} \left[ \dot \la _j\, Z_{n+1}(y_j) \, - \,   \mu_0^{n-4}  pU(y_j)^{p-1} \,   \sum_{i=1}^k M_{ij}\la_i\, \right]  \\
&+\, \lambda_j  \, b_j \left[  \dot \mu_0 Z_{n+1}  (y_j )+ p U^{p-1} (y_j)  \mu_0^{n-3} \left( - b_j^{n-2} H(q_j , q_j ) + \sum_{i\not= j} (b_i b_j)^{n-2 \over 2} G(q_i , q_j ) \right) \right] \nonumber \\
&+
 \mu_j \left[ \dot \xi_j \cdot \nn U(y_j)  +  pU(y_j)^{p-1} \big [-\mu_j^{n-2} \nn_x H(q_j ,q_j)   \,  + \,  \sum_{i\ne j}  \mu_j^{\frac {n-2}2} \mu_i^{\frac {n-2}2}  \nn_x G( q_j,q_i )\, \big ] \cdot  y_j    \right] \Big \}. \, \nonumber
\end{align}
\medskip
Let
\begin{equation}
\label{defVmonica}
V_{\mu,\xi} =     p  \sum_{j=1}^k ( (u_{\mu,\xi}^*)^{p-1}  - (\mu_j^{-\frac{n-2}2} U({x-\xi_j \over \mu_j}))^{p-1}) \eta_{j,R}  +  p\big (1 - \sum_{j=1}^k  \eta_{j,R} \big ) (u_{\mu,\xi}^*)^{p-1}    .
\end{equation}
A main observation we make is that $\ttt \phi $ solves Problem \equ{equ2prima} if the tuple $(\psi, \phi)$ where $\phi= (\phi_1,\ldots, \phi_k)$ solves the following system:
\begin{align} \label{equpsi}
\pp_t \psi &=  \Delta \psi +  V_{\mu,\xi} \psi  +
\sum_{j=1}^k  [2\nn\eta_{j,R} \nn_x \ttt \phi_j +  \ttt \phi_j(\Delta_x-\pp_t)\eta_{j,R} ]+   \ttt N_{\mu,\xi}( \ttt\phi  ) +  S_{\mu,\xi}^{o}   \inn \Omega \times [t_0,\infty),\nonumber\\
\psi &= -u_{\mu,\xi}^*  \onn  \pp\Omega \times [t_0,\infty),
\end{align}
where
\begin{equation}\label{we3}  S_{\mu,\xi}^{o}  = S_{\mu,\xi}^{(2)}   +  \sum_{j=1}^k (1- \eta_{j,R} ) S_{\mu,\xi,j},
\end{equation}
and for all $j=1,\ldots, k$,
\be\label{equ2}
\pp_t \ttt \phi_j =  \Delta \ttt \phi_j + pU_j^{p-1}\ttt \phi_j + pU_j^{p-1}\psi +   S_{\mu,\xi,j} \inn  B_{2R\mu_{0j}} (\xi_j)  \times [t_0,\infty).
\ee
Let us rewrite \equ{equ2} in terms of $\phi_j(y,t)$, $y\in B_{2R}(0)$, see \eqref{martin1}. The equations become, for $j=1,\ldots,k$
\begin{align} \label{equ3}
\mu_{0j}^2 \pp_t  \phi_j  =&  \Delta_y \phi_j +  pU(y)^{p-1} \phi_j
+
 \mu_{0j}^{\frac{n+2}2}  S_{\mu,\xi,j}  (\xi_j +\mu_{0j} y,t)
\\
&+  p \mu_{0j}^{\frac{n-2}2}\,  {\mu_{0j}^2 \over \mu_{j}^2 } \, U^{p-1} ( {\mu_{0j} \over \mu_j}  y) \psi(\xi_j +\mu_{0j} y,t) +  B_j[\phi_j] + B_j^0 [\phi_j]    \nonumber
\end{align}
where
\begin{equation}\label{gajardo1}
 B_j[\phi_j]:= \mu_{0j} \dot \mu_{0j} (\frac {n-2}2 \phi_j  + y\cdot \nn_y \phi_j)  + \mu_{0j} \nabla \phi_j \cdot \dot \xi_j
 \end{equation}
and
\begin{equation}\label{gajardo2}
B_j^0 [\phi_j ] := p \left[ U^{p-1} \left( {\mu_{0j} \over \mu_j} y \right) -
U^{p-1} (y) \right] \, \phi_j +p \, [ {\mu_{0j}^2 \over \mu_{j}^2 } -1 ] \, U^{p-1} \left( {\mu_{0j} \over \mu_j}  y \right)
\phi_j
\end{equation}
We call \equ{equpsi} the {\em outer problem} and  \equ{equ3} the {\em inner problem(s) }.

\medskip
We  proceed as follows. For given parameters $\la, \xi , \dot \la , \dot \xi $ and functions $\phi_j$ fixed  in a suitable range,  we solve for $\psi$ Problem \equ{equpsi}. Indeed, we can solve \eqref{equpsi} for any small and smooth initial condition $\psi_0 (x)$, in the form of a (nonlocal) operator $\psi =  \Psi(\la,\xi, \dot \la , \dot \xi, \phi)$. This is done in full details in Section \ref{outer}.

We then replace this $\psi$ in equations \equ{equ3}.
Let us explain formally how we solve \eqref{equ3}.

Recall  that the ellitpic linear operator $  L_0 (\phi ) := \Delta \phi + p U^{p-1} (y) \phi$
has an $n+1$ dimensional kernel generated by the bounded functions
$$
Z_i (y) = {\partial U \over \partial y_i}, \quad i=1, \ldots , n, \quad
Z_{n+1} (y ) = {n-2 \over 2} U (y) + \nabla U(y) \cdot y.
$$
If we consider the model problem for \equ{equ3}, in which now we do not neglect the term corresponding to time derivative, and we consider it on the whole $\R^n$
\be \label{modprob}
\mu_{0j}^2 \pp_t \phi= L_0(\phi) + E(y,t),
\ee
we observe that  $\mu_{0j}^2 \pp_t \phi= L_0(\phi) $ when $\phi$ is any linear combination of the functions
$Z_i (y)$, $i=1, \ldots , n , n+1$. This fact suggests that solvability of \eqref{modprob}
depends on whether the right hand side $E(y,t)$ does have component in the directions {\it spanned}  by the $Z_i (y)$'s. In other words,  one expects solvability for \eqref{modprob} provided that
some orthogonality conditions like
$$
\int E(y,t) Z_i (y) \, dy = 0 , \quad i=1, \ldots , n+1, \quad \forall t > t_0
$$
are fullfilled.  Since we have $k$ of these conditions, for any $j=1, \ldots , k$ in \eqref{equ3}, they take the form of a nonlinear, nonlocal system of $(n+1) k$ ODEs in the $(n+1) k$ parameter functions $\la_1 , \ldots , \la_k $ and $\xi_1 , \ldots , \xi_k$. It is at this point that
we choose the parameters $\la$ and $\xi$ (as functions of the given $\phi$) in such a way that these orthogonality (or solvability)  conditions are satisfied.  This is done in Section \ref{sec5}.

Another well known fact about the ellitpic linear operator $  L_0 (\phi ) $ is that
 $L_0$  has a positive radially symmetric bounded eigenfunction  $Z_0$
associated to the only negative eigenvalue $\la_0$ to the problem
\begin{equation}
\label{eigen0}
L_0 (\phi ) + \lambda \phi = 0 , \quad  \phi \in L^\infty(\R^n).
\end{equation}
Furthermore,  $\la_0 $ is simple and  $Z_0$
 decays like
$$Z_0 (y) \sim  |y|^{-\frac{n-1}2} e^{-\sqrt{|\la_0 |}\,  |y|}  \ass |y| \to \infty. $$
Let $e(t) := \int_{\R^n} \phi(y,t)Z_0 (y)\, dy $, the projection of $\phi(y,t)$ in the direction $Z_0(y)$.  Then, integrating equation \eqref{modprob} in $\R^n$, using that  $\mu_{0j}(t)^2 =b_j^2  t^{-\frac 2{n-4}} $ we get
\be\label{kkk}
b_j t^{-\frac 2{n-2}} \dot e(t)  - \la_0 e(t) = f(t):=  (\int_{\R^n} Z_0 (y)^2 \, dy )^{-1}\,  \int_{\R^n} E(y,t)Z_0(y)\, dy .
\ee
Hence, for some $a>0$,
$$
e(t)  =    \exp( at^{\frac {n-2} {n-4}} ) \, \Big (   e(t_0) +   \int_{t_0}^t  s^{\frac 2{n-4}}f(s) \exp(- as^{\frac {n-2} {n-4}} ) ds  \Big ).
$$
The only way in which $e(t)$ does not grow exponentially in time (and hence $\phi(y,t)$ does not growth exponentially in time) is for the specific value of initial condition
$$e(t_0)=   \int_{\R^n} \phi(y,t_0)Z_0(y)\, dy =    -\int_{t_0}^\infty  s^{\frac 2{n-4}}f(s) \exp(- as^{\frac {n-2} {n-4}} ) ds.$$
This formal argument suggests that the (small) initial condition required for  $\phi$ should lie on a certain manifold locally described as a translation of the hyperplane orthogonal to $Z_0(y)$. Since we have $k$ of these hyperplanes, for any $j=1, \ldots , k $ in \eqref{equ3}, these constraints define a {\em codimension $k$ manifold}  of initial conditions which describes those for which the expected asymptotic bubbling behavior is possible. We discuss this part in Sections \ref{seclineartheory} and \ref{final}.

\medskip
In summary, the inner-outer gluing procedure allows us to show that: for any small and smooth initial condition $\psi_0$ for Problem \eqref{equpsi}, we find a solution $\psi$  to \eqref{equpsi}, $\la $, $\xi$ and $\phi$ solutions to \eqref{equ3}, with initial condition $\phi (y, t_0)$ belonging to a $k$-codimensional space, so that
$u_{\mu , \xi} + \tilde \phi$ defined in \eqref{solll}, \eqref{martin1}, \eqref{we4} is a solution to \eqref{equ1} with the expected asymptotic bubbling behavior.

\medskip
The rest of the paper is devoted to prove rigorously what we have described so far.

\medskip\setcounter{equation}{0}
\section{Solving the outer problem }\label{outer}
Our aim is to solve first the {\em outer problem} \equ{equpsi} for a given small function $\phi$ and the considered range of parameters $\la, \xi , \dot \la , \dot \xi $, in the form of a nonlinear operator
$$
\psi(x,t)  = \Psi ( \la, \xi , \dot \la , \dot \xi , \phi )\, (x,t).
$$
Thus we consider the initial-boundary value problem
\begin{align}\label{equpsi1}
\pp_t \psi =&  \Delta \psi +  V_{\mu,\xi} \psi \ +
\sum_{j=1}^k  [2\nn\eta_{j,R} \nn_x \ttt \phi_j +  \ttt \phi_j(\Delta_x-\pp_t)\eta_{j,R} ]\
\nonumber \\
& +   \ttt N_{\mu,\xi}\Big(  \psi + \phi^{in}
  \Big) +  S_{\mu,\xi}^{o}   \inn \Omega \times [t_0,\infty), \\
\psi =& -u_{\mu,\xi}^*  \onn  \pp\Omega \times [t_0,\infty), \quad \psi(t_0,\cdot) = \psi_0 \inn \Omega, \nonumber
\end{align}
for an initial condition $\psi_0$ which is a small and smooth function  whose size will be chosen sufficiently small.

Fix $\sigma$ with
\be\label{defsigma}
0<\sigma < \bar \sigma, \quad {\mbox {where}} \quad \bar \sigma \leq \bar \sigma_j , \quad j=1, \ldots , k
\ee
where $\bar \sigma$, $\bar \sigma_j$ are defined in \eqref{D2Ib} and \eqref{defsigmabar}.
Given $h (t) : [t_0 , \infty ) \to \R^k $, and $\delta >0$, we introduce the weighted $L^\infty$ norm
\be\label{normfinal}
\| h \|_{\delta} := \| \mu_0 (t)^{- \delta} \, h (t)  \|_{L^\infty (t_0 , \infty)} .
\ee
In what follows we assume that the parameters $\la $, $\xi $, $\dot \la $, $\dot \xi$ satisfy the constraints
\be \label{rest}
\|\dot \la(t) \|_{n-3+\sigma}  + \|\dot \xi(t) \|_{n-3 +\sigma}  \le c ,
\ee
and
\be \label{rest1}
\| \la(t) \|_{1+\sigma}   +  \|\xi(t) -q \|_{1+\sigma}  \le  c ,
\ee
for some constant $c$.

We recall that $\phi^{in} (x,t) = \sum_{j=1}^k \eta_{j, R} (x,t) \ttt\phi_j (x,t) $, with
$
\ttt\phi_j(x,t)
$ defined in \eqref{martin1}. Let
\be \label{starstar}
\| \phi \|_{n-2+ \sigma , a} = \max_{j=1, \ldots , k} \|\phi_j \|_{n-2+\sigma,a}.
\ee
where  $\|\phi_j \|_{n-2+ \sigma,a}$ is the least number  $M$ for which
$$
  (1+ |y|) | \nn \phi_j(y,t) | + | \phi_j (y,t) |   \ \le \  M   \frac {\mu_0(t)^{n-2+\sigma}}{ 1+ |y|^a}  , \quad j=1,\ldots,k ,
$$
for a given $0<a<1$, and $\sigma$ fixed in \eqref{defsigma}.
We assume that $\phi=(\phi_1,\ldots , \phi_k)$ satisfies the constraints
\be \label{rest13}
\| \phi \|_{ n-2+ \sigma , a} \leq c\, t_0^{-\ve}
\ee
for some $\ve >0$, small.

\medskip
Next Proposition states the existence of a solution $\psi$ to \eqref{equpsi1}, for any initial condition $\psi_0$ small. Furthermore, it clarifies the dependence of
 $\Psi (\la , \xi , \dot \la, \dot \xi , \phi )$ in the parameters involved. This is done by  estimating, for instance,
$$\pp_\phi \Psi  (\la , \xi , \dot \la , \dot \xi , \phi) [\bar \phi] = \pp_s \Psi [\phi + s\bar \phi,\la,\xi ]\Big|_{s=0} $$
as linear operator between Banach spaces.
 For notational convenience we denote the above operator by $\partial_\phi \Psi [\bar \phi]$. Similarly we denote by  $\partial_\la \Psi [\bar \la ] $, $\partial_\xi \Psi [\bar \xi ] $, $\partial_{\dot \la} \Psi [\dot {\bar \la} ] $ , $\partial_{\dot \xi } \Psi [\dot {\bar \xi} ] $ the corresponding linear operators.

We have the validity of the following

\begin{prop}\label{probesterno}
Assume that the parameters $ \lambda$, $\xi$, $\dot \la $, $\dot \xi$ satisfy \eqref{rest}-\eqref{rest1},  and the
function $\phi=(\phi_1,\ldots , \phi_k)$ satisfies the constraint
\eqref{rest13}.  Assume furthermore that $\psi_0 \in C^2 (\bar \Omega )$ and
\be \label{psi0rest}
\| \psi_0 \|_{L^\infty (\bar \Omega) } + \| \nabla \psi_0 \|_{L^\infty (\bar \Omega) }
\leq  t_0^{-\ve}.
\ee
 Assume that the radius $R $ is given in \eqref{defR}.
Then there exists  $t_0$ large so that Problem \eqref{equpsi1}
has a unique solution $\psi= \Psi(\la , \xi , \dot \la , \dot \xi , \phi)$, and, given $\alpha $ with $0< \alpha \leq a$, there exist $\sigma $ satisfying \eqref{defsigma}, $\rho$ in \eqref{defR}  and  $\ve >0$ small so that, for $y_j = {x-\xi_j \over \mu_{0j} }$,
\be \label{leuco1}
|\psi(x,t)| \ \lesssim \   t_0^{-\ve}  \sum_{j=1}^k \frac{\mu_0^{\frac{n-2}2 +\sigma} (t) }{|y_j|^\alpha + 1 }
+  e^{-\delta (t-t_0) } \|\psi_0 \|_{L^\infty (\Omega ) },
\ee
and
\be\label{leuco11}
|\nabla_x \psi(x,t)| \ \lesssim \    t_0^{-\ve}  \,  \sum_{j=1}^k \frac{\mu_0^{\frac{n-2}2  +\sigma} (t)  \mu_j^{-1} }{|y_j|^{\alpha +1} + 1 }, \quad {\mbox {for}} \quad |y_j | <R.
\ee
Here, and in what follows, we use the symbol $\lesssim$ to say $\leq C$, for a positive constant $C$, whose value may change from line to line, and which is independent of $t$ and $t_0$.

\end{prop}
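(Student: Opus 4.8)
\medskip\noindent\textbf{Proof strategy.}\ The plan is to solve the nonlinear problem \eqref{equpsi1} by a contraction mapping argument in the weighted $L^\infty$ space dictated by the right-hand sides of \eqref{leuco1}--\eqref{leuco11}, once an a priori linear theory is available for the parabolic operator $\pp_t-\Delta-V_{\mu,\xi}$ on $\Omega\times[t_0,\infty)$. So I would first analyze
$$
\pp_t\psi=\Delta\psi+V_{\mu,\xi}\,\psi+f\inn\Omega\times[t_0,\infty),\quad\psi=g\onn\pp\Omega\times[t_0,\infty),\quad\psi(\cdot,t_0)=\psi_0,
$$
introducing a source norm $\|f\|_{**}$ adapted to the ansatz (morally $|f|\lesssim\|f\|_{**}\sum_{j=1}^k\mu_{0j}^{-2}\,\mu_0^{\frac{n-2}2+\sigma}(1+|y_j|)^{-a-2}$) and proving a bound of the form \eqref{leuco1} with $\|f\|_{**}$ and a suitable norm of $g$ in place of $t_0^{-\ve}$. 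For the part of $\psi$ carrying $f$ and $g$ this can be obtained by comparison with the supersolution $\bar\psi=M\sum_{j=1}^k(1+|y_j|^2)^{-\alpha/2}\,\mu_0(t)^{\frac{n-2}2+\sigma}$, where $M$ is a large multiple of $\|f\|_{**}$ plus a bound for $g$: since $\dot\mu_0<0$ and $0<\alpha<n-2$, one has the uniform bound $-\Delta_x\big[(1+|y_j|^2)^{-\alpha/2}\big]\gtrsim\mu_{0j}^{-2}(1+|y_j|)^{-\alpha-2}$, which dominates both the (favorable-sign, lower order) $\pp_t\bar\psi$ and --- this is the delicate point --- the potential term $V_{\mu,\xi}\bar\psi$: by \eqref{defVmonica}, on the support of $\eta_{j,R}$ the potential equals the \emph{difference} $(u^*_{\mu,\xi})^{p-1}-(\mu_j^{-\frac{n-2}2}U(y_j))^{p-1}$, gaining a factor $(\mu_0(1+|y_j|))^{n-2}$ over the bare bubble potential $\mu_{0j}^{-2}(1+|y_j|)^{-4}$, while off the cores $|V_{\mu,\xi}|\lesssim(u^*_{\mu,\xi})^{p-1}$ decays fast; the precise bookkeeping here, together with \eqref{defR}, pins down the admissible range of $\rho$. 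For the contribution of the initial datum I would instead use Duhamel's formula against the Dirichlet heat semigroup and its exponential decay $\|e^{t\Delta}\|_{\infty\to\infty}\lesssim e^{-\lambda_1 t}$, handling the feedback $V_{\mu,\xi}\psi$ by absorbing it into this Duhamel iteration. The gradient bound \eqref{leuco11} then follows from interior parabolic estimates in the rescaled variable $y=(x-\xi_j)/\mu_{0j}$.

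Next I would estimate, for $\psi$ in the ball $\mathcal B=\{\|\psi\|\le K\,t_0^{-\ve}\}$ of the target space, the $\|\cdot\|_{**}$-norm of each source in \eqref{equpsi1}. The gluing terms $\sum_j[\,2\nn\eta_{j,R}\nn_x\ttt\phi_j+\ttt\phi_j(\Delta_x-\pp_t)\eta_{j,R}\,]$ are supported in $\{R\mu_{0j}\le|x-\xi_j|\le2R\mu_{0j}\}$, where $|\nn\eta_{j,R}|\lesssim(R\mu_{0j})^{-1}$, $|(\Delta_x-\pp_t)\eta_{j,R}|\lesssim(R\mu_{0j})^{-2}$ and, by \eqref{rest13} and \eqref{martin1}, $|\ttt\phi_j|+R\mu_{0j}|\nn_x\ttt\phi_j|\lesssim t_0^{-\ve}\mu_0^{\frac{n-2}2+\sigma}R^{-a}$, so the choice \eqref{defR} of $R$ makes the total of order $t_0^{-\ve}$. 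The term $S^{o}_{\mu,\xi}$ of \eqref{we3} is handled through Lemma \ref{lemaerror}: its part $S^{(2)}_{\mu,\xi}$ collects the genuinely small pieces (the $O(\mu_0^{(n+2)/2})$ background error, the terms quadratic in $\la$, the pieces carrying extra powers of $\mu_0$), while $(1-\eta_{j,R})S_{\mu,\xi,j}$ vanishes for $|y_j|<R$ and hence gains powers of $R$; together with the constraints \eqref{rest}--\eqref{rest1} these close. The nonlinearity $\ttt N_{\mu,\xi}(\psi+\phi^{in})$ is quadratic in its argument, hence $O(t_0^{-2\ve})$ on $\mathcal B$. Finally the lateral datum is small: on $\pp\Omega$, $U_{\mu_j,\xi_j}(x)-\mu_j^{\frac{n-2}2}H(x,q_j)=\mu_j^{\frac{n-2}2}[\,\Gamma(x-\xi_j)-\Gamma(x-q_j)\,]+O(\mu_j^{\frac{n+2}2})=O(\mu_0^{\frac{n-2}2+1+\sigma})$ by \eqref{rest1}, $\ttt\Phi$ contributes $O(\mu_0^{\frac{n-2}2+2})$, and $\psi_0$ is controlled by \eqref{psi0rest}.

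Writing $\mathcal L$ for the solution operator of the linear problem, \eqref{equpsi1} becomes the fixed point equation $\psi=\mathcal T(\psi):=\mathcal L\big[\text{(gluing terms)}+\ttt N_{\mu,\xi}(\psi+\phi^{in})+S^{o}_{\mu,\xi},\;-u^*_{\mu,\xi},\;\psi_0\big]$. By the preceding step $\mathcal T$ maps $\mathcal B$ into itself for $t_0$ large and, since $\ttt N_{\mu,\xi}$ is quadratic, $\|\mathcal T(\psi_1)-\mathcal T(\psi_2)\|\le\tfrac12\|\psi_1-\psi_2\|$ on $\mathcal B$; Banach's theorem yields the unique $\psi=\Psi(\la,\xi,\dot\la,\dot\xi,\phi)$ satisfying \eqref{leuco1}--\eqref{leuco11}. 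The bounds on $\pp_\phi\Psi,\pp_\la\Psi,\pp_\xi\Psi,\pp_{\dot\la}\Psi,\pp_{\dot\xi}\Psi$ needed in later sections follow by differentiating this identity: each such derivative solves a linear problem of the same type whose source is assembled from the already-controlled derivatives of $V_{\mu,\xi}$, $\eta_{j,R}$, $S^{o}_{\mu,\xi}$, $u^*_{\mu,\xi}$ and $\phi^{in}$, so the linear theory together with the contraction property closes them; this also fixes admissible $\sigma$ as in \eqref{defsigma}, $\rho$ as in \eqref{defR} and $\ve>0$.

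I expect the principal difficulty to be the linear step: choosing the right source norm $\|\cdot\|_{**}$ and running a comparison/representation argument that reproduces simultaneously the inner annular decay $|y_j|^{-\alpha}$, the concentration scale $\mu_0^{\frac{n-2}2+\sigma}$ and the exponentially decaying initial layer, while absorbing $V_{\mu,\xi}$ --- which is itself unbounded near the bubble cores, so that what must be controlled is $V_{\mu,\xi}\psi$ rather than $V_{\mu,\xi}$ --- and then upgrading to the rescaled gradient bound \eqref{leuco11}. The remainder is the routine but lengthy verification that every term of the expansion of Lemma \ref{lemaerror} entering $S^{o}_{\mu,\xi}$, as well as the quadratic nonlinearity and the lateral datum, land in $\|\cdot\|_{**}$ with a gain of a positive power of $t_0^{-1}$.
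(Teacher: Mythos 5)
Your proposal is correct and follows essentially the same route as the paper's own proof: a linear comparison (barrier) theory for $\pp_t-\Delta-V_{\mu,\xi}$ in the weighted norm $\|f\|_{*,\beta,2+\alpha}$ (Lemma~\ref{psistar1}), followed by a contraction mapping in the ball $\{\|\psi\|_{*,\beta,\alpha}\lesssim t_0^{-\ve}\}$ using estimates \eqref{uno}--\eqref{tre} on the gluing terms, the error $S^o_{\mu,\xi}$, and the nonlinearity, and then interior parabolic estimates in the rescaled variable for the gradient bound. The only cosmetic deviation is that you handle the initial and lateral data by Duhamel against the Dirichlet heat semigroup, whereas the paper absorbs them with the elliptic supersolution $v$ of $\Delta v+1=0$; your explicit discussion of the cancellation making $V_{\mu,\xi}$ small near the cores is, if anything, more detailed than the paper's assertion $|V_{\mu,\xi}|\lesssim\sum_j\mu_j^{-2}R^{-2}(1+|y_j|)^{-2}$.
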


\medskip
\noindent
To prove this result,  we shall estimate, for  given functions $f(x,t)$, $g(x,t)$, $h(x)$ the unique solution  of the linear problem
\be\label{hh1}
\pp_t \psi =  \Delta \psi + V_{\mu , \xi} \psi \ + f(x,t)  \inn \Omega \times [t_0,\infty),
\ee
$$
\qquad \psi = g   \onn  \pp\Omega \times [t_0,\infty), \quad \psi(\cdot, t_0) = h,
$$
where the function $V_{\mu , \xi} $ is defined in \eqref{defVmonica}.
We assume that for $\alpha, \beta>0$ we have
\be
|f(x,t) | \ \leq \     M  \sum_{j=1}^k   \frac {\mu_j^{-2} t ^{-\beta}} { 1+ |y_j|^{2+\alpha} }   , \quad y_j =\frac{|x-\xi_j|}{\mu_j}
\label{gg}\ee
and denote by $\|f\|_{*,\beta,2+\alpha}$ the least $M$ for which \equ{gg} holds.

\medskip
\noindent
We have the following result.

\begin{lemma}\label{psistar1}
Assume that $\|f\|_{*,\beta,2+\alpha}<+\infty$ for some $\beta,\alpha>0$, $0<\alpha <1$. Assume also that $\| h \|_{L^\infty (\Omega ) } < +\infty$ and
$\| s^\beta g(x,s) \|_{L^\infty (\partial \Omega \times (t_0 , \infty) )} <+\infty$.
Let $\phi = \psi[f,g,h] $ be the unique solution of Problem $\equ{hh1}$.
There exists  $\delta = \delta(\Omega) >0$ small so that,  for all $(x,t)$,
\be \label{potoo0}
  |\psi(x,t)| \ \lesssim\    \|f\|_{*,\beta,2+\alpha } \left( \sum_{j=1}^k \frac { t^{-\beta}  } { 1+ \left |y_j\right |^{\alpha}   } \right)  +  e^{-\delta (t-t_0) } \|h\|_{L^\infty (\Omega ) }  +    t^{-\beta} \|s^\beta g\|_{L^\infty (\pp \Omega \times (t_0,\infty )) } ,
\ee
where $y_j = {x-\xi_j \over \mu_j} $. Moreover, we have the following local estimate on the gradient
\be \label{potoo1}
  |\nn_x\psi(x,t)| \ \lesssim\   \|f\|_{*,\beta,2+\alpha}  \sum_{j=1}^k \frac {\mu_j^{-1} t^{-\beta}  } { 1+ \left |y_j\right |^{\alpha+1}   } , \quad {\mbox {for}} \quad |y_j |\leq R.
\ee
\end{lemma}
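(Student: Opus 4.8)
\medskip
\noindent\textbf{Proof strategy.}
Existence and uniqueness of $\psi=\psi[f,g,h]$ is standard linear parabolic theory, since for each fixed $t$ the potential $V_{\mu,\xi}$ is bounded and continuous; the content of the Lemma is the a priori bound, which I would prove by the comparison principle with explicit barriers. The structural fact that makes this work is the \emph{smallness of the potential}: although $V_{\mu,\xi}$ is of size $\mu_j^{-2}$ at the concentration scale, the definition \eqref{defVmonica} subtracts off exactly the non-perturbative part $p\,(\mu_j^{-(n-2)/2}U(\tfrac{x-\xi_j}{\mu_j}))^{p-1}$ wherever $\eta_{j,R}=1$, leaving there only $p[(u_{\mu,\xi}^*)^{p-1}-(\mu_j^{-(n-2)/2}U(y_j))^{p-1}]$, which is small because $u_{\mu,\xi}^*$ is a small \emph{relative} perturbation of the $j$-th bubble in that region; on the transition annuli $\{R<|y_j|<2R\}$ and on $\{1-\sum_j\eta_{j,R}=1\}$ the approximation $u_{\mu,\xi}^*$ is itself uniformly small. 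The first thing I would do is make this precise: using the constraints \eqref{rest}--\eqref{rest1} on the parameters and the choice \eqref{defR} of $R=t_0^{\rho}$, one obtains
\[
|V_{\mu,\xi}(x,t)|\ \lesssim\ \sigma_0(t_0)\sum_{j=1}^k \frac{\mu_j^{-2}}{1+|y_j|^{2+\alpha}},\qquad \sigma_0(t_0)\to 0 \text{ as } t_0\to\infty .
\]
I expect this estimate to be the main obstacle: it requires splitting into the core $\{|y_j|\lesssim 1\}$, the annulus $\{1\lesssim|y_j|\lesssim 2R\}$ and the exterior, and tracking the interplay between $\mu_0(t)\to0$, $R=t_0^{\rho}$ and the dimension $n$ — this is precisely where the admissible ranges of $\rho$ in \eqref{defR} and of $\sigma$ in \eqref{defsigma} enter.

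Granting this, by linearity it suffices to dominate $\psi$ by a single barrier $\bar\Psi\ge 0$, which I would take of the form
\[
\bar\Psi = C\Bigl(\|f\|_{*,\beta,2+\alpha}\sum_{j=1}^k \frac{t^{-\beta}}{1+|y_j|^\alpha}\ +\ \|s^\beta g\|_{L^\infty(\pp\Omega\times(t_0,\infty))}\,t^{-\beta}\,\Phi_g\ +\ \|h\|_{L^\infty(\Omega)}\,e^{-\delta(t-t_0)}\,\Phi_h\Bigr),
\]
where $\Phi_g=\varphi_1+A\sum_j(1+|y_j|^\alpha)^{-1}$ and $\Phi_h=\varphi_0+A\sum_j(1+|y_j|^\alpha)^{-1}$; here $\varphi_1$ is a fixed strictly superharmonic function with $\varphi_1\ge1$ on $\bar\Omega$, $\varphi_0$ is a fixed positive function with $-\Delta\varphi_0\ge\lambda\varphi_0$ for some $\lambda>\delta$ and $\varphi_0\ge1$ on $\bar\Omega$, $\delta=\delta(\Omega)\in(0,\lambda_1(\Omega))$, and $C,A,t_0$ are to be fixed. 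The verification rests on three points. First, for $0<\alpha<1$ the radial profile $(1+|y|^\alpha)^{-1}$ is a supersolution of $-\Delta$ with $-\Delta_y(1+|y|^\alpha)^{-1}\gtrsim(1+|y|)^{-2-\alpha}$, so after rescaling $-\Delta_x[(1+|y_j|^\alpha)^{-1}]\gtrsim \mu_j^{-2}(1+|y_j|)^{-2-\alpha}$, which is exactly the weight in \eqref{gg}, and hence the first piece of $\bar\Psi$ absorbs $|f|$ once $C$ is large. Second, the $\pp_t$ derivatives hitting $t^{-\beta}$, $e^{-\delta(t-t_0)}$ and $y_j=(x-\xi_j(t))/\mu_j(t)$ produce terms bounded by $(t^{-1}+|\dot\mu_j|\mu_j^{-1}+|\dot\xi_j|\mu_j^{-1})\,\bar\Psi$ times a bounded factor, which by the ansatz $\mu_j\sim t^{-1/(n-4)}$ together with \eqref{rest} is $o(1)$ times the above $-\Delta$ gain. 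Third, by the potential estimate, $|V_{\mu,\xi}\bar\Psi|\le \sigma_0(t_0)$ times the $-\Delta$ gain on the localized pieces and $\le\sigma_0(t_0)\bar\Psi$ on the bulk. Combining these, for $t_0$ large one checks $\pp_t\bar\Psi-\Delta\bar\Psi-V_{\mu,\xi}\bar\Psi\ge|f|$ in $\Omega\times(t_0,\infty)$, while $\bar\Psi\ge|g|$ on $\pp\Omega\times(t_0,\infty)$ and $\bar\Psi(\cdot,t_0)\ge|h|$ — the $g$- and $h$-pieces of $\bar\Psi$ being themselves supersolutions of $\pp_t-\Delta-V_{\mu,\xi}$ with the correct boundary and initial values, the localized term $A\sum_j(1+|y_j|^\alpha)^{-1}$ being present precisely to neutralize $V_{\mu,\xi}$ near the $\xi_j$. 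Applying the comparison principle for $\pp_t-\Delta-V_{\mu,\xi}$ (bounded potential) on $[t_0,T]$ for every $T$ — legitimate because $\psi$ is bounded — and letting $T\to\infty$ yields $|\psi|\le\bar\Psi$, which is \eqref{potoo0}.

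For the gradient bound \eqref{potoo1} I would use interior parabolic regularity at the bubble scale. Fixing $j$ and a point with $|y_j|\le R$, rescale $\ttt\psi(z,\tau)=\psi(\xi_j+\mu_j z,\tau)$; on a parabolic cylinder of unit radius in $z$ and length $\sim\mu_j^2$ in $\tau$ the equation reads $\mu_j^2\pp_\tau\ttt\psi=\Delta_z\ttt\psi+\mu_j^2 V_{\mu,\xi}\ttt\psi+\mu_j^2 f$, with bounded coefficients, with right-hand side controlled by \eqref{potoo0} and the potential estimate, and with the time weight $t^{-\beta}$ essentially frozen over such a short cylinder. Standard $L^p$/Schauder gradient estimates then give $\|\nabla_z\ttt\psi\|_\infty\lesssim\|\ttt\psi\|_{L^\infty}$ plus the bound on the right-hand side, and undoing the rescaling produces the factor $\mu_j^{-1}$ and the weight $(1+|y_j|^{\alpha+1})^{-1}$ of \eqref{potoo1}.
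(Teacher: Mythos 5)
Your proposal is correct and follows essentially the same route as the paper: a comparison/barrier argument built on the smallness of $V_{\mu,\xi}$ (the paper records it as $|V_{\mu,\xi}|\lesssim R^{-2}\sum_j\mu_j^{-2}(1+|y_j|^2)^{-1}$, equivalent to your $\sigma_0(t_0)$-formulation after multiplying by the barrier's $(1+|y_j|^\alpha)^{-1}$ decay), with a radial profile $\sim(1+|y|^\alpha)^{-1}$ as the $f$-barrier and a rescaling-plus-interior-parabolic-regularity step for the gradient bound. The only small differences are cosmetic: you fold the potential correction directly into unified $g$- and $h$-barriers instead of first subtracting the pure heat solution, and you use $(1+|y|^\alpha)^{-1}$ itself (which, being non-$C^1$ at the origin for $\alpha<1$, should be replaced by the smooth profile $p$ solving $\Delta p+4q=0$ as the paper does) rather than a smoothed version.
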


\proof
We consider first the solution $\psi_0[g,h]$  to the heat equation
\be\label{hh11}
\pp_t \psi_0 =  \Delta \psi_0   \inn \Omega \times [t_0,\infty), \quad
\psi_0 = g   \onn  \pp\Omega \times [t_0,\infty), \quad \psi_0(\cdot, t_0) = h.
\ee
We let $ v(x)$ be the bounded solution of $\Delta v + 1 =0$ in $\Omega$  with $v =1$ on $\Omega$. Then $v\ge 1$ in $\Omega$ and the function
$$
 \bar \psi (x,t) =    \left (  e^{-\delta (t-t_0) } \|h\|_\infty  +    t^{-\beta} \|s^\beta g\|_{L^\infty (\pp \Omega \times (t_0,\infty )) } \right ) v(x)
$$
is a supersolution of \equ{hh11} provided that $\delta = \delta(\Omega) >0$ is fixed sufficiently small.  Then we have $|\psi_0| \le \bar \psi$.
The proof of \eqref{potoo0} and \eqref{potoo1} is thus reduced to the the case $g=0$, $h=0$.

\medskip
Let $q(|z|) = \frac 1{1+ |z|^{2+\alpha}} $ and let $p(|z|)$ be the radial positive solution of
$$
\Delta p   + 4q = 0\inn \R^n
, \quad
{\mbox {given by}} \quad
p(r) =   4\int_r^\infty \frac {d\rho}{ \rho^{n-1}} \int_0^\rho   q(s)\rho^{n-1} d\rho.
$$
Then
$
p(z) \sim  \frac 1{1+ |z|^{\alpha } }\inn \R^n.
$
If $\delta$ is sufficiently small we  have
$$
\Delta p   +  \frac {\delta} { 1 + |z|^2}p +   2q  \le  0\inn \R^n.
$$
It follows that
$
\bar p  (x) := \sum_{j=1}^k p\left (\frac{x-\xi_j}{\mu_j}\right )
$
satisfies, for a possibly smaller $\delta$,
$$
\Delta_x \bar p   +  \left( \sum_{j=1}^k  \,  \mu_j^{-2 }  \,  \frac {\delta} { 1 +  \left |\frac{x-\xi_j}{\mu_j}\right |^2} \  \right)  \bar p \, + \  \frac 32\, \bar q \ \le\  0\inn \R^n, \quad
{\mbox {where}} \quad
\bar q := \sum_{j=1}^k  \frac 1{\mu_j^2 }  q\left (\frac{x-\xi_j}{\mu_j}\right ).
$$
Observe now that
$$
|V_{\mu,\xi}|  \lesssim \sum_{j=1}^k  \mu_j^{-2 }  \,  \frac { R^{-2}}{ 1 +  \left |y_j\right |^2},
$$
as a direct consequence of the definition of the function $V_{\mu , \xi}$ given in \eqref{defVmonica}.
From the above estimates, we obtain that for a given number $\beta >0$ we have that
the function
$\bar \psi (x, t) =  2t^\beta \bar p $
is a positive supersolution of
$$
\pp_t \bar \psi \ge \Delta \bar \psi + V_{\mu , \xi}  \bar
\psi +  {t^\beta} \bar q
$$
for $t>t_0$, where $t_0$ is fixed large enough.
By parabolic comparison we then get
$$
|\psi(x,t)| \ \lesssim\   t^{-\beta}\|f\|_{*,\beta,2+\alpha } \sum_{j=1}^k \frac { 1  } { 1+ \left |y_j\right |^{ \alpha  } }.
$$
Hence,  \equ{potoo0} has been established.
To get the gradient estimate in \equ{potoo1} we scale around each point $\xi_j$
letting
$$
 \psi(x, t ) :=   \ttt \psi\left ( \frac {x- \xi_j}{\mu_j} , \tau(t) \right )
$$
where
$\dot \tau (t) =  \mu_j(t)^{-2}$,
namely
$
\tau(t) \sim     t^{ \frac {n-2}{n-4}} ,
$ as $t \to \infty$.
We choose and fix $t_0$ so that $\tau(t_0) \ge 2$.

Then $\ttt \psi$ satisfies  for  $|z| \le \delta \mu_0^{-1}$, with sufficiently small $\delta$,
$$
\pp_\tau \ttt \psi =         \Delta_z \ttt \psi  +   a(z,t) \cdot \nn_z \ttt \psi   +  b(z,t) \ttt\psi + \ttt f(z,\tau)
$$
where
$$
\ttt f(z,\tau)  =   \mu_j^2 f( \xi_j + \mu_j  z , t(\tau) ) ,
$$
and the uniformly small coefficients $a(z,t)$ and $b(z,t)$ are given by
$$
a(z,t) := [ \mu_j {\dot \mu_j}  z  + {\dot \xi_j } {\mu_j} ], \quad  b(z,t) =  V_{\mu,\xi} ( \xi_j + \mu_j z )   =  O( R^{-4}) (1+|z|)^{-4}   .
$$
Our assumption in $f$ implies  that in this region
$$
|\ttt f(z,\tau)|\ \lesssim \   t(\tau)^{-\beta} \frac { \|f\|_{*,\beta,2+\alpha }}{ 1 + |z|^{2+\alpha} }
$$
while we have already  established that
$$
|\ttt \psi (z,\tau)| \lesssim  t(\tau)^{-\beta} \frac { \|f\|_{*,\beta,2+\alpha }}{ 1 + |z|^{\alpha} } .
$$
Let us now fix $0<\eta <1$. By standard parabolic estimates we get that for $\tau_1 \ge \tau(t_0) + 2$,
\begin{align}
 [ \nn_z  \ttt \psi( \tau_1  , \cdot ) ]_{\eta, B_{10} (0)}   +    \|  \nn_z  \ttt \psi( \tau_1 , \cdot )  \|_ {L^\infty ( B_{10} (0))}\ & \lesssim\
  \|  \ttt \psi  \|_ {L^\infty ( B_{20} (0)) \times (\tau_1-1 , \tau_1 )}  + \|  \ttt f  \|_ {L^\infty ( B_{20} (0)) \times (\tau_1-1 , \tau_1 )} \nonumber\\
& \lesssim
t(\tau_1 -1)^{-\beta} \|f \|_{*,\beta,2+\alpha} \lesssim  t(\tau_1)^{-\beta} \|f\|_{*,\beta,2+\alpha} .
\nonumber
\end{align}
provided that $\tau_1 \ge 2$.  Translating this estimate to the original variables $(x,t)$ we find  that for any $t \ge   c_n t_0$, for a suitable constant $c_n$,
 \be
 ( R\mu_j )^{1+\eta} [ \nn_x \psi( t  , \cdot ) ]_{\eta, B_{ 10 R\la_j } (\xi_j)}   +     R\mu_j \|  \nn_x\psi( t , \cdot )  \|_ {L^\infty (B_{ 10 R\la_j } (\xi_j))}\ \lesssim\
 t^{-\beta} \|f\|_{*,\beta,2+\alpha} . \label{kk}\ee
The use of a similar parabolic estimate up to the initial condition $0$ at $t_0$ for $\psi$ yields the validity of  estimate \equ{kk}  and hence of \equ{potoo1} for any $t\ge t_0$. The proof  is complete. \qed

\medskip
\noindent
We have now the elements to give the

\medskip
\noindent
\begin{proof}[Proof of Proposition \ref{probesterno}]. \ \  We prove the existence of $\psi$ and the validity of \eqref{leuco1}.

Lemma \ref{psistar1} defines a linear operator $T$ that to any set of functions $f ( x,t) $, $g (x,t)$, and $h  (x)$ associates the solution $\psi = T(f,g,h)$ to  Problem \eqref{hh1}.

Define $\psi_1 (x,t) = T(0, -u_{\mu , \xi }^* , \psi_0)$. Using \eqref{bb1}, \eqref{ansatz0} and \eqref{marshall}, we get that, for any $x \in \partial \Omega$
$$
| u_{\mu , \xi}^* (x,t) | \lesssim \mu_0^{n+2 \over 2} (t).
$$
Thus Lemma \ref{psistar1} gives that
$$
|\psi_1 (x,t) |  \lesssim e^{-\delta (t-t_0 ) } \, \| \psi_0 \|_{L^\infty (\Omega ) } +
t^{-\beta} \mu_0 (t_0)^{2-\sigma}, \quad \beta=  {n-2 \over 2(n-4)} + {\sigma \over n-4}.
$$
The function $\psi + \psi^1$ is thus a solution to \eqref{equpsi1} if $\psi$ is a fixed point for the operator
\be\label{primacontra}
{\mathcal A} (\psi ) :=
 T (f(\psi) , 0 , 0), \quad
\ee
where $T$ is again the operator defined by Lemma \ref{psistar1} and
$$
 f(\psi ) =\sum_{j=1}^k  [2\nn\eta_{j,R} \nn_x \ttt \phi_j +  \ttt \phi_j(\Delta_x-\pp_t)\eta_{j,R} ]\
 +   \ttt N_{\mu,\xi}\Big(  \psi + \psi_1 + \sum_{j=1}^k \ttt\phi_j
  \Big)  +  S_{\mu,\xi}^{o}.
$$
We shall show the existence of a fixed point $\psi$ for ${\mathcal A}$ using the Contraction Mapping Theorem, for functions $\psi$  so that
$$
\| \psi \|_{* , \beta , \alpha} \quad {\mbox {is bounded}}, \quad {\mbox {with}} \quad
\beta=  {n-2 \over 2(n-4)} +{\sigma \over n-4}
$$
(see \eqref{gg}).
To do so, we establish first the following estimates: given $\alpha \in (0 , 1)$, there exists $\ve >0$ so that
\be \label{uno}
\left|  S_{\mu,\xi}^{o}  (x,t) \right| \lesssim  t_0^{-\ve}  \,  \sum_{j=1}^k {\mu_j^{-2} \mu_0^{{n-2 \over 2} +\sigma } (t) \over (1+ |y_j |^{2+\alpha} )},
\ee
\be \label{due}
\left| \sum_{j=1}^k  [2\nn\eta_{j,R} \nn_x \ttt \phi_j +  \ttt \phi_j(\Delta_x-\pp_t)\eta_{j,R} ]\
\right| \lesssim  \, \| \phi \|_{ n-2+ \sigma , a}  \sum_{j=1}^k {\mu_j^{-2} \mu_0^{{n-2 \over 2}+\sigma }  (t) \over (1+ |y_j |^{2+\alpha} )},
\ee
and
\be \label{tre}
\left|  \ttt N_{\mu,\xi}\Big(  \psi + \psi_1 + \phi^{in}
  \Big)  \right| \lesssim  t_0^{-\ve} \,  \left\{
\begin{array}{r}
\left(  \| \phi \|_{n-2+ \sigma , a}^2  + \| \psi \|^2_{* , \beta ,\alpha} \right) \sum_{j=1}^k {\mu_j^{-2} \mu_0^{{n-2 \over 2}+\sigma}  (t)  \over (1+ |y_j |^{2+\alpha} )}  \quad {\mbox {if}} \quad n=5,6 \\
\left(  \| \phi \|_{n-2+\sigma , a}^p  + \| \psi \|^p_{* , \beta , \alpha} \right) \sum_{j=1}^k {\mu_j^{-2} \mu_0^{{n-2 \over 2}+\sigma }  (t) \over (1+ |y_j |^{2+\alpha} )}  \quad \quad \quad  {\mbox {if}} \quad n\geq 7.
\end{array}
\right.
\ee

\medskip
\noindent
{\it Proof of \eqref{uno}}. \ \  We recall (see \eqref{we3}) that
$$
S_{\mu,\xi}^{o}  = S_{\mu,\xi}^{(2)}   +  \sum_{j=1}^k (1- \eta_{j,R} ) S_{\mu,\xi,j},
$$
where $S_{\mu,\xi}^{(2)}$ and $S_{\mu,\xi,j} $ are given by \eqref{we1} and \eqref{we2}, while $\eta_{j,R}$ is the cut-off function introduced in \eqref{we4}.

Using estimate \eqref{form4} and the result in Lemma \ref{lemaerror}, we see that, in the region  $|x-q_j | >\delta$, for some $\delta >0$ small, for all $j$, $S_{\mu , \xi}^{o}$ can be estimated as follows
$$
\left| S_{\mu , \xi}^{o} (x,y) \right| \lesssim \mu_0^{n-2 \over 2} \left[ \mu_0^2 + \mu_0^{n-4} \right] \lesssim \mu_0 (t_0)^{\min (n-4 , 2) -\alpha-\sigma }
\sum_{j=1}^k {\mu_j^{-2} \mu_0^{{n-2 \over 2}  +\sigma } \over (1+ |y_j|^{2+\alpha} )}.
$$
Let us now fix $j \in \{ 1, \ldots , k \}$, and consider the region $|x-q_j | < \delta$, for some $\delta >0$ small. Consequence of Lemma \ref{lemaerror} is that
\begin{align*}
\left| S_{\mu,\xi}^{(2)} (x,t ) \right| &\lesssim \mu_0^{-{n+2 \over 2}} {\mu_0^n \over (1+ |y_j |^2 ) }  \lesssim \mu_0(t_0 )^{2-\alpha-\sigma } \, \sum_{j=1}^k {\mu_j^{-2} \mu_0^{{n-2 \over 2}+\sigma} \over (1+ |y_j|^{2+\alpha} )}.
\end{align*}
Observe now that $(1-\eta_{j,R} ) \not= 0 $ if $|x-\xi_j | > \mu_0 R$. Thus, in $|x-q_j | < \delta$, we see that
\begin{align*}
\left| (1- \eta_{j,R} ) S_{\mu,\xi,j} (x,t) \right| \lesssim
 \left( \mu_0^{n-4-\alpha } + \mu_0^{4-\alpha} \right)
 \, \sum_{j=1}^k {\mu_j^{-2} \mu_0^{{n-2 \over 2}+\sigma } \over (1+ |y_j|^{2+\alpha} )},
\end{align*}
where we use the bounds \equ{rest}-\eqref{rest1} on $\lambda$ and $\xi$. Collecting the above estimates, we get the existence of $\ve$ so that \eqref{uno} is valid.

\medskip
\noindent
{\it Proof of \eqref{uno}}. \ \  Let us fix $j$, and consider first $\nabla \eta_{j,R} \cdot  \nabla \tilde \phi_j$. We recall that $\ttt \phi_j (x,t) := \mu_{0j}^{-\frac{n-2} 2} \phi_j\left (\frac {x- \xi_j} {\mu_{0j}} , t     \right )$, see \eqref{martin1}. Since we are assuming \eqref{rest13}, we have
\begin{align*}
\left| \left( \nabla \eta_{j,R} \cdot  \nabla \tilde \phi_j \right) (x,t) \right|
& \lesssim {\eta' (| {x-\xi_j \over R \mu_{0j} } | ) \over R\mu_{0j} } \, \mu_0^{-{n-2 \over 2}} {|\nabla_y \phi_j | \over \mu_0} \lesssim
  {\eta' (| {x-\xi_j \over R \mu_{0j} } | ) \over R\mu_{0}^2 } \, { \mu_0^{{n-2 \over 2} +\sigma}  \over (1+ |y_j |^{1+ a})} \, \| \phi \|_{n-2+ \sigma , a} \\
&\lesssim  \| \phi \|_{ n-2+ \sigma , a}  \sum_{j=1}^k {\mu_j^{-2} \mu_0^{{n-2 \over 2}+\sigma } \over (1+ |y_j|^{2+a} )}
\end{align*}
where we have used that $(1+ |y_j | ) \sim R$, $y_j = {x-\xi_j \over \mu_{0j}} $, in the region where $\eta' (| {x-\xi_j \over R \mu_{0j} } | )  \not=0$. Since we are assuming $a> \alpha$, we get
$$
\left| \sum_{j=1}^k  \left( \nabla \eta_{j,R} \nabla \ttt \phi_j  \right) (x,t)\right|
\lesssim  \,  \| \phi \|_{n-2+\sigma , a} \,  \sum_{j=1}^k {\mu_j^{-2} \mu_0^{{n-2 \over 2}+\sigma } \over
(1+|y_j|^{2+\alpha} )}.
$$
Let us now consider the term $\ttt \phi_j(\Delta_x-\pp_t)\eta_{j,R} $. A direct computation gives
\begin{align}\label{nina1}
\left| \left( \ttt \phi_j(\Delta_x-\pp_t)\eta_{j,R}  \right) (x,t) \right|& \lesssim
\left|  {\eta'' (| {x-\xi_j \over R \mu_{0j} } | ) \over R^2 \mu_{0}^2 }  \right|  \mu_0^{-{n-2 \over 2}} \left| \phi_j \right| \\
&+ \left|  \eta' (| {x-\xi_j \over R \mu_{0j} } | ) \left( { |x-\xi_j| \over R^2 \mu_0^2 } \dot \mu_0 - \mu_0^{-1} \dot \xi_j \right) \right| \mu_0^{-{n-2 \over 2}} \, \left| \phi_j \right|  .\nonumber
\end{align}
We start with the first term in the right hand side of \eqref{nina1}. Using again the definition of $\ttt \phi_j$ and the assumption \eqref{rest13}, we get
\begin{align*}
\left|  {\eta'' (| {x-\xi_j \over R \mu_{0j} } | ) \over R^2 \mu_{0}^2 }  \right|  \mu_0^{-{n-2 \over 2}} \left| \phi_j \right| & \lesssim
\left|  {\eta'' (| {x-\xi_j \over R \mu_{0j} } | ) \over R^2 \mu_{0}^2 }  \right|  { \mu_0^{{n-2 \over 2}+\sigma} \over (1+ |y_j|^a )} \| \phi \|_{ \sigma , a} \\
&\lesssim   \| \phi \|_{n-2+ \sigma , a}  \sum_{j=1}^k {\mu_j^{-2} \mu_0^{{n-2 \over 2}+\sigma } \over (1+ |y_j|^{2+a} )},
\end{align*}
where we have used that $(1+ |y_j | ) \sim R$, $y_j = {x-\xi_j \over \mu_{0j}} $, in the region where $\eta'' (| {x-\xi_j \over R \mu_{0j} } | )  \not=0$. The second term in the right hand side in \eqref{nina1} can be estimated as follows
\begin{align*}
\left|  \eta' (| {x-\xi_j \over R \mu_{0j} } | ) \left( { |x-\xi_j| \over R^2 \mu_0^2 } \dot \mu_0 - \mu_0^{-1} \dot \xi_j \right) \right| & \mu_0^{-{n-2 \over 2}} \, \left| \phi_j \right| \lesssim{  |  \eta' (| {x-\xi_j \over R \mu_{0j} } | ) | \over \mu_0^2 R^2}  \,
\left( \mu_0^{n-2} R + \mu_0^{n-2+\sigma} R^2 \right)  \mu_0^{-{n-2 \over 2}} \, \left| \phi_j \right| \\
&\lesssim  \, \| \phi \|_{n-2+ \sigma , a}  \sum_{j=1}^k {\mu_j^{-2} \mu_0^{{n-2 \over 2}+\sigma } \over (1+ |y_j|^{2+a} )}.
\end{align*}
Collecting the above estimates, we get the validity of \eqref{due}.

\medskip
\noindent
{\it Proof of \eqref{tre}}. \ \  Since $p-2 \geq 0$ only for dimensions $5$ and $6$, we get
$$
\ttt N_{\mu,\xi}\Big(  \psi + \psi_1+  \sum_{j=1}^k \eta_{j,R} \ttt\phi_j
  \Big) \lesssim \left\{
\begin{array}{r}
(u_{\mu , \xi}^*  )^{p-2} \left[ |\psi  |^2 +|\psi_1 |^2  + \sum |  \eta_{j,R} \ttt \phi_j |^2   \right] \quad {\mbox {if}} \quad n=5,6 \\
\sum_{j} |  \eta_{j,R}\ttt \phi_j |^p  + |\psi |^p + |\psi_1|^p   \quad \quad \quad  {\mbox {if}} \quad n\geq 7.
\end{array}
\right.
$$
Consider then $n=5$, $6$. Thus, we have
\begin{align*}
\left| (u_{\mu , \xi}^*  )^{p-2} ( \eta_{j,R} \ttt \phi_j)^2 \right| &\lesssim
{\mu_0^{n-2 +2\sigma} \over (1+ |y_j|^{2a} )} \, \| \phi \|_{n-2+\sigma , a}^2
 \lesssim \mu_0^{{n-2 \over 2} + \sigma} R^2 \mu_0^2 \| \phi \|_{n-2+ \sigma , a}^2
\sum_{j=1}^k  {\mu_j^{-2} \mu_0^{{n-2 \over 2}+\sigma } \over (1+ |y_j|^{2+a} )},
\end{align*}
and also
\begin{align*}
\left| (u_{\mu , \xi}^*  )^{p-2} \psi^2 \right| &\lesssim
{t^{-2 \beta} \over (1+ |y_j|^{2\alpha } )} \, \| \psi \|_{* , \beta , 2+\alpha}^2
 \lesssim t^{-\beta}  \| \psi \|_{* , \beta , \alpha}^2
\sum_{j=1}^k  {\mu_j^{-2} t^{-\beta} \over (1+ |y_j|^{2+\alpha} )}.
\end{align*}
Consider now $n\geq 7$.
\begin{align*}
\left|   \eta_{j,R} \ttt \phi_j \right|^p &\lesssim
{\mu_0^{( {n-2 \over 2}  +\sigma) p } \over (1+ |y_j|^{ap } )} \, \| \phi \|_{n-2+ \sigma , a}^p
 \lesssim \mu_0^{2 + (p-1) \sigma} \| \phi \|_{ \sigma , a}^p  \, R^2 \mu_0^2
\sum_{j=1}^k  {\mu_j^{-2} \mu_0^{{n-2 \over 2}+\sigma } \over (1+ |y_j|^{2+a} )},
\end{align*}
and also
\begin{align*}
\left|  \psi \right|^p  &\lesssim
{t^{-p \beta} \over (1+ |y_j|^{p\alpha } )} \, \| \psi \|_{* , \beta , 2+\alpha}^p
 \lesssim t^{- (p-1) \beta}  \| \psi \|_{* , \beta , \alpha}^p
\sum_{j=1}^k  {\mu_j^{-2} t^{-\beta} \over (1+ |y_j|^{2+\alpha} )},
\end{align*}
and an analogous estimation holds for $\psi_1$. We thus get the validity of \eqref{tre}.

\medskip
\noindent
\medskip
\noindent
Define
$$
{\mathcal B} = \{ \psi  \, : \, \| \psi \|_{*, \beta , \alpha} \leq  M \,   t_0^{-\ve} \},
$$
where $\beta = {n-2 \over 2(n-4)}+{\sigma \over n-4}$, and $\alpha $, $\ve$ are fixed above. Moreover, $M$ is a positive large constant, independent of $t$ and $t_0$.

For any $\psi \in {\mathcal B}$, we have that ${\mathcal A} (\psi ) \in {\mathcal B}$, as follows directly from \equ{uno}, \eqref{due} and \equ{tre}, provided $M$ is chosen large enough.  Furthermore, for any $\psi_1 $, $\psi_2 \in {\mathcal B}$, we get
$$
\| {\mathcal A} (\psi^{(1)} ) - {\mathcal A} (\psi^{(2)} ) \|_{*, \beta , \alpha} \leq C
\| \psi^{(1)}  -  \psi^{(2)}  \|_{* , \beta , \alpha} ,
$$
where $C$ is a constant, whose definition depends on $t_0$,
and it is less than $1$, if $t_0 $ is chosen large.
Indeed, observe that
$$
{\mathcal A} (\psi^{(1)} ) - {\mathcal A} (\psi^{(2)} ) =
 T \left(\ttt N_{\mu , \xi} (\psi^{(1)} + \psi_1 + \phi^{in} ) - \ttt N_{\mu , \xi} (\psi^{(2)} + \psi_1 + \phi^{in} )  , 0 , 0 \right),
$$
see \eqref{primacontra}, where
$$
\ttt N_{\mu , \xi} (\psi^{(1)} + \psi_1 + \phi^{in} ) - \ttt N_{\mu , \xi} (\psi^{(2)} + \psi_1 + \phi^{in} ) =
\left( u_{\mu , \xi}^* + \psi^{(1)} + \psi_1 + \phi^{in} \right)^p - \left( u_{\mu , \xi}^* + \psi^{(2)} + \psi_1  + \phi^{in} \right)^p
$$
$$
- p ( u_{\mu , \xi}^* )^{p-1} \left[
  \psi^{(1)} - \psi^{(2)} \right].
$$
Thus we get
\begin{align*}
&\left| \ttt N_{\mu , \xi} (\psi^{(1)} + \psi_1  + \phi^{in} )  - \ttt N_{\mu , \xi} (
\psi^{(2)} + \psi_1  + \phi^{in} )
\right|
&\lesssim \left\{
\begin{array}{r}
(u_{\mu , \xi}^*  )^{p-2}   |\phi^{in} | \, |\psi^{(1)} - \psi^{(2)}|  \quad {\mbox {if}} \quad n=5,6 \\
\sum_{j} |\phi^{in} |^{p-1}  |\psi^{(1)} - \psi^{(2)} |   \quad \quad \quad  {\mbox {if}} \quad n\geq 7.
\end{array}
\right.
\end{align*}
In dimensions $n=5,6$, we get
\begin{align*}
& \left| \ttt N_{\mu , \xi} (\psi^{(1)} + \psi_1  + \phi^{in} )  - \ttt N_{\mu , \xi} (\psi^{(2)} + \psi_1  + \phi^{in} )
\right| \\
&\lesssim \| \phi \|_{n-2+ \sigma , a} \, \| \psi^{(1)} - \psi^{(2)} \|_{ \beta , \alpha} \, R^{2-a} \mu_0(t_0)^{{n+2 \over 2} + \sigma} \sum_{j=1}^k {\mu_j^{-2} t^{-\beta} \over (1+ |y_j|^{2+\alpha})} ,
\end{align*}
while in dimensions $n\geq 7$ we have
\begin{align*}
 \left| \ttt N_{\mu , \xi} (\psi^{(1)} + \psi_1+ \phi^{in} )  - \ttt N_{\mu , \xi} (
\psi^{(2)} + \psi_1 + \phi^{in} )
\right|  &
\lesssim \| \phi \|_{ n-2+ \sigma , a}^{p-1} \, \| \psi^{(1)} - \psi^{(2)} \|_{ \beta , \alpha} \, \times \\
& \times R^{2-a (p-1)} \mu_0(t_0)^{4 + (p-1)\sigma} \sum_{j=1}^k {\mu_j^{-2} t^{-\beta} \over (1+ |y_j|^{2+\alpha})} ,
\end{align*}
There exists a choice of  $R$ of the form \eqref{defR} for which we get that
$$
\| {\mathcal A} (\psi^{(1)} ) - {\mathcal A} (\psi^{(2)} ) \|_{*,\beta, \alpha} \leq C
\| \psi^{(1)}  -  \psi^{(2)}  \|_{*, \beta, \alpha}
$$
where $C <1$, provided $t_0 $ is large enough.
Using the above estimates, we readily see that if $t_0$  is fixed sufficiently large,  then the operator ${\mathcal A}$  defines a contraction map in the set ${\mathcal B}$.
The existence result of the Proposition \ref{probesterno} thus follows, as well as the validity of \eqref{leuco1}. Estimate \eqref{leuco11} follows directly from \eqref{potoo1}.

\end{proof}

\begin{remark}\label{rmk1}
Proposition \ref{probesterno} defines the solution to Problem \eqref{equpsi1} as a function of the initial condition $\psi_0$, in the form of a linear operator $ \psi = \bar \Psi [\psi_0]$, from a small neighborhood of $0$ in the Banach space $L^\infty (\Omega)$ equipped with the $C^1$ norm
$\| \psi_0 \|_{L^\infty (\Omega) }+ \| \nabla \psi_0 \|_{L^\infty (\Omega )}$ into the Banach space of functions $\psi \in L^\infty (\Omega)$ equipped with the norm
$ \| \psi \|_{* , \beta , \alpha}$ , defined in \eqref{gg}, with $\beta= {n-2 +2\sigma\over 2 (n-4)   }$, and $0<\alpha <1$.

A closer look to the proof of Proposition \ref{probesterno}, and the Implicit function Theorem give that $\bar \Psi [\psi_0]$ is a diffeomorphism, and that
$$
\| \bar \Psi [\psi_0^1 ] - \bar \Psi [\psi_0^2] \|_{* , \beta , \alpha} \leq c \left[
\| \psi_0^1 - \psi_0^2\|_{L^\infty (\Omega )} +
\| \nabla \psi_0^1 - \nabla \psi_0^2\|_{L^\infty (\Omega )} \right]
$$

\end{remark}

\medskip
The function $\psi = \Psi (\la , \xi , \dot \la, \dot \xi , \phi )$ solution to Problem \eqref{equpsi1} depends also on the parameter functions $\la $, $\xi $, $\dot \la $, $\dot \xi $, and $\psi$. Next Proposition
clarifies this dependence. This is done by  estimating, for instance,
$$\pp_\phi \Psi  (\la , \xi , \dot \la , \dot \xi , \phi) [\bar \phi] = \pp_s \Psi [\phi + s\bar \phi,\la,\xi ]\Big|_{s=0} $$
as linear operator between Banach spaces.

We have the validity of the following

\begin{prop}\label{probesterno1}
Assume the validity of the hypothesis in Proposition \ref{probesterno},
Then,  $\Psi$ depends smoothly on $\la$, $\xi$, $\dot \la$ , $\dot \xi $, $\phi$, and we have, for $y_j = {x-\xi_j \over \mu_{0j} }$,
\be \label{leuco2}
|\pp_\la \Psi (\la , \xi , \dot \la , \dot \xi , \phi) [\bar \la ] (x,t)| \ \lesssim \    \, \mu_0^{1+ \sigma} (t) \, \| \bar \la (t) \|_{1+\sigma}  \, \left(  \sum_{j=1}^k \frac{\mu_0^{\frac{n-4}2   } (t) }{|y_j|^\alpha + 1 } \right) ,
\ee
\be  \label{leuco2new}
|\pp_\xi  \Psi  (\la , \xi , \dot \la , \dot \xi , \phi) [\bar \xi ] (x,t)| \lesssim \      \, \, \mu_0^{1+ \sigma} (t) \,  \| \bar \xi (t) \|_{1+\sigma}   \,  \left( \sum_{j=1}^k \frac{\mu_0^{\frac{n-4}2   } (t) }{|y_j|^\alpha + 1 }  \right), \quad
\ee
\be \label{leuco22}
 |\pp_{\dot \la}  \Psi (\la , \xi , \dot \la , \dot \xi , \phi) [\dot {\bar \la } ] (x,t)| \ \lesssim \,   t_0^{-\ve}  \, \mu_0^{n-3+ \sigma} (t)  \,  \|\dot {\bar \la } (t) \|_{n-3+\sigma} \, \left( \sum_{j=1}^k \frac{  \,  \mu_0^{-{n-4 \over 2} +\sigma } (t) }{|y_j|^\alpha + 1 }  \right), \quad
\ee
\be \label{leuco22new}
|\pp_{\dot  \xi } \Psi (\la , \xi , \dot \la , \dot \xi , \phi) [ \dot {\bar \xi } ] (x,t)|  \lesssim  t_0^{-\ve}  \, \mu_0^{n-3+ \sigma} (t) \,  \|\dot {\bar \xi } (t) \|_{n-3+\sigma}  \, \left( \sum_{j=1}^k \frac{  \,  \mu_0^{-{n-4 \over 2} +\sigma} (t) }{|y_j|^\alpha + 1 } \right) ,
\ee
and
\be \label{leuco3}
|\pp_\phi \Psi (\la , \xi , \dot \la , \dot \xi , \phi) [\bar \phi](x,t) | \ \lesssim \ t_0^{-\ve} \,  \|\bar \phi\|_{n-2+\sigma , a}  \,  \left( \sum_{j=1}^k \frac{\mu_0^{\frac{n-2}2  +\sigma} }{|y_j|^\alpha + 1 } \right) .
\ee
We refer to \eqref{normfinal}, \eqref{rest}, \eqref{rest1}, \eqref{starstar}, \eqref{rest13}
for the definitions of the norms.
\end{prop}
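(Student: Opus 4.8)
The plan is to obtain \eqref{leuco2}--\eqref{leuco3} by differentiating the fixed point construction of Proposition \ref{probesterno} and re-running the estimates \eqref{uno}--\eqref{tre} with one ingredient differentiated. Recall from that proof that $\Psi(\la,\xi,\dot\la,\dot\xi,\phi) = \psi_1 + \psi_\star$, where $\psi_1 = T_{\mu,\xi}(0,-u_{\mu,\xi}^*,\psi_0)$, $T_{\mu,\xi}$ is the solution operator of \eqref{hh1} given by Lemma \ref{psistar1}, and $\psi_\star$ is the unique fixed point, in the ball ${\mathcal B}$, of $\psi\mapsto {\mathcal A}(\psi) = T_{\mu,\xi}\big(f(\psi),0,0\big)$, with $f(\psi)$ the sum of the bridge terms $\sum_j[2\nn\eta_{j,R}\cdot\nn_x\ttt\phi_j + \ttt\phi_j(\Delta_x-\pp_t)\eta_{j,R}]$, the nonlinearity $\ttt N_{\mu,\xi}(\psi+\psi_1+\phi^{in})$ and the outer error $S_{\mu,\xi}^{o}$. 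Since $V_{\mu,\xi}$ (the coefficient of $T_{\mu,\xi}$, see \eqref{defVmonica}) and $f$ depend smoothly on $(\la,\xi,\dot\la,\dot\xi,\phi)$, and $\psi\mapsto {\mathcal A}(\psi)$ is, by the contraction estimate in the proof of Proposition \ref{probesterno}, a contraction in $\|\cdot\|_{*,\beta,\alpha}$ (see \eqref{gg}, $\beta=\frac{n-2}{2(n-4)}+\frac{\sigma}{n-4}$) uniformly in the parameters for $t_0$ large, I would invoke the uniform contraction principle to conclude that $\Psi$ is smooth in the parameters.

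Next I would write down the linear problem satisfied by the derivative. Write $W = \pp\psi_\star$ for the derivative of $\psi_\star$ in any one of the directions $\bar\la,\bar\xi,\dot{\bar\la},\dot{\bar\xi},\bar\phi$; differentiating $\psi_\star = T_{\mu,\xi}(f(\psi_\star),0,0)$ shows that $W$ solves
\begin{align*}
\pp_t W \ =\ & \Delta W + V_{\mu,\xi}W + p\big[(u_{\mu,\xi}^* + \Psi + \phi^{in})^{p-1} - (u_{\mu,\xi}^*)^{p-1}\big]W + {\mathcal E} \inn \Omega\times[t_0,\infty),\\
W \ =\ & 0 \onn \pp\Omega\times[t_0,\infty),\qquad W(\cdot,t_0)=0,
\end{align*}
where ${\mathcal E}$ is the derivative of $f$ in the chosen direction plus $(\pp V_{\mu,\xi})\psi_\star$. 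The zeroth order coefficient of $W$ is of the same order as the Lipschitz coefficient already estimated in the contraction step of Proposition \ref{probesterno}, hence $\lesssim t_0^{-\ve}$ in the relevant norm, so $W\mapsto T_{\mu,\xi}(p[(u_{\mu,\xi}^*+\Psi+\phi^{in})^{p-1}-(u_{\mu,\xi}^*)^{p-1}]W,0,0)$ is again a contraction; combining this with Lemma \ref{psistar1} I would get $\|W\|_{*,\beta',\alpha}\lesssim\|{\mathcal E}\|_{*,\beta',2+\alpha}$ for a suitable $\beta'>0$. The corresponding derivative $\pp\psi_1$ (which is $0$ in the $\phi$-, $\dot\la$- and $\dot\xi$-directions, and otherwise satisfies a linear problem with only the small source $(\pp V_{\mu,\xi})\psi_1$ and boundary datum $O(\mu_0^{\frac{n+2}2})$) is negligible compared with the claimed bounds, directly by Lemma \ref{psistar1}.

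The bulk of the work is then estimating $\|{\mathcal E}\|_{*,\beta',2+\alpha}$ in each direction, which I would do by mimicking the proofs of \eqref{uno}--\eqref{tre}. In the $\phi$-direction $\pp_\phi V_{\mu,\xi}=0$, the derivative of the bridge terms is the same expression with $\phi$ replaced by $\bar\phi$ (supported in $R\le|y_j|\le 2R$, so the computation of \eqref{due} gains a power $R^{-(a-\alpha)}=t_0^{-\rho(a-\alpha)}$), and the derivative of $\ttt N_{\mu,\xi}(\psi_\star+\psi_1+\phi^{in})$ is $p[(u_{\mu,\xi}^*+\Psi+\phi^{in})^{p-1}-(u_{\mu,\xi}^*)^{p-1}]\sum_j\eta_{j,R}\bar\phi_j$, carrying the smallness of \eqref{tre}; Lemma \ref{psistar1} with $\beta'=\beta$ then gives \eqref{leuco3}. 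In the $\dot\la$- and $\dot\xi$-directions only $S_{\mu,\xi}^{o}$ depends on the parameter, linearly, through the summands $\dot\la_jZ_{n+1}(y_j)$ and $\mu_j\dot\xi_j\cdot\nn U(y_j)$ of $S_{\mu,\xi,j}$ in \eqref{we2}; the resulting source, e.g. $\sum_j(1-\eta_{j,R})\mu_j^{-\frac{n+2}2}\mu_{0j}\dot{\bar\la}_jZ_{n+1}(y_j)$, is supported in $|y_j|>R$ where $Z_{n+1}(y_j)=O(|y_j|^{2-n})$, so converting this decay to $1/(1+|y_j|^{2+\alpha})$ (as in the proof of \eqref{uno}) gains $R^{-(n-4-\alpha)}=t_0^{-\ve}$, and using $|\dot{\bar\la}(t)|\le\mu_0(t)^{n-3+\sigma}\|\dot{\bar\la}\|_{n-3+\sigma}$ and matching powers of $\mu_0$ with $t^{-\beta'}$ gives \eqref{leuco22} and \eqref{leuco22new}. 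In the $\la$- and $\xi$-directions the leading contribution comes from differentiating the $\la$-linear (resp.\ $\xi$-dependent) summands of $S_{\mu,\xi,j}$, namely $\mu_{0j}\mu_0^{n-4}pU^{p-1}\sum_iM_{ij}\la_i$ and $\la_jb_j[\dot\mu_0Z_{n+1}+\cdots]$: the derivative has the same structure as $S_{\mu,\xi}^{o}$ itself with the factor $|\la|\lesssim\mu_0^{1+\sigma}$ replaced by $|\bar\la(t)|\le\mu_0^{1+\sigma}\|\bar\la\|_{1+\sigma}$, and since $\mu_0^{1+\sigma}\mu_0^{\frac{n-4}2}=\mu_0^{\frac{n-2}2+\sigma}\asymp t^{-\beta}$ this yields \eqref{leuco2} and \eqref{leuco2new}. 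All remaining pieces of ${\mathcal E}$ — the chain rule through $y_j=(x-\xi_j)/\mu_j$ and through $U(\cdot/\mu_j)$ and $\Theta_j$, the terms $(\pp_\la V_{\mu,\xi})\psi_\star$ and $(\pp_\xi V_{\mu,\xi})\psi_\star$, the derivatives of $\eta_{j,R}$ (which, built from the fixed $\mu_{0j}$, sees only $\xi_j$), and $\pp\psi_1$ — each carry an extra factor $\mu_0^{-1}|\bar\la|\lesssim\mu_0^{\sigma}\|\bar\la\|_{1+\sigma}$ or $R^{-1}$ relative to the leading term and are thus strictly subordinate. Finally I would deduce the $C^1$ bounds from the local gradient estimate \eqref{potoo1}, exactly as \eqref{leuco11} is obtained.

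The hard part will be the bookkeeping of the implicit dependence on $(\la,\xi)$ through the moving centers $\xi_j$ and the scales $\mu_j=\mu_{0j}+\la_j$: one has to differentiate $V_{\mu,\xi}$, the scaled variables $y_j$, and the profiles $U(\cdot/\mu_j)$ and $\Theta_j$, and — before applying Lemma \ref{psistar1} — verify that the weighted norms $\|\cdot\|_{*,\beta,2+\alpha}$ and $\|\cdot\|_{n-2+\sigma,a}$ attached to two nearby choices of $\xi$ are equivalent, so that $W$ and ${\mathcal E}$ belong to a common space. The point that makes this work is that the pairing of each differentiated term with its direction $\bar\la,\bar\xi,\dot{\bar\la},\dot{\bar\xi},\bar\phi$ precisely compensates the loss incurred by the chain rule, so the source of the linearized problem stays at the scale handled by Lemma \ref{psistar1} and the whole argument closes uniformly for $t_0$ large.
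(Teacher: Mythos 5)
Your plan is substantively the same as the paper's proof: the paper likewise differentiates the fixed-point equation for $\Psi$ with respect to each parameter, obtains a linear problem for the derivative $Z$ with the zeroth-order coefficient $p[(u^*_{\mu,\xi}+\phi^{in})^{p-1}-(u^*_{\mu,\xi})^{p-1}]$ treated as a small contraction via Lemma~\ref{psistar1}, and then estimates the explicit sources (including the $\pp_\la V_{\mu,\xi}\,\psi$ term, the differentiated $S^o_{\mu,\xi}$, and the boundary contribution from $\pp_\la u^*_{\mu,\xi}$) direction by direction. The only cosmetic difference is in how the derivative is split — the paper uses $\pp\Psi=Z_1+Z$ with $Z_1=T(0,-\pp u^*_{\mu,\xi},0)$ having no interior source, while you split as $\pp\psi_1+\pp\psi_\star$ — but this does not affect the argument.
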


\begin{proof}

 For notational convenience we denote the  operator $\pp_\phi \Psi  (\la , \xi , \dot \la , \dot \xi , \phi) [\bar \phi]$ by $\partial_\phi \Psi [\bar \phi]$. Similarly we denote by  $\partial_\la \Psi [\bar \la ] $, $\partial_\xi \Psi [\bar \xi ] $, $\partial_{\dot \la} \Psi [\dot {\bar \la} ] $ , $\partial_{\dot \xi } \Psi [\dot {\bar \xi} ] $ the other linear operators.

We divide the proof on three steps, for the proof of \eqref{leuco2}-\eqref{leuco2new},
for the proof of \eqref{leuco22}=\eqref{leuco22new}, and finally for the proof of \eqref{leuco3}.

\bigskip
\noindent
{\bf Step 1.} \ \ Proof of estimate \eqref{leuco2}.

Let us fix $j=1$. For any $\la_1$ satisfying \eqref{rest}, the function $\Psi [\la_1] $ is a solution to  Problem \eqref{equpsi1}. Differentiating Problem \eqref{equpsi1} with respect to $\la_1$ we get a non linear problem, and the Implicit Function Theorems ensures that  the solution is given by
$\partial_{\la_1} \Psi [\bar \la_1] (x,t)$. If we decompose  $ \partial_{\la_1} \Psi [\bar \la_1] (x,t)= Z_1 + Z$, with $Z_1= T(0,- (\partial_{\lambda_1 } u_{\mu,\xi}^*) [\bar \la_1]  , 0) $, where $T$ is the operator defined in
Lemma \ref{psistar1}, then  $Z$ is the solution to
\begin{align}\label{huff}
\pp_t Z =&  \Delta Z+  V_{\mu,\xi} Z  \ + (\partial_{\lambda_1 }  V_{\mu,\xi} )   [\bar \la_1]  \psi
 + \partial_{\lambda_1}  \left[ \ttt N_{\mu,\xi}\Big(  \psi + \phi^{in}
  \Big)  \right]  [\bar \la_1] +  \partial_{\lambda_1} \left( S_{\mu,\xi}^{o} \right)  [\bar \la_1]   \inn \Omega \times [t_0,\infty), \nonumber \\
\psi =& 0 \onn  \pp\Omega \times [t_0,\infty), \quad \psi(t_0,\cdot) = 0 \inn \Omega.
\end{align}
Observe that $\sum_{j=1}^k  [2\nn\eta_{j,R} \nn_x \ttt \phi_j +  \ttt \phi_j(\Delta_x-\pp_t)\eta_{j,R} ]$ is independent of $\lambda_1$, as follows from its very definition.

Using \eqref{bb1}, \eqref{ansatz0} and \eqref{marshall}, we get that, for any $x \in \partial \Omega$
$$
| \partial_{\lambda_1 } u_{\mu , \xi}^* (x,t)  [\bar \la_1]  | \lesssim \mu_0^{n \over 2} (t) |\la_1 (t) |
$$
Thus Lemma \ref{psistar1} gives that
\begin{equation}\label{huff4}
|Z_1 (x,t) |  \lesssim \mu_0(t)^{{n-4 \over 2}+\sigma } \,
  \mu_0 (t_0)^{2-\sigma} \, |\la_1 (t) |  .
\end{equation}
To treat Problem \eqref{huff}, we start with the observation that
\begin{align*}
\partial_{\lambda_1}  \left[ \ttt N_{\mu,\xi}\Big(  \psi + \phi^{in}
  \Big)  \right]   [\bar \la_1]  &= p \left[ (u_{\mu , \xi}^* + \phi^{in}  )^{p-1} -  (u_{\mu , \xi}^*  )^{p-1} \right] \, \left( Z + Z_1 \right) \\
&+ \left[ (u_{\mu , \xi}^* + \phi^{in}  )^{p-1} -  (u_{\mu , \xi}^*  )^{p-1}
- (p-1) (u_{\mu,\xi}^*)^{p-2} \phi^{in}    \right] \, \partial_{\lambda_1} u_{\mu , \xi}^*  [\bar \la_1] .
\end{align*}
Thus, $Z$ is a fixed point for the operator
\begin{equation}\label{huff5}
{\mathcal A}_1 (Z) = T (f+ p \left[ (u_{\mu , \xi}^* +  \phi^{in} )^{p-1} -  (u_{\mu , \xi}^*  )^{p-1} \right] Z, 0 , 0)
\end{equation}
where
\begin{align*}
f&=\partial_{\lambda_1} \left( S_{\mu,\xi}^{o} \right)  [\bar \la_1]
+   (\partial_{\lambda_1 }  V_{\mu,\xi} )  [\bar \la_1]  \psi
+p \left[ (u_{\mu , \xi}^* + \phi^{in} )^{p-1} -  (u_{\mu , \xi}^*  )^{p-1} \right] \, Z_1  \\
&+ \left[ (u_{\mu , \xi}^* + \phi^{in} )^{p-1} -  (u_{\mu , \xi}^*  )^{p-1}
- (p-1) (u_{\mu,\xi}^*)^{p-2} \phi^{in}   \right] \, \partial_{\lambda_1} u_{\mu , \xi}^*  [\bar \la_1] .
\end{align*}
We claim that, there exists $\ve >0$ so that
\begin{equation}
\label{huff1}
\left| f (x,t) \right| \lesssim \left( t_0^{-\ve} +\| \phi \|_{ n-2+ \sigma , a} \right)\, \mu_0^{1+\sigma } (t) \,  \|\la_1 \|_{1+\sigma}  \sum_{j=1}^k { \mu_j^{-2} \mu_0^{{n-4 \over 2}+\sigma }  \over (1+ |y_j|^{2+\alpha} )}  .
\end{equation}
We start with the estimate of $ \partial_{\lambda_1} \left( S_{\mu,\xi}^{o} \right)  [\bar \la_1]  $. In order to estimate this term, we shall understand $\partial_{\lambda_1} S(u_{\mu , \xi}^* )  [\bar \la_1]  $
A direct differentiation in $\lambda_1$ of $S(u_{\mu , \xi}^* )$ given in \eqref{SSnew},
implies that
in the region $|x-q_i | > \delta$ for any $i=1, \ldots , k$, we can describe  $\partial_{\lambda_1} S (u_{\mu , \xi }^* )$ as follows
\be \label{form444}
\partial_{\lambda_1} S (u_{\mu , \xi }^* )  [\bar \la_1]   (x,t) =
 \mu_0^{n \over 2} \, f ( x, \mu_0^{-1} \mu, \xi )\,  \la_1 (t)
\ee
where  $f$ is a smooth and bounded function of $( x, \mu_0^{-1} \mu, \xi )$. This is consequence of \eqref{marshall}  and the assumptions on the parameters $\la$, $\xi$ given in \eqref{rest}-\eqref{rest1}.

Let us now fix $j$, and consider the region $|x-q_j| \leq \delta$. Using again \eqref{SSnew}, and a direct differentiation, we get that
$$
\partial_{\lambda_1} S (u_{\mu , \xi }^* ) [\bar \la_1] (x,t)  =\partial_{\lambda_1} S (u_{\mu , \xi })   [\bar \la_1] (x,t) \, \left( 1+ \mu_0 f ( x, t, \mu_0^{-1} \mu, \xi ) \right)
 $$
where $f$ is a smooth and bounded function. Now, a direct and explicit differentiation with respect to $\lambda_1$ in expressions \eqref{form1} and \eqref{form2} gives that,
\begin{align*}
\partial_{\lambda_1} S (u_{\mu , \xi })& [\bar \la_1] (x,t) =
-{n\over 2} \mu_1^{-{n+2 \over 2}} \left[ \dot \mu_1 Z_{n+1} (y_1 ) + \dot \xi_1 \nabla U(y_1 ) - {2\over n} {n-2 \over 2} {n-4 \over 2} \mu_1^{n-2 } \dot \mu_1 H(x, q_1)\right] \,  \bar \la_1  (t) \\
&- \mu_1^{-{n\over 2} } \left( \dot \mu_1 \nabla Z_{n+1} (y_1) + D \nabla U(y_1 ) \right) \cdot {\xi_1 \over \mu_1^2} \bar \la_1 (t) \\
&+ p \left( \sum_{i=1}^k  \mu_i^{-\frac {n-2}2}   U(y_i)  -  \mu_i^{\frac{n-2}2} H(x,q_i)  \right)^{p-1}
\partial_{\lambda_1} [ \mu_1^{-\frac {n-2}2}   U(y_1)  -  \mu_1^{\frac{n-2}2} H(x,q_1) ] \bar \la_1 (t) \\
&- p \left(  \mu_1^{-\frac {n-2}2}   U(y_i) \right)^{p-1} \partial_{\lambda_1} [ \mu_1^{-\frac {n-2}2}   U(y_1) ] \bar \la_1 (t) .
\end{align*}
A carefull analysis of the above terms, and taking into account the restrictions \eqref{rest}- \eqref{rest1} on the parameters $\la$, $\xi$, give
\begin{align*}
\left| \partial_{\lambda_1} S (u_{\mu , \xi })  [\bar \la_1] (x,t)  \right|
\lesssim   \mu_0^{1+\sigma } (t) \,  \|\la_1 \|_{1+\sigma}  \sum_{j=1}^k { \mu_j^{-2} \mu_0^{n-4 \over 2}  \over (1+ |y_j|^{2+\alpha} )}
\end{align*}
This fact, together with the previous arguments gives the validity of
\begin{equation}\label{huff2}
\left| \partial_{\lambda_1 }  S^o_{\mu , \xi} (x,t) [\bar \la_1]  \right| \lesssim  \mu_0^{1+\sigma } (t) \,  \|\la_1 \|_{1+\sigma}  \sum_{j=1}^k { \mu_j^{-2} \mu_0^{n-4 \over 2}  \over (1+ |y_j|^{2+\alpha} )}.
\end{equation}
We shall next estimate the remaining terms in $f$. A direct computation gives that
$$
(\partial_{\lambda_1} V_{\mu , \xi }) [\bar \la_1]  (x,t) = p (p-1) \Bigl[ \left( (u_{\mu , \xi}^* )^{p-2}
- (\mu_1^{-{n-2 \over 2}} U(y_1) )^{p-2} \right) \partial_{\lambda_1}  (\mu_1^{-{n-2 \over 2}} U(y_1) ) \eta_{1,R}
$$
$$
+
\left( 1- \sum_{j=1}^k \eta_{j,R} \right) \, (u_{\mu , \xi}^* )^{p-2} \partial_{\lambda_1}  (\mu_1^{-{n-2 \over 2}} U(y_1) ) \Bigl] \, \bar \la_1 (t) .
$$
Since $| \partial_{\lambda_1}  (\mu_1^{-{n-2 \over 2}} U(y_1) ) | \lesssim
\mu_0^{-1}  |\mu_1^{-{n-2 \over 2}} U(y_1) |$, we get that
\begin{align*}
\left| ( \partial_{\lambda_1} V_{\mu , \xi } ) [\bar \la_1]  \psi  (x,t) \right| &
\lesssim \| \psi \|_{* , \beta , \alpha} \, \mu_0^{1+\sigma } (t) \,  \|\la_1 \|_{1+\sigma}  \, \sum_{j=1}^k {\mu_j^{-2} \mu_0^{{n-4 \over 2} + \sigma } (t) \over (1+ |y_j |^{2+ \alpha } ) },
\end{align*}
using that $\beta = {n-2 \over 2(n-4)}+ {\sigma \over n-4} $. Thus,  we get the expected estimate.  In a very analogous way, we can treat the term $\left[ (u_{\mu , \xi}^* + \phi^{in} )^{p-1} -  (u_{\mu , \xi}^*  )^{p-1}
- (p-1) (u_{\mu,\xi}^*)^{p-2} \phi^{in}   \right] \, (\partial_{\lambda_1} u_{\mu , \xi}^*) \, |\bar \la_1 (t) | $, and get
\begin{align*}
& \left| \left[ (u_{\mu , \xi}^* + \phi^{in} )^{p-1} -  (u_{\mu , \xi}^*  )^{p-1}
- (p-1) (u_{\mu,\xi}^*)^{p-2} \phi^{in}   \right] \, \partial_{\lambda_1} u_{\mu , \xi}^*\right| \\
& \lesssim t_0^{-\ve} \mu_0^{1+\sigma } (t) \,  \|\la_1 \|_{1+\sigma} \, \sum_{j=1}^k { \mu_j^{-2} \mu_0^{{n-4 \over 2}+\sigma } (t) \over (1+ |y_j|^{2+\alpha} )}.
\end{align*}
Similarly one can also treat the last term
$
p \left[ (u_{\mu , \xi}^* + \phi^{in} )^{p-1} -  (u_{\mu , \xi}^*  )^{p-1} \right] \, Z_1
$, and \eqref{huff1} follows from \eqref{huff4}.

\medskip
We now go back to the fixed point problem \eqref{huff5}. Arguing as in the argument of Step 1, we can show that  ${\mathcal A}_1$, defined in \eqref{huff5},  has a fixed point for functions in the set
$ |Z(x,t)|  \leq M  \mu_0^{1+\sigma} (t)  \left( \sum_{j=1}^k {  \mu_0^{{n-4 \over 2} } (t) \over (1+ |y_j|^{\alpha} )} \right)$, for some constant $M$ large and fixed. Indeed, ${\mathcal A}_1$
 is Lipschtz, with Lipschitz constant less than $1$, provided $R$ is chosen properly in terms of $t_0$.
The validity of \eqref{leuco2} for $\partial_{\la_1}  \Psi  [\bar \la_1] $ thus follows. The estimate in \eqref{leuco2new} on $\partial_\xi \Psi [\bar \xi] $ can be obtained arguing as before, with some small modifications.

\bigskip
\noindent
{\bf Step 2.} \ \ Proof of estimate \eqref{leuco2}.

Let us fix $j=1$. For any ${\dot \la}_1$ satisfying \eqref{rest}, the function $\Psi [{\dot \la}_1] $ is a solution to  Problem \eqref{equpsi1}. Differentiating Problem \eqref{equpsi1} with respect to ${\dot \la}_1$ we get a non linear problem, and the Implicit Function Theorems ensures that  the solution is given by
$\partial_{{\dot \la}_1} \Psi [{\dot {\bar \la}}_1] (x,t)$.
Let   $Z(x,t) =  \partial_{\la_1} \Psi [\dot{ \bar \la}_1] (x,t)$. Then $Z$ is the solution to
\begin{align}\label{hufff}
\pp_t Z =&  \Delta Z+  V_{\mu,\xi} Z  \
 + \partial_{\dot \lambda_1}  \left[ \ttt N_{\mu,\xi}\Big(  \psi + \phi^{in}
  \Big)  \right]  [\dot { \bar \la}_1 ]+  \partial_{\dot \lambda_1} \left( S_{\mu,\xi}^{o} \right)  [\dot { \bar \la}_1 ]  \inn \Omega \times [t_0,\infty), \\
\psi =& 0 \onn  \pp\Omega \times [t_0,\infty), \quad \psi(t_0,\cdot) = 0 \inn \Omega. \nonumber
\end{align}
Observe that
\begin{align*}
\partial_{\dot \lambda_1}  \left[ \ttt N_{\mu,\xi}\Big(  \psi + \phi^{in}
  \Big)  \right]  [\dot { \bar \la}_1 ] &= p \left[ (u_{\mu , \xi}^* + \phi^{in}  )^{p-1} -  (u_{\mu , \xi}^*  )^{p-1} \right] \, Z (x,t)  \,
\end{align*}
Thus, $Z$ is a fixed point for the operator
\begin{equation}\label{hufff5}
{\mathcal A}_1 (Z) := T ( \partial_{\dot \lambda_1} \left( S_{\mu,\xi}^{o} \right)  [\dot { \bar \la}_1 ] + p \left[ (u_{\mu , \xi}^* +  \phi^{in} )^{p-1} -  (u_{\mu , \xi}^*  )^{p-1} \right] Z, 0 , 0),
\end{equation}
where  $T$ is the linear operator defined by  Lemma \ref{psistar1}  that to any set of functions $f ( x,t) $, $g (x,t)$, and $h  (x)$ associates the solution $\psi = T(f,g,h)$ to  Problem \eqref{hh1}.
Now, a direct and explicit differentiation with respect to $\dot \lambda_1$ in expressions \eqref{form1}, \eqref{form2} and \eqref{nonsaprei} gives that,
\begin{align*}
\partial_{\dot \lambda_1} S (u_{\mu , \xi }^*)  [\dot { \bar \la}_1 ] (x,t) &= \mu_1^{-{n \over 2}}  \left[ Z_{n+1} (y_1) +{n-2 \over 2} \mu_1^{n-2} H(x, q_1) \right]  \, \dot { \bar \la}_1 (t)  \\
&-\mu_1^{-{n \over 2}}  [ {n-2 \over 2} \Phi_{j} (y_j , t ) +\nabla \Phi_{j} (y_j , t ) \cdot y_j ]  \dot { \bar \la}_1 (t)
\end{align*}
Thus we get
\begin{align*}
\left| \partial_{\dot \lambda_1} S (u_{\mu , \xi }^*)   [\dot { \bar \la}_1 ]  (x,t)  \right|
\lesssim {\mu_0^{-{ n-4 \over 2}} \over R^{n-4-\alpha}} \, \mu_0^{n-3 +\sigma} (t ) \|  \dot { \bar \la}_1 \|_{n-3+\sigma} \sum_{j=1}^k { \mu_j^{-2}   \over (1+ |y_j|^{2+\alpha} )}.
\end{align*}

\medskip
We now go back to the fixed point problem \eqref{hufff5}. Arguing as in Step 2, we can show that  ${\mathcal A}_1$ has a fixed point in the set of functions
$$
\left| Z (x,t)   \right| \, \mu_0^{-(n-3+\sigma)}
\lesssim {\mu_0^{-{ n-4 \over 2}} \over R^{n-4-\alpha}} \sum_{j=1}^k { 1 \over (1+ |y_j|^{\alpha} )}.
$$
This concludes the proof of the first estimate in  \eqref{leuco22}. The  estimate in \eqref{leuco22new} follows after we observe that
$$
\partial_{\dot \xi_1} S (u_{\mu , \xi}^*   [\dot { \bar \xi}_1 ] (x,t) =
\mu_1^{-{n \over 2}} \left[ \nabla U(y_1) + \nabla \Phi_{1} (y_1 , t) \right]\cdot  [\dot { \bar \xi}_1 ],
$$
from which we readily get
\begin{align*}
\left| \partial_{\dot \xi_1} S (u_{\mu , \xi }^*)   [\dot { \bar \xi}_1 ] (x,t)  \right|
\lesssim {\mu_0^{-{ n-4 \over 2}} \over R^{n-3-\alpha}} \sum_{j=1}^k { \mu_j^{-2}   \over (1+ |y_j|^{2+\alpha} )}.
\end{align*}

\bigskip
\noindent
{\bf Step 3.} \ \ Proof of estimate \eqref{leuco3}.
Let us define $Z (x,t) = \partial_{\phi } \psi [ \bar \phi] (x,t) $, for functions $\bar \phi$ satisfying \eqref{rest13}. Thus $Z$ solves
\begin{align}\label{huff6}
\pp_t Z =&  \Delta \psi +  V_{\mu,\xi} Z  \ + \sum_{j=1}^k  [2\nn\eta_{j,R} \nn_x \hat \phi_j + \hat   \phi_j(\Delta_x-\pp_t)\eta_{j,R} ]
     \\
&+  p [ (u_{\mu , \xi}^* + \psi + \phi^{in} )^{p-1} - (u_{\mu , \xi}^* )^{p-1}] \bar \phi   \inn \Omega \times [t_0,\infty), \nonumber \\
\psi =& 0 \onn  \pp\Omega \times [t_0,\infty), \quad \psi(t_0,\cdot) = 0 \inn \Omega. \nonumber
\end{align}
where $\hat \phi_j (x,t) := \mu_{0j}^{-\frac{n-2} 2} \bar \phi_j\left (\frac {x- \xi_j} {\mu_{0j}} , t     \right )$.

Arguing as in the proof of \eqref{leuco2}, we can show that
$$
\left| \sum_{j=1}^k  [2\nn\eta_{j,R} \nn_x \hat \phi_j + \hat   \phi_j(\Delta_x-\pp_t)\eta_{j,R} ] \right| \lesssim t_0^{-\ve} \| \bar \phi \|_{ \sigma , a} \left( \sum_{j=1}^k {\mu_j^{-2} \mu_0^{n-2 \over 2} (t) \over (1+ |y_j|^{2+ \alpha} ) } \right),
$$
and also that
\begin{align*}
&\left| p \left[ (u_{\mu , \xi}^* + \psi +  \phi^{in} )^{p-1} -  (u_{\mu , \xi}^*  )^{p-1} \right]  \bar \phi   \right|  \lesssim  t_0^{-\ve} \| \bar \phi \|_{ \sigma , a}
\, \left[ \| \psi \|_{* , \beta , \alpha} ^{p-1} + \| \phi^{in} \|_{\sigma , a}^{p-1} \right] \,  \left( \sum_{j=1}^k {\mu_j^{-2} \mu_0^{n-2 \over 2} (t) \over (1+ |y_j|^{2+ \alpha} ) } \right) .
\end{align*}
A direct application of Lemma \ref{psistar1} gives \eqref{leuco3}.

This concludes the proof of Proposition \ref{probesterno}.
\end{proof}

\setcounter{equation}{0}
\section{ Choice of the parameters  $\la $ and $\xi$ in the inner problem }\label{sec5}

Let $\psi = \Psi[\phi,\la,\xi , \dot \la , \dot \xi ] $ be the function in Proposition \ref{probesterno}. After substituting $\psi = \Psi[\phi,\la,\xi , \dot \la , \dot \xi ] $ into the inner problem \equ{equ3}, the full problem gets reduced to solving the following system of equations
 for any $j=1,\ldots,k$,
\begin{equation} \label{equ333}
\mu_{0j}^2 \pp_t  \phi_j  =  \Delta_y \phi_j +  pU(y)^{p-1} \phi_j
+ H_j [\phi , \la , \xi , \dot \la , \dot \xi] (y,t) , \quad y \in B_{2R} (0), \quad t \geq t_0
\end{equation}
for $j=1,\ldots, k$,
where
\begin{align}\label{defHj}
&H_j  [\phi , \la , \xi , \dot \la , \dot \xi] (y,t)=
\mu_{0j}^{\frac{n+2}2}  S_{\mu,\xi,j}  (\xi_j +\mu_{0j} y,t) \\
&+  p \mu_{0j}^{\frac{n-2}2} {\mu_{0j}^2 \over \mu_{j}^2 } U^{p-1} ( {\mu_{0j} \over \mu_j}  y) \psi(\xi_j +\mu_{0j} y,t)
 +  B_j[\phi_j] + B_j^0 [\phi_j]   \nonumber
\end{align}
with
$B_j [\phi_j] $ and $B_j^0 [\phi_j] $ defined respectively in \eqref{gajardo1} and \eqref{gajardo2}.

\medskip
We next describe precisely our strategy to solve \eqref{equ333}.
Consider the  change of variable,
$$
t = t (\tau ) , \quad
{dt \over d\tau } = \mu_{0j}^2  (t) ,
$$
that reduces \eqref{equ333} to
\begin{equation} \label{equ3331}
 \pp_\tau  \phi_j  =  \Delta_y \phi_j +  pU(y)^{p-1} \phi_j
+ H_j [\phi , \la , \xi , \dot \la , \dot \xi] (y,t (\tau ) ) , \quad y \in B_{2R} (0), \quad \tau \geq \tau_0
\end{equation}
where $\tau_0 $ is such that $t (\tau_0 ) = t_0$. This is to say
\be \label{ttau}
t^{n-2 \over n-4} = {n-2 \over n-4}\,  \tau.
\ee
We shall construct a solution  $\phi=(\phi_1,\ldots, \phi_k)$ of the system
\begin{align} \label{equ3332}
 \pp_\tau  \phi_j  &=  \Delta_y \phi_j +  pU(y)^{p-1} \phi_j
+ H_j [\phi , \la , \xi , \dot \la , \dot \xi] (y,t (\tau ) ) , \quad y \in B_{2R} (0), \quad \tau \geq \tau_0\nonumber \\
& \phi_j (y, \tau_0 ) = e_{0j} Z_0 (y) , \quad y \in B_{2R} (0),
\end{align}
for some constant $e_{0j}$, and all $j=1,\ldots, k$.

\medskip
We will prove that the system of equations \eqref{equ3332} is solvable in the class of functions $\phi_j$ that satisfy \eqref{rest13}, provided that in addition
 the parameters  $\xi$ and $\la$ are chosen so that the functions  $H_j [\phi , \la , \xi , \dot \la , \dot \xi] (y,t (\tau ) ) $ satisfy the orthogonality conditions
\begin{align}\label{gajardo3}
\int_{B_{2R} } H_j [\phi , \la , \xi , \dot \la , \dot \xi] (y,t (\tau ) )  Z_\ell (y)  dy &= 0, \quad {\mbox {for all}} \quad  t>t_0 \\
&{\mbox {for all}}
\quad j=1, \ldots , k , \, \ell=1, 2, \ldots , n+1. \nonumber
\end{align}
We recall that $Z_\ell (y) $, for $\ell =1 , 2, \ldots , n+1$ are the only bounded elements
in the kernel of the linear elliptic operator $L_0$.
A central point of the proof is to design a linear theory that allows us to solve system \equ{equ3332} by means of a contraction mapping argument. Thus
for a large number $R>0$
we shall construct a solution to an initial value problem of the form
\be \label{p110nuovo}
\phi_\tau  =
\Delta \phi + pU(y)^{p-1} \phi + h(y,\tau )  \inn B_{2R} \times (\tau_0, \infty )
\ee
$$
\phi(y,\tau_0) = e_0Z_0(y)  \inn B_{2R}.
$$
 Let
\be\label{ttau2}
\nu = 1+ {\sigma \over n-2}  \quad {\mbox {so that}} \quad\mu_0^{n-2 +\sigma} (t) \sim \tau^\nu,
\ee
thanks to \eqref{ttau}, and define
\be \label{minchia}
\|h\|_{\nu, a} := \sup_{\tau > \tau_0 }  \sup _{y\in B_{2R}}   \tau^{\nu} (1+ |y|^a ) \, |h( y ,\tau ) | .
\ee

The following is a central step in the proof.
\begin{prop} \label{prop000}
Let $\nu,a$ be given positive numbers with $0<a <1$, $\nu >0$. Then, for all sufficiently large $R>0$ and any  $h=h(y,\tau)$  with  $\|h\|_{\nu, 2+a} <+\infty$
that satisfies for all $j=1,\ldots, n+1$
\be
 \int_{B_{2R}} h(y ,\tau)\, Z_{j} (y) \, dy\ =\ 0  \foral \tau\in (\tau_0, \infty)
\label{ortio}\ee
there exist  $\phi = \phi [h]$  and $e_0 = e_0 [h]$ which solve Problem $\equ{p110nuovo}$. They define linear operators of $h$
that satisfy the estimates
\be
  |\phi(y,\tau) |  \ \lesssim   \  \tau^{-\nu}  \frac {R^{n+1-a}} { 1+ |y|^{n+1}} \,   \|h\|_{\nu, 2+a}.
\label{ctta1}\ee
and
\be
 | e_0[h]| \, \lesssim \,  \|h\|_{ \nu, 2+a}.
\label{cott2}\ee
\end{prop}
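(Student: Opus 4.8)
\emph{Strategy.} The plan is to split off the single unstable direction of the elliptic operator $L_0$, along which the scalar $e_0$ in the initial datum must be tuned, from all the remaining directions, where the orthogonality relations \eqref{ortio} against $Z_1,\dots,Z_{n+1}$ make the problem solvable with fast spatial decay. Recall that $L_0 Z_0 = |\lambda_0|\,Z_0$ with $|\lambda_0|>0$, that $Z_0$ is radial and exponentially decaying, and that $Z_1,\dots,Z_{n+1}$ span the kernel of $L_0$; for $n\ge 5$ all these functions lie in $L^2(\R^n)$ and are mutually orthogonal. Write $\phi = e(\tau)\,Z_0 + \phi^{\perp}$ with $\int_{B_{2R}}\phi^{\perp}Z_0\,dy = 0$, and $h = \hat h(\tau)\,Z_0 + h^{\perp}$ with $\hat h(\tau) = \big(\int Z_0^2\big)^{-1}\int_{B_{2R}} h\,Z_0\,dy$. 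Projecting \eqref{p110nuovo} onto $Z_0$ yields $\dot e = |\lambda_0|\,e + \hat h(\tau)$ up to a term of size $O(e^{-cR})$ coming from the exponential decay of $Z_0$ and the vanishing of $\phi$ on $\partial B_{2R}$.

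\emph{Tuning $e_0$.} Since $|\lambda_0|>0$, the only choice of $e(\tau_0)$ ruling out exponential growth of $e$ is $e(\tau_0) = -\int_{\tau_0}^{\infty} e^{|\lambda_0|(\tau_0-s)}\,\hat h(s)\,ds$, and then $e(\tau) = -\int_{\tau}^{\infty} e^{|\lambda_0|(\tau-s)}\,\hat h(s)\,ds$. Using $Z_0\in L^1(\R^n)$ one gets $|\hat h(\tau)|\lesssim \tau^{-\nu}\|h\|_{\nu,2+a}$, hence $|e(\tau)|\lesssim \tau^{-\nu}\|h\|_{\nu,2+a}$; setting $e_0[h]:= e(\tau_0)$ gives $|e_0[h]|\lesssim\|h\|_{\nu,2+a}$, which is \eqref{cott2}, while $|e(\tau)Z_0(y)|\lesssim \tau^{-\nu}\|h\|_{\nu,2+a}\,e^{-c|y|}$ is trivially absorbed into the right-hand side of \eqref{ctta1}. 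It then remains to solve $\partial_\tau\phi^{\perp} = L_0\phi^{\perp} + h^{\perp}$ in $B_{2R}\times(\tau_0,\infty)$ with $\phi^{\perp}(\cdot,\tau_0)=0$ and homogeneous Dirichlet data, where $h^{\perp}=h-\hat h Z_0$ still satisfies \eqref{ortio} up to $O(e^{-cR})$; this is a well-posed linear parabolic problem, $\phi^{\perp}$ remains orthogonal to $Z_0$ up to exponentially small errors, and one checks directly that $\phi := e(\tau)Z_0 + \phi^{\perp}$ solves \eqref{p110nuovo}, with $h\mapsto (e_0,\phi)$ linear by construction. The substance of the proposition is thus the a priori bound \eqref{ctta1} for $\phi^{\perp}$.

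\emph{The weighted estimate.} I would prove \eqref{ctta1} by decomposing $h^{\perp}$ and $\phi^{\perp}$ into spherical-harmonic modes and analyzing, in each mode, the variation-of-parameters formula for the Dirichlet resolvent of $L_0$ on $B_{2R}$, whose homogeneous radial solutions in mode $\ell$ behave like $|y|^{\ell}$ and $|y|^{-(\ell+n-2)}$. The orthogonality \eqref{ortio} is used in modes $0$ and $1$ — where $L_0$ carries the kernel directions $Z_{n+1}\sim|y|^{-(n-2)}$ and $Z_1,\dots,Z_n\sim|y|^{-(n-1)}$ — to discard the slowly decaying (or non-decaying) homogeneous contributions, while for modes $\ell\ge 2$ the Dirichlet condition at $|y|=2R$ controls the otherwise growing homogeneous solution. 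A reckoning of the resulting integrals, together with the parabolic smoothing in $\tau$, bounds each mode's contribution, for $|y|\le 2R$, by $\tau^{-\nu}R^{n+1-a}(1+|y|)^{-(n+1)}\|h\|_{\nu,2+a}$; intuitively, this is the $|y|^{-(n+1)}$-homogeneous function extending into $B_{2R}$ the size $\tau^{-\nu}R^{-a}\|h\|_{\nu,2+a}$ that the truncation of $h$ produces near $|y|\sim 2R$. Summing the modes yields \eqref{ctta1}.

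\emph{Main obstacle.} The heart of the matter is the last step: \eqref{ortio} annihilates the slowly decaying homogeneous modes of $L_0$ only on all of $\R^n$, not on the finite ball, so one must track precisely the boundary layer near $\partial B_{2R}$ created by truncating $h$ and show it propagates inward with exactly the weight $R^{n+1-a}(1+|y|)^{-(n+1)}$, uniformly in $\tau$ and $R$, while carrying along the non-self-adjoint term $\partial_\tau$. An alternative, should the mode-by-mode bookkeeping become unwieldy, is a blow-up/contradiction argument: assuming \eqref{ctta1} fails, rescale around a near-maximum point of the weighted quantity $\tau^{\nu}(1+|y|^{n+1})R^{a-n-1}|\phi^{\perp}|$ to obtain in the limit either a nonzero bounded eternal solution of $\partial_s w = L_0 w$ on $\R^n\times\R$ with $w$ orthogonal to $Z_0,\dots,Z_{n+1}$ — excluded by the spectral classification of $L_0$ — or, when the concentration escapes to spatial infinity, a nonzero bounded solution of the pure heat equation with vanishing source and prescribed spatial decay, excluded by comparison with an explicit exterior supersolution; the $R$-dependence then re-enters through the control of $\phi^{\perp}$ near $\partial B_{2R}$.
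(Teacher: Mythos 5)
Your overall strategy is the same as the paper's (split off the $Z_0$ direction, tune the scalar $e_0$ against the unstable mode, handle the remaining components by spherical-harmonic decomposition and variation-of-parameters), but there are two genuine gaps.

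First, your claim that $\phi^\perp$ ``remains orthogonal to $Z_0$ up to exponentially small errors'' after solving the Dirichlet problem $\partial_\tau\phi^\perp = L_0\phi^\perp + h^\perp$ is false. Testing the equation against $Z_0$ gives
\begin{equation*}
\frac{d}{d\tau}\int_{B_{2R}}\phi^\perp Z_0 \;=\; |\lambda_0|\int_{B_{2R}}\phi^\perp Z_0 \;+\; \int_{\partial B_{2R}} Z_0\,\partial_\nu\phi^\perp ,
\end{equation*}
and even though the boundary term is $O(e^{-cR})$, it is a \emph{forcing} in a linear ODE whose homogeneous part grows like $e^{|\lambda_0|\tau}$; integrating from $\tau_0$ produces a $Z_0$-component of size $e^{|\lambda_0|(\tau-\tau_0)}\cdot O(e^{-cR})$, which is \emph{not} small uniformly in $\tau$ and destroys any $\tau^{-\nu}$ decay bound. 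This is precisely why the paper does not simply project off $Z_0$ but instead works with the Lagrange-multiplier formulation \eqref{p11}: the free function $c(\tau)$ is solved for (equation \eqref{cc}) so that the constraint $\int_{B_{2R}}\tilde\phi\,Z_0 = 0$ holds \emph{exactly} for all $\tau$, killing the unstable growth by construction; the $e(\tau)Z_0$ correction is then added at the very end, with $e(\tau) = \int_\tau^\infty e^{\sqrt{|\lambda_0|}(\tau-s)}c(s)\,ds$ chosen as the unique bounded solution of $e'-|\lambda_0|e = c$, and $e_0 := e(\tau_0)$. Your $e_0$-tuning is morally the same as this last step, but without the multiplier $c(\tau)$ in the interior equation the construction of $\phi^\perp$ does not close.

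Second, the weighted estimate \eqref{ctta1} is the actual substance of the proposition, and you only gesture at it. The difficulty you correctly identify — that the orthogonality conditions \eqref{ortio} only annihilate the slowly decaying kernel modes on $\R^n$, not on $B_{2R}$, so a boundary layer of size $R^{n+1-a}(1+|y|)^{-(n+1)}$ must be tracked — is handled in the paper by a specific two-step device that is absent from your sketch: in modes $0$ and $1$, first precondition by solving the \emph{stationary} problem $L_0 H + h = 0$ on all of $\R^n$ (using \eqref{ortio} to guarantee solvability of $L_0^{-1}$) to get $\|H\|_{\nu,a}\lesssim\|h\|_{\nu,2+a}$, then apply the slow-decay Lemma \ref{slow} to $H$ on $B_{3R}$ to produce $\Phi$, and finally set $\phi := L_0[\Phi]|_{B_{2R}}$. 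The point is that $L_0[\Phi]$ gains two orders of spatial decay over $\Phi$, and the pointwise bound $|L_0[\Phi]|\lesssim \tau^{-\nu}R^{n+1-a}(1+|y|)^{-(n+1)}$ is extracted from scaled interior parabolic Schauder estimates (the rescaling $\Phi_\rho(z,t) = \Phi(\rho e+\rho z, \tau_1+\rho^2 t)$). Lemma \ref{slow} itself rests on the spectral-gap estimate of Lemma \ref{lema01} and the barrier Lemma \ref{lemaphi0}, neither of which appears in your plan. Your alternative blow-up/compactness argument could plausibly replace this machinery, but as written it is only a one-sentence suggestion, and the crucial issue of how the factor $R^{n+1-a}$ re-enters from the boundary (which you flag as ``the main obstacle'') is left open.

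In short: keep the $e_0$-tuning, but replace the free projection of $\phi^\perp$ by the constrained problem with a time-dependent multiplier $c(\tau)Z_0$, and supply the missing weighted estimate — either by the paper's $L_0^{-1}$-then-$L_0$ preconditioning or by fully carrying out the blow-up argument.
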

This result is a consequence of the key Proposition \ref{prop0} whose proof we postpone to Section \ref{seclineartheory}, so that not interrupting the main thread of the proof.
In order to apply this result  we need the  orthogonality conditions  \equ{gajardo3} satisfied. In what remains of this section we shall choose $\la,\xi$ as functions of a given $\phi$
in such a way that
\eqref{gajardo3} holds. This is  a  system of coupled, nonlocal, nonlinear  ordinary differential equations. Next we show that the system   is solvable and admits a solution $\la = \la [\phi] (t) $, $\xi = \xi [\phi ] (t)$, which satisfy the restrictions \eqref{rest}-\eqref{rest1}, for any given $\phi$ satisfying \eqref{rest13}.
We shall see that the solution $\xi = \xi [\phi]$ is unique, while
$\la = \la [\phi]$ has $k$ degrees of freedom in a small neighborhood of a specific solution. This degree of freedom is due to the presence of elements in the kernel of a linear operator in $\la$,  that also satisfy the decaying conditions \eqref{rest} and \eqref{rest1}. At last, we show the Lipschitz dependence of $\la = \la[\phi]$, $\xi = \xi[\phi]$ on $\phi$, which is a crucial property to ensure the existence of $\phi$, solution to \eqref{equ3332}, and thus to complete our construction.

\medskip
We use the notations $$\la (t) =
\begin{pmatrix} \la_1 (t) \\ \la_2 (t)  \\ \vdots  \\ \la_k (t)  \end{pmatrix}, \,
\dot \la (t) =
\begin{pmatrix} \dot \la_1 (t) \\ \dot \la_2 (t)  \\ \vdots  \\ \dot \la_k (t)  \end{pmatrix}, \,
\xi (t) =
\begin{pmatrix} \xi_1 (t) \\ \xi_2 (t)  \\ \vdots  \\ \xi_k (t)  \end{pmatrix},
\, \dot \xi (t) =
\begin{pmatrix} \dot \xi_1 (t) \\ \dot \xi_2 (t)  \\ \vdots  \\ \dot \xi_k (t)  \end{pmatrix},
\,
q =
\begin{pmatrix} q_1  \\ q_2   \\ \cdots  \\ q_k   \end{pmatrix}.
$$
Let us first describe \eqref{gajardo3} when $\ell = n+1$.

\medskip

\begin{lema}\label{calcolo1}
There exists a positive constant $\ve>0$ so that \eqref{gajardo3} with
$\ell = n+1$  is equivalent to
\begin{align}
\label{gajardo7}
\dot \la +  {1\over t} P^T {\rm diag}\, \left ( {1+ \bar \sigma_j \over n-4} \right )  P \la  &= \Pi_1 [\la, \xi , \dot \la , \dot \xi,  \phi ] (t)
\end{align}
where $P$ is the $k \times k$ invertible matrix defined in \eqref{D2Ib} and \eqref{gajardo6}, and the numbers $\bar \sigma_j >0$ are defined in \eqref{D2Ib}.
Moreover,
\begin{align}
\label{gajardo77}
\Pi_1 [\la, \xi , \dot \la , \dot \xi,  \phi ] (t)  &=
\mu_0^{n-3+\sigma } (t)  f(t) + \, t_0^{-\ve} \,   \Theta  [\dot \la , \dot \xi , \mu_0^{n-4} \la , \mu_0^{n-3} (\xi-q) , \mu_0^{n-3+\sigma} \phi ] (t)
\end{align}
where
$f = f(t)$ is an explicit vector function,   smooth and bounded for $t \in [t_0 , \infty)$, and
$\Theta [ \cdots ] (t)$ is a smooth and bounded function of $t \in [t_0 , \infty)$, and it has Lipschtz dependence with respect to its parameters, in the sense that, for any $t\geq t_0$,
\begin{equation}\label{gajardo111}
\left| \Theta [\dot{ \la} _1  ] (t)  - \Theta [\dot{\la}_2  ] (t) \right|   \lesssim t_0^{-\ve}  \,   \left|\dot{\la}_1 (t) - \dot{\la}_2  (t) \right|
\end{equation}
\begin{equation}\label{gajardo112}
\left| \Theta [ \dot{\xi}_1 ] (t)  - \Theta [ \dot{\xi}_2 ] (t) \right|  \lesssim t_0^{-\ve}  \,   \left|  \dot{\xi}_1 (t) - \dot{\xi}_2  (t)   \right| ,
\end{equation}
and also
\begin{equation}\label{gajardo113}
\Bigl| \Theta [ \mu_0^{n-4} \la_1   ] (t)  - \Theta [\mu_0^{n-4} \la_2 ] (t) \Bigl|  \lesssim t_0^{-\ve}  \,   \Bigl| {\la}_1 (t) - {\la}_2  (t)   \Bigl|
\end{equation}
\begin{equation}\label{gajardo114}
\Bigl| \Theta [\mu_0^{n-3}(  {\xi}_1 -q ) ] (t)  - \Theta [\mu_0^{n-3} (\xi_2 - q)  ] (t) \Bigl|   \lesssim t_0^{-\ve}  \,   \Bigl|{\xi}_1 (t) - {\xi}_2  (t) \Bigl|,
\end{equation}
and
\begin{equation}
\label{gajardo11}
\Bigl| \Theta [\mu_0^{n-3+\sigma } \phi_1] (t)  - \Theta [\mu_0^{n-3+\sigma } \phi_2 ] (t) \Bigl|   \lesssim t_0^{-\ve}  \,   \| \phi_1 - \phi_2 \|_{\sigma , a}.
\end{equation}
\end{lema}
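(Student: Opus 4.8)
We plan to substitute the decomposition \eqref{defHj} of $H_j$ into the orthogonality relation \eqref{gajardo3} with $\ell=n+1$, rewrite the error as a function of $y$, isolate the two terms that survive the $Z_{n+1}$--projection at main order, and then use the algebraic identities behind the choices \eqref{defdef}--\eqref{mu0} of $\mu_0$, \eqref{ee} of the $b_j$, and \eqref{D2Ib}--\eqref{gajardo6} of $P$ to recognize \eqref{gajardo7}. Setting $x=\xi_j+\mu_{0j}y$ we have $y_j=(x-\xi_j)/\mu_j=(\mu_{0j}/\mu_j)\,y$, and under \eqref{rest}--\eqref{rest1} one has $\mu_{0j}/\mu_j=1+O(\mu_0^\sigma)$; inserting \eqref{we2} this will give
\[
\mu_{0j}^{\frac{n+2}2}S_{\mu,\xi,j}(\xi_j+\mu_{0j}y,t)=\mu_{0j}\big[\,\dot\la_j Z_{n+1}(y)-\mu_0^{n-4}p\,U(y)^{p-1}(M\la)_j\,\big]+\mu_{0j}\,\mathcal E_j(y,t),
\]
where $\mathcal E_j$ collects the $O(\mu_0^\sigma)$ corrections obtained by replacing $(\mu_{0j}/\mu_j)^{\frac{n+2}2}$, $Z_{n+1}(y_j)$ and $U(y_j)^{p-1}$ by their values at $\mu_{0j}/\mu_j=1$, $y$; the full ``$\la_j E_{0j}$'' and ``$\mu_j E_{1j}$'' terms of \eqref{we2}; and quadratic--in--$\la$ remainders.

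Two structural facts will reduce the $Z_{n+1}$--projection of $H_j$ to a scalar identity. First, the choice \eqref{defdef} makes $\mu_{0j}E_{0j}[\bar\mu_0,\dot\mu_{0j}]=-\gamma_j\mu_0^{n-2}q_0$ with $q_0=pU^{p-1}c_2+c_1Z_{n+1}$ and $\int_{\R^n}q_0Z_{n+1}=0$ (see \eqref{newerror}); hence the leading part of the ``$\la_j E_{0j}$'' term is $Z_{n+1}$--orthogonal and will contribute only through its $O(|\la|^2)$ remainder. Second, the ``$\mu_j E_{1j}$'' term is a sum of (fixed vector)$\cdot y_j$ against radial factors, i.e.\ odd functions of $y$; since $B_{2R}$ is symmetric and $Z_{n+1}$ radial, all such terms --- and their corrections under $y_j=(1+O(\mu_0^\sigma))y$ --- pair to zero against $Z_{n+1}$, so they drop out of the $\ell=n+1$ projection entirely. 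Using $\int_{B_{2R}}Z_{n+1}^2=c_2+O(R^{-(n-4)})$ and $p\int_{B_{2R}}U^{p-1}Z_{n+1}=-c_1+O(R^{-2})$ (by \eqref{defc1c2} and $Z_{n+1}\sim|y|^{2-n}$), estimating the $\psi$--term of \eqref{defHj} by Proposition \ref{probesterno} --- the $U^{p-1}$ weight localizes to $|y|\simeq1$, where for $i\neq j$ the variable $(x-\xi_i)/\mu_{0i}$ is large since the $q_i$ are distinct, so only the $j$--th bubble contributes and the pairing is $O(t_0^{-\ve}\mu_0^{n-2+\sigma})$ --- and estimating $B_j[\phi_j],B_j^0[\phi_j]$ from \eqref{gajardo1}--\eqref{gajardo2} by \eqref{rest13} together with $\mu_{0j}\dot\mu_{0j}\sim\mu_0^{n-2}$, we expect to obtain that \eqref{gajardo3} with $\ell=n+1$ is equivalent to
\[
c_2\mu_{0j}\dot\la_j+c_1\mu_{0j}\mu_0^{n-4}(M\la)_j+\mu_{0j}\,r_j=0\foral t>t_0,
\]
where $|r_j|$ is controlled by $t_0^{-\ve}\big(\mu_0^{n-3}|\la|+|\dot\la|\big)+\mu_0^{n-4}|\la|^2$ plus the $\psi$-- and $\phi$--contributions and a $\la,\xi,\phi$--independent remainder of size $O(\mu_0^{n-3+\sigma})$.

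Dividing by $c_2\mu_{0j}$ and moving the $O(t_0^{-\ve})$ multiplicative corrections of $c_2$ to the right, this becomes $\dot\la_j+\tfrac{c_1}{c_2}\mu_0^{n-4}(M\la)_j=(\Pi_1)_j$. By \eqref{mu0}, $\mu_0^{n-4}=\gamma_n^{n-4}t^{-1}$ and, from the value of $\gamma_n$ there, $\tfrac{c_1}{c_2}\gamma_n^{n-4}=\tfrac{n-2}{2(n-4)}$; combined with $\tfrac{n-2}2 M=P^T\,\mathrm{diag}(1+\bar\sigma_1,\ldots,1+\bar\sigma_k)\,P$ from \eqref{gajardo6} this yields $\tfrac{c_1}{c_2}\mu_0^{n-4}M=\tfrac1t\,P^T\mathrm{diag}\big(\tfrac{1+\bar\sigma_j}{n-4}\big)P$, which is \eqref{gajardo7}. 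It then remains to identify $\Pi_1=-c_2^{-1}r$: its $\la,\xi,\phi$--independent part is explicit, smooth and of size $\mu_0^{n-3+\sigma}$, giving the term $\mu_0^{n-3+\sigma}f(t)$, while every other piece is a smooth bounded function of $(\mu_0^{-1}\mu,\xi,x)$ carrying a prefactor that is one of $\mu_0^\sigma\le t_0^{-\sigma/(n-4)}$, $R^{-(n-4)}=t_0^{-\rho(n-4)}$, $\mu_0(t_0)^{2-\sigma}$, or the $t_0^{-\ve}$ of Propositions \ref{probesterno}--\ref{probesterno1}; rewriting $\la,\xi-q,\phi$ through the rescaled arguments $\mu_0^{n-4}\la$, $\mu_0^{n-3}(\xi-q)$, $\mu_0^{n-3+\sigma}\phi$ then gives \eqref{gajardo77}, and the Lipschitz estimates \eqref{gajardo111}--\eqref{gajardo11} will follow because $\Pi_1$ depends linearly, with a $t_0^{-\ve}$ prefactor, on $\dot\la,\dot\xi$ (through $\mathcal E_j$ and $B_j$) and smoothly on $\la,\xi,\phi$, with the $\phi$-- and $\xi,\dot\xi$--dependence controlled by the bounds \eqref{leuco2}--\eqref{leuco3} for $\psi=\Psi(\la,\xi,\dot\la,\dot\xi,\phi)$.

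We expect the delicate point to be the second step: one must check that, of the many error terms in \eqref{we2} and \eqref{defHj}, the only one producing an $O(1)$ coefficient in front of $\la$ after pairing with $Z_{n+1}$ and dividing by $\mu_{0j}$ is $-\mu_0^{n-4}pU^{p-1}(M\la)$, and that this coefficient combines with $\gamma_n$ and $P$ to give \emph{exactly} the operator in \eqref{gajardo7}. This hinges on three facts: \eqref{defdef} renders $E_{0j}[\bar\mu_0,\dot\mu_{0j}]$ parallel to $q_0\perp Z_{n+1}$, the $\xi$--direction error is odd in $y$, and $\gamma_n$ is precisely the constant fixed in \eqref{mu0}. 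The remaining work --- tracking the sizes and Lipschitz constants of all the corrections --- is routine given the weighted norms \eqref{normfinal}, \eqref{rest}, \eqref{rest1}, \eqref{starstar}, \eqref{rest13} and the estimates of Propositions \ref{probesterno} and \ref{probesterno1}.
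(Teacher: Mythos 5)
Your proposal follows essentially the same route as the paper's own proof: it projects the decomposition \eqref{defHj} of $H_j$ against $Z_{n+1}$, uses the orthogonality $\int_{\R^n}q_0 Z_{n+1}=0$ (consequence of \eqref{defdef}, \eqref{newerror}) and the oddness of the $\xi$-direction error to kill the sub-leading contributions, then identifies the surviving $\dot\la_j$ and $\mu_0^{n-4}(M\la)_j$ terms and divides by the coefficient of $\dot\la_j$, estimating the $\psi$, $B_j$, $B_j^0$ contributions via Propositions \ref{probesterno}--\ref{probesterno1} exactly as the paper does. The explicit verification that $\tfrac{c_1}{c_2}\gamma_n^{n-4}=\tfrac{n-2}{2(n-4)}$ and that $\tfrac{n-2}{2}M=P^T\mathrm{diag}(1+\bar\sigma_j)P$ combine to give the operator in \eqref{gajardo7} is a useful clarification that the paper's text leaves implicit (and indeed the paper's displayed formula \eqref{vec1} contains a small typo in the diagonal entries), but the overall argument is the same.
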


\medskip
\begin{proof}
Let $\sigma $ be the positive number fixed in \eqref{defsigma}.
Let $\phi$ satisfy \eqref{rest13}.
Fix $j \in \{ 1, \ldots , k\}$. We want to compute
$$
\int_{B_{2R} } H_j [\phi , \la , \xi , \dot \la , \dot \xi] (y,t (\tau ) )  Z_{n+1} (y)  dy,
$$
where $H_j $ is given by \eqref{defHj}. We start with $\int_{B_{2R}} \mu_{0j}^{\frac{n+2}2}  S_{\mu,\xi,j}  (\xi_j +\mu_{0j} y,t)  Z_{n+1} (y) \, dy$. We write
\begin{align*}
\mu_{0j}^{\frac{n+2}2}  & S_{\mu,\xi,j}  (\xi_j +\mu_{0j} y,t)  = \left( {\mu_{0j} \over \mu_j } \right)^{n+2 \over 2} \left[ \mu_{0j} S_1 (z , t) +
\lambda_j b_j S_2 (z,t) + \mu_j S_3   (z,t)  \right]_{z= \xi_j + \mu_{j} y}\\
&+ \left( {\mu_{0j} \over \mu_j } \right)^{n+2 \over 2}\mu_{0j}  \left[ S_1 ( \xi_j + \mu_{0j} y , t)  - S_1 (\xi_j + \mu_{j} y, t) \right] \\
&+ \left( {\mu_{0j} \over \mu_j } \right)^{n+2 \over 2} \la_j b_j   \left[ S_2 ( \xi_j + \mu_{0j} y , t)  - S_2 (\xi_j + \mu_{j} y, t) \right] \\
& +\left( {\mu_{0j} \over \mu_j } \right)^{n+2 \over 2} \mu_{j} \left[  S_3 ( \xi_j + \mu_{0j} y , t)  - S_3 (\xi_j + \mu_{j} y, t) \right] \\
\end{align*}
where
$$
S_1 (z) = \dot \la _j\, Z_{n+1}({z-\xi_j \over \mu_j} ) \, - \,   \mu_0^{n-4}  pU({z-\xi_j \over \mu_j})^{p-1} \,   \sum_{i=1}^k M_{ij}\la_i\,
$$
$$
S_2 (z ) = \dot \mu_0 Z_{n+1}  ({z-\xi_j \over \mu_j} )+ p U^{p-1} ({z-\xi_j \over \mu_j})  \mu_0^{n-3} \left( - b_j^{n-2} H(q_j , q_j ) + \sum_{i\not= j} (b_i b_j)^{n-2 \over 2} G(q_i , q_j ) \right)
$$
and
$$
S_3 (z) = \dot \xi_j \cdot \nn U({z-\xi_j \over \mu_j})  +  pU({z-\xi_j \over \mu_j})^{p-1} \big [-\mu_j^{n-2} \nn_x H(q_j ,q_j)   \,  + \,  \sum_{i\ne j}  \mu_j^{\frac {n-2}2} \mu_i^{\frac {n-2}2}  \nn_x G( q_j,q_i )\, \big ] \cdot  {z-\xi_j \over \mu_j}.
$$
A direct computation gives
$$
\int_{B_{2R}} S_1 (\xi_j + \mu_j y ) Z_{n+1} (y) \, dy =  c_1 (1+ O(R^{4-n} ) ) \dot \la_j + c_2
(1+ O(R^{-2}) )  \mu_0^{n-4} \sum_{i\not=j} M_{ij} \la_i ,
$$
with $c_1$ and $c_2$ defined in \eqref{defc1c2},
$$
\int_{B_{2R}} S_2 (\xi_j + \mu_j y ) Z_{n+1} (y) \, dy = O(R^{-2} + R^{-4+n} ) \mu_0^{n-3} (t)
$$
since by construction $\int_{\R^n } S_2 (\xi_j + \mu_j y) Z_{n+1} (y) \, dy = 0$, and
$$
\int_{B_{2R}} S_3 (\xi_j + \mu_j y ) Z_{n+1} (y) \, dy = 0
$$
by symmetry.  Since ${\mu_{0j} \over \mu_j} = ( 1+ {\la_j \over \mu_{0j}} )^{-1}$, we get, for any $\ell = 1 , 2 , 3 $
\begin{align*}
\int_{B_{2R} } & \left[ S_\ell ( \xi_j + \mu_{0j} y , t)  - S_\ell (\xi_j + \mu_{j} y, t) \right]  Z_{n+1} (y) \, dy =g(t, {\la \over \mu_0} )  \dot \la_j + g(t, {\la \over \mu_0} )  \dot \xi \\
&+ g(t, {\la \over \mu_0} )  \sum_{i} \mu_0^{n-4} \la_i + \mu_0^{n-3+\sigma} f(t)
\end{align*}
where $g$ denotes a smooth and bounded function, such that $g(\cdot , s ) \sim s$ as $s \to 0$, and $f$ is smooth and bounded.
Thus we conclude that
\begin{align}\label{vec1}
c \left( {\mu_{j} \over \mu_{0j} } \right)^{n+2 \over 2}  \mu_{0j}^{-1} \int_{B_{2R}} & \mu_{0j}^{\frac{n+2}2}  S_{\mu,\xi,j}  (\xi_j +\mu_{0j} y,t)  Z_{n+1} (y) \, dy =  \left[ \dot \la_j + {1\over t} \left( P^T \hbox{diag}\, \big ({1+ \sigma_\ell \over n-2} \big ) P \la \right)_j \right] \nonumber \\
& + t_0^{-\ve} g(t, {\la \over \mu_0} ) ( \dot \la +\dot \xi )+ \mu_0^{n-4}  t_0^{-\ve} g(t, {\la \over \mu_0} ) \la,
\end{align}
where $c$ is an explicit positive number,
for functions $g$ which are smooth in its argumets, bounded and $g(\cdot, s ) \sim s$, as $s\to 0$.

\medskip
Nest we consider $ p \mu_{0j}^{\frac{n-2}2} (1+ {\lambda_j \over \mu_{0j} } )^{-2}\int_{B_{2R} }  U^{p-1} ( {\mu_{0j} \over \mu_j}  y) \psi(\xi_j +\mu_{0j} y,t)  Z_{n+1} (y) \, dy.
$ The principal part to describe is $I:= \int_{B_{2R} }  U^{p-1} (  y) \psi(\xi_j +\mu_{0j} y,t)  Z_{n+1} (y) \, dy.$ Recall now that $\psi = \psi [\la , \xi , \dot \la , \dot \xi , \phi] (y,t)$. Thus, we write
\begin{align*}
I&=\psi [0,q,0,0,0] (q_j,t) \,  \int_{B_{2R} }  U^{p-1} (  y)   Z_{n+1} (y) \, dy\\
&+  \int_{B_{2R} }  U^{p-1} (  y) Z_{n+1} (y)   \left( \psi [0,q,0,0,0] (\xi_j + \mu_{0j} y,t ) - \psi [0,q,0,0,0] (q_j,t) \right) \, dy\\
&+  \int_{B_{2R} }  U^{p-1} (  y) Z_{n+1} (y)   \left( \psi [\la,\xi,\dot \la ,\dot \xi ,\phi]  - \psi [0,q,0,0,0] \right) (\xi_j + \mu_{0j} y ,t)  \, dy \\
&= I_1 + I_2 + I_3.
\end{align*}
Thanks to \eqref{leuco1}, $I_1 = \mu_0^{{n-2 \over 2} +\sigma} \, t_0^{-\ve} \,  f(t)$, for some smooth and bounded $f$. Applying the mean value theorem to $I_2$ and using \eqref{leuco11}, we get that $I_2 = \mu_0^{{n-2 \over 2}+\sigma} \, t_0^{-\ve} \, g(t, {\la \over \mu_0} , \xi - q )$,
where $g$ is a smooth function with $g(\cdot , s, \cdot ) \sim s$, and $g(\cdot , \cdot , s) \sim s$, as $s \to 0$. Again the mean value Theorem gives that, for some $s \in (0,1)$,
\begin{align*}
I_3& =
   \int_{B_{2R} }  U^{p-1} (  y) \,  Z_{n+1} (y) \, \Biggl[ \pp_\la \psi [0,q,0,0,0] [s \la ]  (\xi_j + \mu_{0j} y,t)  \\
&+  \pp_\xi \psi [0,q,0,0,0][s(\xi_j - q_j) ]  (\xi_j + \mu_{0j} y ,t) +
 \pp_{\dot \la }\psi [0,q,0,0,0] [s \dot \la ] (\xi_j + \mu_{0j} y ,t)    \\
&+ \pp_{\dot \xi} \psi [0,q,0,0,0][ s \dot \xi ]  ((\xi_j + \mu_{0j} y ,t)
+  \pp_{\phi} \psi [0,q,0,0,0][s \phi]  ((\xi_j + \mu_{0j} y ,t) \Biggl] \, dy.
\end{align*}
Using \eqref{leuco2}--\eqref{leuco3}, we can describe the above function as sum of terms of the form
$$
\mu_0^{-{n-4 \over 2} +\sigma} \, t_0^{-\ve } f(t) (\dot \la + \dot \xi ) F (\la , \xi ,\dot \la , \dot \xi , \phi) (t) , \quad \mu_0^{n-4 \over 2} \, t_0^{-\ve} \,  f(t) \, (\la + \xi )
F (\la , \xi ,\dot \la , \dot \xi , \phi) (t) ,
$$
where $f$ is smooth and bounded, while $F$ denotes a non local, non linear smooth
operator in its parameters, with $F(0,q,0,0,0) (t) $ bounded.

\medskip
Let us consider now the terms $B_j [\phi_j ]$ and $B_j^0 [\phi_j]$ defined respectively by \eqref{gajardo1} and \eqref{gajardo2}. We have that
$$
\int_{B_{2R}} B_j [\phi_j ] (y,t) \, Z_{n+1} (y) \, dy = t_0^{-\ve } \Bigl[ \mu_0^{n-3 +\sigma} (t) \ell [\phi] (t) + \dot \xi_j \ell [\phi] (t)\Bigl]
$$
and
$$
\int_{B_{2R}} B_j^0 [\phi_j ] (y,t) \, Z_{n+1} (y) \, dy =t_0^{-\ve} \mu_0^{n-2 +\sigma} (t)
\, g ({\la  \over \mu_0} ) \, \ell [\phi] (t)
$$
where $\ell [\phi] (t)$ is a smooth and bounded function in $t$, and is  a linear, and non local smooth operator in $\phi$, while $g (s)$ is a smooth function with $g(s) \sim s$, as $s\to 0$.

Collecting the above information, we get the validity of \eqref{gajardo7}.

\end{proof}

\medskip
\medskip
Next we shall compute, for any $j=1, \ldots , k$,
$$
\int_{B_{2R} } H_j [\phi , \la , \xi , \dot \la , \dot \xi] (y,t (\tau ) )  Z_{\ell} (y)  dy,
$$
this time for $\ell =1 , \ldots , n$. This is the content of next Lemma, whose proof we leave to the interested reader, being similar to the one performed in Lemma \ref{calcolo1}.

\medskip

\begin{lema}\label{calcolo2}
Let us now fix $j=1, \ldots ,k$. There exists a positive constant $\ve>0$, such that the relation \eqref{gajardo3} for $\ell = 1, \ldots , n$ is equivalent to
\begin{align}
\label{gajardo8}
\dot \xi_j &=\Pi_{2,j} [\la, \xi , \dot \la , \dot \xi,  \phi ] (t)
\end{align}
\begin{align}
\label{gajardo88}
\Pi_{2,j} [\la, \xi , \dot \la , \dot \xi,  \phi ] (t)  &=
\mu_0^{n-2} (t)  \,   c \, \big [b_j^{n-2} \nn_x H(q_j ,q_j)   \,  - \,  \sum_{i\ne j}  b_j^{\frac {n-2}2} b_i^{\frac {n-2}2}  \nn_x G( q_j,q_i )\, \big ]   \\
&+\mu_0^{n-2+\sigma } (t)  f_j(t) + \, t_0^{-\ve} \,   \Theta  [\dot \la , \dot \xi , \mu_0^{n-4} \la , \mu_0^{n-3} (\xi-q) , \mu_0^{n-3+\sigma} \phi ] (t)  \nonumber
\end{align}
where $c= {p \int_{\R^n} U^{p-1} {\partial U \over \partial y_1} y_1 \, dy \over \int_{\R^n} \left(   {\partial U \over \partial y_1}  \right)^2 \, dy }$ and $b_j$ are the
(positive) numbers defined in \eqref{ee}.  The function
$f_j= f_j (t)$ is an explicit $n$ dimensional vector function,  it is  smooth and bounded for $t \in [t_0 , \infty)$.
Furthermore, $\Theta$ has the same properties
as described in Lemma \ref{calcolo1}.
\end{lema}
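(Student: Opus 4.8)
\medskip
\noindent\textbf{Outline of the proof of Lemma \ref{calcolo2}.}
The plan is to mimic the computation carried out for Lemma \ref{calcolo1}, the only genuine simplification being that all the \emph{radial} components of the error now integrate to zero against $Z_\ell$ for $\ell=1,\dots,n$. I would fix $j\in\{1,\dots,k\}$ and $\ell\in\{1,\dots,n\}$, insert the decomposition \eqref{defHj} of $H_j$, and treat its four summands separately. For the dominant one, $\mu_{0j}^{\frac{n+2}2}S_{\mu,\xi,j}(\xi_j+\mu_{0j}y,t)$, I would use the rewriting from the proof of Lemma \ref{calcolo1}: write $S_{\mu,\xi,j}=\mu_j^{-\frac{n+2}2}\{\mu_{0j}S_1+\lambda_j b_j S_2+\mu_j S_3\}$ with $S_1,S_2,S_3$ as there, perform the change of variables $z=\xi_j+\mu_j y$, and expand $\mu_{0j}/\mu_j=(1+\lambda_j/\mu_{0j})^{-1}$.

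The key point is then that $Z_{n+1}$ and $U^{p-1}$ are radial, so the integrands $S_1(\xi_j+\mu_j y)\,Z_\ell(y)$ and $S_2(\xi_j+\mu_j y)\,Z_\ell(y)$ are odd and hence $\int_{B_{2R}}S_1Z_\ell\,dy=\int_{B_{2R}}S_2Z_\ell\,dy=0$ for $\ell\le n$; in particular the $O(\mu_0^{n-3})$ term inside $S_2$, which drove the $Z_{n+1}$ computation of Lemma \ref{calcolo1}, drops out entirely, and only $S_3$ survives at main order. Using the parity identities
\[
\int_{B_{2R}}\partial_{y_m}U\,\partial_{y_\ell}U\,dy=\delta_{m\ell}\Big(\int_{\R^n}(\partial_{y_\ell}U)^2\,dy+O(R^{2-n})\Big),\qquad p\int_{B_{2R}}U^{p-1}y_m\,\partial_{y_\ell}U\,dy=\delta_{m\ell}\Big(-\int_{\R^n}U^p\,dy+O(R^{-2})\Big),
\]
together with $\mu_j^{n-2}=b_j^{n-2}\mu_0^{n-2}$ and $\mu_j^{\frac{n-2}2}\mu_i^{\frac{n-2}2}=(b_ib_j)^{\frac{n-2}2}\mu_0^{n-2}$, I expect to obtain
\begin{align*}
\mu_{0j}^{-1}\Big(\tfrac{\mu_j}{\mu_{0j}}\Big)^{\frac{n+2}2}&\int_{B_{2R}}\mu_{0j}^{\frac{n+2}2}S_{\mu,\xi,j}(\xi_j+\mu_{0j}y,t)\,Z_\ell(y)\,dy\\
&=\Big(\int_{\R^n}(\partial_{y_\ell}U)^2\,dy\Big)\Big[\dot\xi_{j,\ell}-\mu_0^{n-2}c\,\big(b_j^{n-2}\partial_{y_\ell}H(q_j,q_j)-\sum_{i\ne j}(b_ib_j)^{\frac{n-2}2}\partial_{y_\ell}G(q_j,q_i)\big)\Big]+(\text{error}),
\end{align*}
with $c$ the constant of the statement; note that $p\int_{\R^n}U^{p-1}y_1\,\partial_{y_1}U\,dy=-\int_{\R^n}U^p\,dy$, which fixes the value of $c$ and shows the main term has the advertised form. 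The ``error'' collects the $O(R^{2-n}+R^{-2})$ truncation remainders, the differences $S_\ell(\xi_j+\mu_{0j}y)-S_\ell(\xi_j+\mu_j y)$, and the relative $O(\mu_0^\sigma)$ discrepancy between $\mu_{0j}^{-1}(\mu_j/\mu_{0j})^{\frac{n+2}2}$ and $\mu_{0j}^{-1}$; since $R=t_0^{\rho}$ by \eqref{defR}, each has the size $\mu_0^{n-2+\sigma}f_j(t)+t_0^{-\ve}\Theta[\cdots]$ allowed in the statement.

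For the remaining summands of $H_j$ I would argue exactly as in Lemma \ref{calcolo1}. In the term $p\,\mu_{0j}^{\frac{n-2}2}\tfrac{\mu_{0j}^2}{\mu_j^2}U^{p-1}(\tfrac{\mu_{0j}}{\mu_j}y)\,\psi(\xi_j+\mu_{0j}y,t)$ oddness helps once more: the ``frozen'' contribution $\psi[0,q,0,0,0](q_j,t)\int_{B_{2R}}U^{p-1}Z_\ell\,dy$ vanishes identically, so after Taylor expanding $\psi$ about $x=q_j$ in space and about $(\lambda,\xi,\dot\lambda,\dot\xi,\phi)=(0,q,0,0,0)$ in the parameters, and invoking \eqref{leuco1}, \eqref{leuco11} of Proposition \ref{probesterno} and \eqref{leuco2}--\eqref{leuco3} of Proposition \ref{probesterno1}, only contributions of the form $\mu_0^{n-2+\sigma}f_j(t)+t_0^{-\ve}\Theta[\cdots]$ remain. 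The terms $B_j[\phi_j]$ and $B_j^0[\phi_j]$ from \eqref{gajardo1}--\eqref{gajardo2} are handled verbatim as in Lemma \ref{calcolo1}. Multiplying the identity $\int_{B_{2R}}H_jZ_\ell\,dy=0$ by $\mu_{0j}^{-1}(\mu_j/\mu_{0j})^{\frac{n+2}2}$ and by the explicit nonzero constant $\big(\int_{\R^n}(\partial_{y_\ell}U)^2\,dy\big)^{-1}$ then yields \eqref{gajardo8}--\eqref{gajardo88}, and the Lipschitz estimates for $\Theta$ follow term by term from the Lipschitz dependence of $\psi=\Psi[\lambda,\xi,\dot\lambda,\dot\xi,\phi]$ established in Proposition \ref{probesterno1} and Remark \ref{rmk1}, exactly as in Lemma \ref{calcolo1}.

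The step I expect to cost the most effort---though it needs no new idea beyond those already in the proof of Lemma \ref{calcolo1} and the outer estimates of Section \ref{outer}---is the bookkeeping of the $\mu_j$ versus $\mu_{0j}$ discrepancies and of the boundary contributions produced on $\partial B_{2R}$ by the cut-off, i.e. checking that every contribution other than the displayed main term is either an explicit remainder of size $\mu_0^{n-2+\sigma}$ times a bounded function or carries a factor $t_0^{-\ve}$ together with Lipschitz dependence on $(\lambda,\xi,\dot\lambda,\dot\xi,\phi)$, so that it fits into the $\mu_0^{n-2+\sigma}f_j+t_0^{-\ve}\Theta[\cdots]$ structure of \eqref{gajardo88} with $\Theta$ as in \eqref{gajardo111}--\eqref{gajardo11}. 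Taking $\ve$ below $\min\{2\rho,(n-2)\rho,\sigma/(n-4)\}$ makes all these bounds uniform for $t\ge t_0$, and the conclusion of the lemma follows.
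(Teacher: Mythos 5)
Your proposal is correct and follows the route the paper implicitly intends when it says the proof is ``similar to'' that of Lemma \ref{calcolo1} and is left to the reader: you reuse the decomposition $S_{\mu,\xi,j}=\mu_j^{-\frac{n+2}2}\{\mu_{0j}S_1+\lambda_j b_j S_2+\mu_j S_3\}$ from that proof, observe that $S_1$ and $S_2$ are radial so their products with $Z_\ell$ ($\ell\le n$) are odd and integrate to zero on $B_{2R}$, and extract the main term from $S_3$ via the identity $p\int U^{p-1}y_\ell\,\partial_{y_\ell}U=-\int U^p$, which yields exactly the constant $c$ of the statement; the oddness of $U^{p-1}Z_\ell$ likewise annihilates the frozen contribution of $\psi$, and the remaining bookkeeping of $\mu_j$ versus $\mu_{0j}$, truncation errors, and Lipschitz dependence is handled verbatim as in Lemma \ref{calcolo1} via Propositions \ref{probesterno}--\ref{probesterno1}. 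This is the same argument the paper has in mind, correctly fleshed out.
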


\bigskip
In summary: given $\sigma $ the positive number fixed in Proposition \ref{probesterno} and
 $\phi$ satisfying \eqref{rest13}, we have proved so far that solving
$$
\int_{B_{2R} } H_j [\phi , \la , \xi , \dot \la , \dot \xi] (y,t (\tau ) )  Z_{\ell} (y)  dy = 0, \quad {\mbox {for}} \quad t \geq t_0
$$
for all $j=1, \ldots , k$, and $\ell = 1, \ldots , n, n+1$, is equivalent to solve in $\la $ and $\xi$ the system of ODEs
\begin{equation} \label{gajardoo}
\left\{ \begin{array}{r}
\dot \la +  {1\over t} P^T {\rm diag}\, \left ( {1+ \bar \sigma_j \over n-4} \right ) P \la  = \Pi_1 [\la, \xi , \dot \la , \dot \xi,  \phi ] (t)  \\
\\
\dot \xi_j =\Pi_{2,j} [\la, \xi , \dot \la , \dot \xi,  \phi ] (t), \quad j=1, \ldots , k
\end{array} \right.
\end{equation}
where $\Pi_1$, and $\Pi_{2,j} $ are defined respectively in \eqref{gajardo77} and \eqref{gajardo88}.

\medskip
We next prove that Problem \eqref{gajardoo} is solvable for
functions  $\la $ and $\xi $  satisfying \eqref{rest} and \eqref{rest1}.

\begin{prop}\label{sabato}
Let $\phi$ be such that \eqref{rest13} is satisfied. Then there exists a solution
$\la = \la [\phi ] (t) $, $\xi = \xi[\phi ] (t ) $ to the nonlinear system of ODEs \eqref{gajardoo}, which satisfies the bounds \eqref{rest}-\eqref{rest1}. Furthermore, for $t \in (t_0 , \infty)$,
\begin{equation}\label{cacca1}
\mu_0^{-(1+\sigma) }  (t) \Bigl| \la [\phi_1 ] (t) -\la [\phi_2] (t) \Bigl| \lesssim
t_0^{-\ve } \, \| \phi_1 -\phi_2 \|_{ \sigma , a}
\end{equation}
and
\begin{equation}\label{cacca2}
\mu_0^{-1}  (t) \Bigl| \xi [\phi_1 ] (t) -\xi [\phi_2] (t) \Bigl| \lesssim t_0^{-\ve } \,
\| \phi_1 -\phi_2 \|_{\sigma , a}
\end{equation}

\end{prop}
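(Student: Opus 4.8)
The plan is to solve the coupled system \eqref{gajardoo} by a contraction mapping argument in a weighted $C^1$ ball for the pair $(\la,\xi)$, with $\dot\la,\dot\xi$ read off from the equations, using a tailored linear theory for the two operators appearing on the left and the fact that, by Lemmas \ref{calcolo1}--\ref{calcolo2}, the right hand sides decompose as an explicit leading term plus a remainder $\Theta$ which is Lipschitz, with constant $O(t_0^{-\ve})$, in each of its arguments $\dot\la,\dot\xi,\mu_0^{n-4}\la,\mu_0^{n-3}(\xi-q),\mu_0^{n-3+\sigma}\phi$ (these being precisely $O(\mu_0^{n-3+\sigma})$ throughout the admissible class \eqref{rest}--\eqref{rest1}).

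\emph{Linear theory for the $\la$-equation.} Write $M_0:=P^T\,{\rm diag}\big(\tfrac{1+\bar\sigma_j}{n-4}\big)\,P$ with $P$ the orthogonal matrix of \eqref{D2Ib}--\eqref{gajardo6}; since $\mathcal G(q)$ is positive definite, $\bar\sigma_j\ge\bar\sigma>\sigma$ for all $j$ (see \eqref{defsigmabar}, \eqref{defsigma}). Setting $w=P\la$ decouples $\dot\la+\tfrac1t M_0\la=g$ into the scalar equations $\dot w_j+\tfrac{\kappa_j}t w_j=(Pg)_j$ with $\kappa_j=\tfrac{1+\bar\sigma_j}{n-4}$. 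The essential inequality is $\kappa_j>\tfrac{1+\sigma}{n-4}$, which holds exactly because $\bar\sigma_j>\sigma$; it guarantees that the forward Duhamel solution
\[
w_j(t)=t^{-\kappa_j}\int_{t_0}^t s^{\kappa_j}(Pg)_j(s)\,ds
\]
obeys, when $\|g\|_{n-3+\sigma}<\infty$ and $n\ge5$, the bound $|w_j(t)|\lesssim\|g\|_{n-3+\sigma}\,t^{1-\frac{n-3+\sigma}{n-4}}=\|g\|_{n-3+\sigma}\,\mu_0(t)^{1+\sigma}$, the constant depending only on $\bar\sigma-\sigma$. Thus $\mathcal L^{-1}g:=P^Tw$ is a right inverse of $\la\mapsto\dot\la+\tfrac1t M_0\la$ with $\|\mathcal L^{-1}g\|_{1+\sigma}+\big\|\tfrac{d}{dt}\mathcal L^{-1}g\big\|_{n-3+\sigma}\lesssim\|g\|_{n-3+\sigma}$, where for the derivative one uses $\dot w=Pg-\tfrac1t{\rm diag}(\kappa_j)w$ and $\tfrac1t\mu_0^{1+\sigma}\sim\mu_0^{n-3+\sigma}$. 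The homogeneous modes $t^{-\kappa_j}P^Te_j$ all decay at least like $\mu_0^{1+\sigma}$, which accounts for the $k$ free parameters mentioned above; for the existence statement we simply take the particular solution $\mathcal L^{-1}\Pi_1$.

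\emph{The $\xi$-equation, the fixed point, and dependence on $\phi$.} Since by Lemma \ref{calcolo2} $\Pi_{2,j}=\mu_0^{n-2}\,c\,[\cdots]+\mu_0^{n-2+\sigma}f_j+t_0^{-\ve}\Theta[\cdots]$ has $\|\Pi_{2,j}\|_{n-2}\lesssim1$, the equation $\dot\xi_j=\Pi_{2,j}$ is solved by backward integration $\xi_j(t)=q_j-\int_t^\infty\Pi_{2,j}(s)\,ds$; this converges and yields $|\xi_j(t)-q_j|\lesssim\mu_0(t)^2$ and $|\dot\xi_j(t)|\lesssim\mu_0(t)^{n-2}$, which (as $n\ge5$, $\sigma<1$) are even smaller than required by \eqref{rest}--\eqref{rest1}. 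Define now $\mathcal T(\la,\xi)$ to be the pair obtained by applying $\mathcal L^{-1}$ to $\Pi_1[\la,\xi,\dot\la,\dot\xi,\phi]$ and backward-integrating $\Pi_{2,j}[\la,\xi,\dot\la,\dot\xi,\phi]$, with $\dot\la,\dot\xi$ determined by the equations. The leading terms ($\sim\mu_0^{n-3+\sigma}f$ in $\Pi_1$, $\sim\mu_0^{n-2}$ in $\Pi_2$) together with the linear estimates above show that $\mathcal T$ maps the ball $\|\la\|_{1+\sigma}+\|\dot\la\|_{n-3+\sigma}+\|\xi-q\|_{1+\sigma}+\|\dot\xi\|_{n-3+\sigma}\le c$ into itself for $c$ a fixed large constant and $t_0$ large; contractivity, with constant $O(t_0^{-\ve})$, follows from the Lipschitz estimates \eqref{gajardo111}--\eqref{gajardo11} for $\Theta$ and their analogues in Lemma \ref{calcolo2}, using that $\dot\la,\dot\xi$ are themselves controlled by $\la,\xi,\phi$ through the equations and that the nonlocal dependence on $\psi=\Psi[\la,\xi,\dot\la,\dot\xi,\phi]$ is already absorbed, with a factor $t_0^{-\ve}$, inside $\Theta$ via Proposition \ref{probesterno1}. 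The contraction mapping principle produces $\la=\la[\phi]$, $\xi=\xi[\phi]$ satisfying \eqref{rest}--\eqref{rest1}. Finally \eqref{cacca1}--\eqref{cacca2} come from \eqref{gajardo11} and its $\Pi_2$-analogue: the $\phi$-dependence enters the fixed point only through $\Theta[\mu_0^{n-3+\sigma}\phi]$, whose Lipschitz constant in $\phi$ (in the $\|\cdot\|_{\sigma,a}$ norm) is $O(t_0^{-\ve})$, so the induced map $\phi\mapsto(\la[\phi],\xi[\phi])$ is Lipschitz with the stated $\mu_0^{1+\sigma}$- and $\mu_0$-weighted constants.

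\emph{The main obstacle.} The crux is the construction of the right inverse $\mathcal L^{-1}$ of the $\la$-operator with the exact weights: it is precisely the positive definiteness of $\mathcal G(q)$ --- equivalently $\bar\sigma_j>0$ --- that puts every eigenvalue $\kappa_j$ strictly above the threshold $\tfrac{1+\sigma}{n-4}$, so that the particular solution and all homogeneous modes remain in the admissible class $\|\cdot\|_{1+\sigma}\lesssim1$; without this the whole scheme collapses, being forced to a growing, or too slowly decaying, $\la$. The remaining work is careful bookkeeping: closing the estimates simultaneously for $\la,\xi$ and their derivatives in the two different weights, and propagating the nonlocal dependence on $\psi$ and on $\phi$ with the gains supplied by Propositions \ref{probesterno}--\ref{probesterno1} and Lemmas \ref{calcolo1}--\ref{calcolo2}.
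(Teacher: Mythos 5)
Your proposal is correct and takes essentially the same route as the paper: diagonalize the linear $\lambda$-operator via $w=P\lambda$, exploit $\bar\sigma_j>\sigma$ to get the $\mu_0^{1+\sigma}$-decay of the forward Duhamel solution, solve the $\xi$-equation by backward integration from $q_j$, close by contraction, and propagate the $\phi$-Lipschitz estimate through \eqref{gajardo11}. The only cosmetic difference is that the paper formulates the fixed point directly on the pair $(\Lambda,\Xi)=(\dot\lambda,\dot\xi-\dot\xi^0)$ in the $\|\cdot\|_{n-3+\sigma}$-topology (recovering $\lambda,\xi$ by integration), which avoids the slight circularity in your phrase ``with $\dot\lambda,\dot\xi$ determined by the equations'' when checking contractivity; your argument can be tightened by adopting that formulation.
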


\begin{proof}
Let  $h$ be a vector function with $h: [t_0 , \infty) \to \R^k $ with  $\| h \|_{n-3+\sigma} $   bounded. See \eqref{normfinal} for the definition of the norm.  We select the  solution of the linear system of ODEs associated to \eqref{gajardo7}
\be \label{syst1}
\dot \la +  {1\over t} P^T {\rm diag}\, \left ( {1+ \bar \sigma_j \over n-4} \right ) P \la  = h(t)
\ee
explicitly given by
\be\label{solsyst1}
\la (t) = P^T \nu (t) , \quad   \nu (t) =
\begin{pmatrix} \nu_1 (t) \\ \vdots  \\  \nu_k (t)  \end{pmatrix}, \quad  \nu_j (t) =  t^{-{1+ \bar \sigma_j \over n-4}}  \left[ d_j +
  \int_{t_0}^t s^{1+ \bar \sigma_j \over n-4}
\, \left( P h \right)_j (s) \, ds \right] ,
\ee
where the  constants $d_j$, $j=1, \ldots , k$ are arbitrary. Observe that
$$
\| t^{1+\sigma \over n-4} \la (t)  \|_{L^\infty (t_0 , \infty)}  \lesssim t_0^{-{\bar \sigma -\sigma \over n-4}} d
+\| h \|_{n-3+\sigma}
\quad
{\mbox {and}} \quad
\| \dot \la \|_{n-3+\sigma} \lesssim t_0^{-{\bar \sigma_j -\sigma \over n-4}} d
+\| h \|_{n-3+\sigma},
$$
where $0\leq d= \max_{i=1, \ldots , k} |d_i|$.

Let $\Lambda (t) = \dot \la (t)$. Thus \eqref{solsyst1} defines a linear operator ${\mathcal L}_1 : h \to \Lambda$, which associates to any $h$ with $\| h \|_{n-3+\sigma}$ -bounded the solutions   \eqref{solsyst1} to \eqref{syst1}, that now takes the form
$$
\Lambda +  {1\over t} P^T {\rm diag}\, \left ( {1+ \bar \sigma_j \over n-4} \right ) P \int_t^\infty \Lambda (s) \, ds  = h(t).
$$
 This operator ${\mathcal L}_1$ is continuous between the Banach spaces $\left(  L^\infty (t_0  \infty) \right)^k$ equipped with the $\| \cdot \|_{n-3+\sigma}$-topology.

For any $h$, this time $h :[t_0 , \infty) \to \R^n$,  with $\| h \|_{n-3+\sigma} $   bounded, we now select the  solution  of the linear system of ODEs associated to \eqref{gajardo8}, for $j\in \{ 1, \ldots , k\}$,
\be\label{syst2}
\dot \xi_j =
\mu_0^{n-2} (t)  \,   c \, \big [b_j^{n-2} \nn_x H(q_j ,q_j)   \,  - \,  \sum_{i\ne j}  b_j^{\frac {n-2}2} b_i^{\frac {n-2}2}  \nn_x G( q_j,q_i )\, \big ]  +h(t)
\ee
explicitly given by
\be\label{solsyst2}
\xi_j (t) =  \xi_j^0 (t) +  \int_{t}^\infty h(s) \, ds,
\ee
where
$$  \xi_j^0 (t) = q_j + c \, \big [b_j^{n-2} \nn_x H(q_j ,q_j)   \,  - \,  \sum_{i\ne j}  b_j^{\frac {n-2}2} b_i^{\frac {n-2}2}  \nn_x G( q_j,q_i )\, \big ] \int_t^\infty \mu_0^{n-2} (s)  \,   ds.
$$
Observe that
$$
| \xi_j (t) -q_j   | \lesssim  t^{-{1+\sigma \over n-4}}
+t^{-{1+\sigma \over n-4}} \, \| h \|_{n-3+\sigma}
\quad
{\mbox {and}} \quad
\| \dot \xi_j -\dot \xi^0_j \|_{\sigma} \lesssim
\| h \|_{n-3+\sigma}.
$$
Let $\Xi (t) = \dot \xi (t) -\dot \xi^0 (t) $, which is a $n\, k $-dimensional vector function. Thus \eqref{solsyst2} defines a linear operator ${\mathcal L}_2 : h \to \Xi$, which associates to any $n\times k$-dimensional vector function $h$ with $\| h \|_{n-3+\sigma}$-bounded  the solutions   \eqref{solsyst2} to \eqref{syst2}.  This operator is continuous in the $\| \cdot \|_{n-3+\sigma}$-topology.

\medskip
Having introduced the linear operators ${\mathcal L}_i$, $i=1,2$, we observe that
$\la $, $\xi$ is a solution to \eqref{gajardo7}-\eqref{gajardo8} if $\Lambda =\dot \la$, $\Xi = \dot \xi - \dot \xi^0$ is a fixed point for the operator
\be\label{h2o}
\left( \Lambda , \Xi \right) = {\mathcal A} \left(\Lambda , \Xi  \right)
\ee
where
$$
 {\mathcal A} \left(\Lambda , \Xi  \right) := \left({\mathcal L}_1 (\hat \Pi_1
[ \Lambda ,  \Xi,  \phi ]  ) , {\mathcal L}_2 (\hat \Pi_2 [ \Lambda ,  \Xi,  \phi ]  ) \right)
=: \left(\bar A_1 (\Lambda , \Xi ) , \bar A_2 (\Lambda , \Xi)  \right)
$$
with
$$
\hat \Pi_1 (\Lambda , \Xi , \phi  ) = \Pi_1
[\int_t^\infty \Lambda , q+ \int_t^\infty \Xi  ,  \Lambda ,  \Xi,  \phi ]  )
, \quad
\hat \Pi_2 (\Lambda , \Xi , \phi ) = \Pi_2
[\int_t^\infty \Lambda , q+ \int_t^\infty \Xi  ,  \Lambda ,  \Xi,  \phi ]  )
$$
and
 $\ \Pi_1$, $ \Pi_2$ defined respectively in \eqref{gajardo77} and \eqref{gajardo88}.
Let
$$
K = \max \{ \|  f \|_{n-3+\sigma} ,   \| f_1 \|_{n-3+\sigma} , \ldots ,  \| f_k \|_{n-3+\sigma }\}
$$
where the functions $f$, $f_1, \ldots , f_k$ are the ones in \eqref{gajardo77} and \eqref{gajardo88}.
We show that ${\mathcal A}$ \eqref{h2o}  has a fixed point $(\Lambda , \Xi)$
in the set
$$
{\mathcal B} = \{ (\Lambda , \Xi ) \in L^{\infty} (t_0 , \infty ) \times L^{\infty} (t_0 , \infty )\, : \, \| \Lambda \|_{n-3+\sigma} + \| \Xi \|_{n-3+\sigma }\leq  cK \},
$$
for some $c>0$.
Indeed, we observe directly from \eqref{gajardo77} that
\begin{align*}
\left| t^{n-3 +\sigma \over n-4} \bar A_1 (\Lambda , \Xi ) \right| & \lesssim
t_0^{-{\bar \sigma - \sigma \over n-4} } d + \| \phi \|_{n-2+ \sigma , a} +K
+ t_0^{-\ve} \| \Lambda \|_{n-3+\sigma} + t_0^{-\ve} \| \Xi \|_{n-3+\sigma} ,
\end{align*}
and from \eqref{gajardo88}
\begin{align*}
\left| t^{n-3 +\sigma \over n-4} \bar A_2 (\Lambda , \Xi ) \right| & \lesssim
 \| \phi \|_{n-2+ \sigma , a} + K
+ t_0^{-\ve} \| \Lambda \|_{n-3+ \sigma} + t_0^{-\ve} \| \Xi \|_{n-3+\sigma} .
\end{align*}
Thus, for $d$ with $t_0^{-{\bar \sigma - \sigma \over n-4} } d <K$, and choosing possibly $c$ large, we have that ${\mathcal A} \left( {\mathcal B} \right) \subset {\mathcal B}
$. Let us now check the Lipschitz condition for ${\mathcal A}$. For instance, we have
\begin{align*}
& t^{n-3 +\sigma \over n-4} \left| \bar A_1 (\Lambda_1 , \Xi ) -
\bar A_1 (\Lambda_2 , \Xi ) \right| = t^{n-3 +\sigma \over n-4} \left| {\mathcal L}_1 (\hat \Pi_1
[ \Lambda_1 ,  \Xi,  \phi ]  -  \hat \Pi_1
[ \Lambda_2 ,  \Xi,  \phi ]  )  \right| \\
&\leq  t^{n-3 +\sigma \over n-4}  t_0^{-\ve} \left| {\mathcal L}_1 (  \Theta_2 (\Lambda_1 , \Xi ) - \Theta_2 (\Lambda_2 , \Xi ) ) \right| \\
& +  t^{n-3 +\sigma \over n-4}   t_0^{-\ve}[
\left| {\mathcal L}_1 (  \mu_0^{n-4}\Theta_3 (\int_t^\infty \Lambda_1 , \Xi ) - \Theta_3 (\int_t^\infty \Lambda_2 , \Xi ) )
  \right|\\
& \leq t_0^{-\ve } \| \Lambda_1 - \Lambda_2 \|_{n-3+\sigma},
\end{align*}
as consequence of \eqref{gajardo111} and \eqref{gajardo113}.
Arguing with the same logic, one gets a similar estimation for $ \bar A_1 (\Lambda_1 , \Xi ) -
\bar A_1 (\Lambda_2 , \Xi )$.
Making use of \eqref{gajardo111}--\eqref{gajardo114}, we get
$$
\| {\mathcal A} (\Lambda_1 , \Xi_1 ) - {\mathcal A} (\Lambda_2 , \Xi_2 ) \|_{n-3+\sigma} \leq t_0^{-\ve} \|\Lambda_1 - \Lambda_2 \|_{n-3+\sigma}.
$$
Since $t_0^{-\ve}$, ${\mathcal A}$ is a contraction, and a direct application of Contraction Mapping Theorem, we get the existence of a solution for System \eqref{gajardoo}, in the class of functions $\la $ and $\xi$ satisfying \eqref{rest} and \eqref{rest1}.

\medskip
Let us now fix $\phi_1$ and $\phi_2$ in the class of functions satisfying \eqref{rest13}.
The functions $\bar \la  = \lambda [\phi_1 ] - \lambda [\phi_2]$, $\bar \xi =
\xi [\phi_1 ] -\xi [\phi_2]$ solve the system of ODEs
$$
\dot \la +  {1\over t} P^T {\rm diag}\, \left ( {1+ \bar \sigma_j \over n-4} \right )  P \la  = \bar \Pi_1  (t), \quad
\dot \xi_i =\bar \Pi_{2,i}  (t), \quad i=1, \ldots , k
$$
where
\begin{align*}
\left( \bar \Pi_1 (t) \right)_j &= c  p \mu_j^{n-2 \over 2} \mu_{0j}^{-1} \int_{B_{2R}}
   U^{p-1} ( {\mu_{0j} \over \mu_j}  y) \left[  \psi [\phi_1]  -
 \psi [\phi_2] \right] (\xi_j +\mu_{0j} y,t) Z_{n+1} (y) \, dy\\
 & + c \left( {\mu_j \over \mu_{0j} } \right)^{n+2 \over 2} \, \mu_{0j}^{-1} \int_{B_{2R}}  \left[ B_j[(\phi_1)_j] -  B_j[(\phi_2)_j] \right] Z_{n+1} (y) \, dy \\
 & + c \left( {\mu_j \over \mu_{0j} } \right)^{n+2 \over 2} \, \mu_{0j}^{-1} \int_{B_{2R}}  \left[ B_j^0 [(\phi_1)_j] -  B_j^0 [(\phi_2)_j] \right] Z_{n+1} (y) \, dy
\end{align*}
and
\begin{align*}
\left( \bar \Pi_{2i} (t) \right)_j &= c  p \mu_j^{n-2 \over 2} \mu_{0j}^{-1} \int_{B_{2R}}
   U^{p-1} ( {\mu_{0j} \over \mu_j}  y) \left[  \psi [\phi_1]  -
 \psi [\phi_2] \right] (\xi_j +\mu_{0j} y,t) {\partial U \over \partial y_i} (y) \, dy\\
 & + c \left( {\mu_j \over \mu_{0j} } \right)^{n+2 \over 2} \, \mu_{0j}^{-1} \int_{B_{2R}}  \left[ B_j[(\phi_1)_j] -  B_j[(\phi_2)_j] \right] {\partial U \over \partial y_i}  (y) \, dy \\
 & + c \left( {\mu_j \over \mu_{0j} } \right)^{n+2 \over 2} \, \mu_{0j}^{-1} \int_{B_{2R}}  \left[ B_j^0 [(\phi_1)_j] -  B_j^0 [(\phi_2)_j] \right] {\partial U \over \partial y_i} (y) \, dy
\end{align*}
Thus, the validity of \eqref{cacca1} and \eqref{cacca2} follows from  \eqref{gajardo11}.

\end{proof}

\begin{remark}\label{rmk2}
Recall that the function $ \psi = \bar \Psi [\psi_0]$ solution to Problem \eqref{equpsi1}
depends smoothly on the initial condition $\psi_0$, provided $\psi_0$ belongs to a small
neighborhood of $0$ in the Banach space $L^\infty (\Omega)$ equipped with the $C^1$ norm, as observed in Remark \ref{rmk1}. This fact implies that also $\la = \la[\psi_0] $,
$\xi = \xi [\psi_0]$ solutions to \eqref{gajardoo} depends smoothly on $\psi_0$. A closer look at the definitions of $\la = \la[\psi_0] $,
$\xi = \xi [\psi_0]$ gives that
$$
\| \la [\psi_0^{(1)} ] - \la [\psi_0^{(2)} ] \|_{1+\sigma} \lesssim
\| \psi_0^{(1)}  - \psi_0^{(1)}  \|_{L^\infty (\Omega) }
$$
and
$$
\| \xi [\psi_0^{(1)} ] - \xi [\psi_0^{(2)} ] \|_{1+\sigma} \lesssim
\| \psi_0^{(1)}  - \psi_0^{(1)}  \|_{L^\infty (\Omega) }.
$$

\end{remark}

\setcounter{equation}{0}
\section{Final argument: solving \eqref{equ3332}} \label{final}

Let $\psi = \Psi [ \la , \xi , \dot \la , \dot \xi , \phi ] $ be the solution to the outer Problem \eqref{equpsi1} (or equivalently \eqref{equpsi}) as described in Propositions \ref{probesterno} and \ref{probesterno1}, and let $\la = \la [\phi]$, and $\xi = \xi[\phi]$ be the solution to \eqref{gajardoo} whose existence and properties are established in Proposition \ref{sabato}. Then
$u_{\mu , \xi } + \tilde \phi$, with $\tilde \phi$ as in \eqref{deftildephi}-\eqref{martin1}, is the expected solution to \eqref{equ1}, as described in Section \ref{sec3new},  if there exists  $\phi_j$ solution to Problem \eqref{equ3332} in the class of functions
with $\| \phi \|_{\nu , a}$ (see \eqref{minchia}), or equivalently $\| \phi \|_{ n-2+ \sigma , a}$ (see \eqref{starstar}), bounded.

Proposition \ref{prop000} states the existence of a linear operator ${\mathcal T}$ which to any function $h(y, \tau )$, with $\| h \|_{\nu , a} $-bounded, associate the solution to \eqref{p110nuovo}. Furthermore, it states that this operator ${\mathcal T}$ is continuous between $L^\infty$ spaces equipped with the topologies described by \eqref{cta1}. Thus, the existence and properties of $\phi_j$, solution to \eqref{equ3332}  are reduced to find a fixed point for
$$
\phi = (\phi_1 , \ldots , \phi_k ) = {\mathcal A}_3 (\phi ) := \left( {\mathcal T}  (H_1 [ \la , \xi , \dot \la , \dot \xi , \phi ] ), \ldots , {\mathcal T}  (H_k [ \la , \xi , \dot \la , \dot \xi , \phi ] ) \right)
$$
in a proper set of functions.
We claim that ${\mathcal A}_3$ admits indeed a fixed point in the set of functions
$
\| \phi \|_{ n-2+ \sigma , a} < c t_0^{-\ve}.
$
To prove this, we claim that, for each $j=1, \ldots , k$,
\begin{align}\label{ultimo1}
\left| H_j [  \la , \xi , \dot \la , \dot \xi , \phi ] (t,y) \right| \lesssim t_0^{-\ve } \,  {\mu_0^{n-2+\sigma} \over 1+ |y|^{2+a}}
\end{align}
for some $a\in (0,1)$, and
\begin{equation}
\label{ultimo2}
\| H_j [\phi^{(1)}] - H_j [\phi^{(2)}] \|_{n-2+ \sigma , 2+a} \leq t_0^{-\ve } \| \phi^{(1)} - \phi^{(2)} \|_{n-2+\sigma , a}.
\end{equation}
Estimate \eqref{ultimo1} is consequence of Lemma \ref{lemaerror}, \eqref{leuco1}, \eqref{gajardo1} and \eqref{gajardo2}.

To get  \eqref{ultimo2} we observe that
\be\label{ultimo3}
\mu_{0j}^{\frac{n+2}2}  \left| S_{\mu_1 ,\xi_1 ,j}  (\xi_{j,1} +\mu_{0j} y,t)
- S_{\mu_2 ,\xi_2 ,j}  (\xi_{j,2} +\mu_{0j} y,t) \right| \lesssim t_0^{-\ve} {\mu_0^{n-2+\sigma} (t) \over 1+ |y|^{2+a} } \, \| \phi^{(1)} - \phi^{(2)} \|_{n-2+\sigma , a}
\ee
as consequence of \eqref{we2}, \eqref{cacca1} and \eqref{cacca2}, where we have use the notation
$$
\mu_i = \mu [\phi^{(i)} ] , \quad \xi_i = \xi [\phi^{(i)} ] , \quad \xi_{j,i} = \xi_j [\phi^{(i)}  ],  \quad i=1, 2.
$$
Furthermore, we have
\begin{align} \label{ultimo4}
p \mu_{0j}^{\frac{n+2}2} & \left|  \mu_{j, 1}^2  U^{p-1} ( {\mu_{0j} \over \mu_{j,1}}  y) \psi [\phi^{(1)} ] (\xi_{j,1} +\mu_{0j} y,t) -
 \mu_{j, 2}^2  U^{p-1} ( {\mu_{0j} \over \mu_{j,2}}  y) \psi [\phi^{(2)} ] (\xi_{j,2} +\mu_{0j} y,t) \right| \nonumber \\
& \lesssim t_0^{-\ve} {\mu_0^{n-2+\sigma} (t) \over 1+ |y|^{2+a} } \, \| \phi^{(1)} - \phi^{(2)} \|_{\sigma , a}
\end{align}
as consequence of \eqref{leuco2}--\eqref{leuco3}, where we use the notation
$$
\mu_{j,i} = \mu_j [\phi^{(i)}], \quad \psi [\phi^{(i)} ] =
\Psi [\la_i , \xi_i , \dot \la_i , \dot \xi_i , \phi^{(i)}] , \quad i=1,2.
$$
Finally, directly from the definitions \eqref{gajardo1} and \eqref{gajardo2} respectively, we have
\be \label{ultimo5}
\left| B_j [\phi^{(1)} ] -B_j [\phi^{(2)}] \right| \lesssim t_0^{-\ve} {\mu_0^{n-2+\sigma} (t) \over 1+ |y|^{2+a} } \, \| \phi^{(1)} - \phi^{(2)} \|_{n-2+\sigma , a},
\ee
and
\be \label{ultimo6}
\left| B_j^0 [\phi^{(1)} ] -B_j^0  [\phi^{(2)}] \right| \lesssim t_0^{-\ve} {\mu_0^{n-2+\sigma} (t) \over 1+ |y|^{2+a} } \, \| \phi^{(1)} - \phi^{(2)} \|_{n-2+\sigma , a}.
\ee
Estimates \eqref{ultimo3}--\eqref{ultimo6} give \eqref{ultimo2}.

Now using \eqref{ultimo1} and \eqref{ultimo2}, one gets that ${\mathcal A}_3$ has a fixed point $\phi$, with $
\| \phi \|_{ n-2+\sigma , a} < c t_0^{-\ve},
$
for some positive constant $c$, provided the number $\rho$ in \eqref{defR} is chosen small enough. Authomatically, this defines $e_0 = (e_{01} , \ldots , e_{0k})$,  where $e_{0,j}$ are the constants in the initial conditions in Problem \eqref{equ3332}. Each constant $e_{0j}$ is a linear functions of $H_j$, $e_{0j} = e_{0j} [H_j [\phi , \la , \xi , \dot \la , \dot \xi] ]$.  One has
that $|e_{0j} | \lesssim t_0^{-\ve}$.
This concludes the proof of the existence of the solution to Problem \eqref{P}, as predicted by Theorem \ref{teo1}. The proof is concluded. \qed

\bigskip
\subsection*{Proof of Corollary \ref{corol1}}
We begin by some comments that carry interest in their own sake and that are needed for the proof.

\medskip
Let us  observe that the construction of $\phi = (\phi_1 , \ldots , \phi_k )$, and
$e_0 = (e_{01} , \ldots ,e_{0k} )$ solution to \eqref{equ3332} is possible for any initial condition $\psi_0$ to the outer Problem \eqref{equpsi1}. We have the validity of Lipschitz dependence of  $\phi = \phi [\psi_0 ] $, and $e_0 = e_0 [\psi_0 ]$ in the $C^1(\bar\Omega)$-topology. Indeed, Remarks \ref{rmk1} and \ref{rmk2} give that
$$
|e_0 [\psi_0^{(1)} ] - e_0 [\psi_0^{(1)} ] | \leq c \left[
\| \psi_0^{(1)}  - \psi_0^{(1)}  \|_{L^\infty (\Omega)} +
\| \nabla \psi_0^{(1)}  - \nabla \psi_0^{(1)}  \|_{L^\infty (\Omega)}\right]
$$
for some fixed constant $c$. Moreover, as a 
consequence of the Implicit Function Theorem the maps $\phi [\psi_0 ] $, and $e_0 [\psi_0 ]$ depends in $C^1$-sense on $\psi_0$ in the $C^1(\bar\Omega)$-topology, thanks to the corresponding dependence for  $\psi$, $\la$ and $\xi$.

\medskip 
A second observation we make is that a slight change in the above proof allows to find an interesting variation of our main result: There is a manifold of initial conditions indexed by initial values of the parameters $\mu$ and by the centers $\xi_j(t_0)$, $j=1,\ldots, k$. In the proof we can prescribe their initial values as the corresponding $q_j$'s, letting freedom in the final points
$\xi_j(+\infty)$ which in any case is close to the $q_j$. We can achieve this by simply replacing the operator \equ{solsyst2} by the formula 

\be\label{solsyst22}
\xi_j (t) =  q_j + c \, \big [b_j^{n-2} \nn_x H(q_j ,q_j)   \,  - \,  \sum_{i\ne j}  b_j^{\frac {n-2}2} b_i^{\frac {n-2}2}  \nn_x G( q_j,q_i )\, \big ] \int_{t_0}^t \mu_0^{n-2} (s)  \,   ds.+  \int_{t_0}^t h(s) \, ds.
\ee
The consequence of this observation is the following.
If we consider the initial datum in \equ{P} of the form 
$$
u(\cdot,0)   =   u^*_{\ve , q}(\cdot ,0)  +   \psi_0 -  \sum_{j=1}^k  e_{0j} [\psi_0]  Z_{0j} 
$$
where $ u^*_{\mu, \xi }(x,t)$ is the first approximation 
given by \equ{bb1}  with fixed $\xi(0) =q$ and $\mu(0) = \ve$,  $Z_{0j}(x) =  \ve_j^{-\frac{n-2}2} \eta_{j,R} \left (\frac { x-q_j}{\ve_j} \right) $,
 and $\psi_0$  is sufficiently small, then $u(x,t)$ blows-up at $k$ points $\ttt q_j$  in the form \equ{forma1}.

Let us consider the following map defined  in a small neighborhood of 0 in  $X= C^1(\bar\Omega)$.
$$
F (\psi_0) =    \psi_0 -  \sum_{j=1}^k  (e_{0j} [\psi_0]- e_{0j}[0])  Z_{0j}
$$
so that $F[0]=0$, $F$ is differentiable and
$$
D_{\psi_0} F (0) [h]  =  h -  \sum_{j=1}^k  \langle  D_{\psi_0} e_{0j} [0], h \rangle  Z_{0j}, \quad h\in X. 
$$
We have a $k$-bubble blow-up as $t\to +\infty$  provided that 
\be\label{pq}
u(\cdot,0)   =   u^*_{\ve , q}(\cdot ,0) -  \sum_{j=1}^k  e_{0j} [0]  Z_{0j}    + g 
\ee
where $g= F[\psi_0] $ for any small $\psi_0$.

Let us assume  that the vector space of the functionals in $X$  $D_{\psi_0} e_{0j} [0]$ has dimension $\ttt k \le k$.  
We write $W= \bigcap_{j=1}^k {\rm  Ker}\, (D_{\psi_0} e_{0j} [0])$, so that $Z$ is a space with codimension $\ttt k$. Indeed,  
we can find $\ttt k$ linearly independent functions $e_j$ such that 
$$
X =     W\oplus  < \{e_1,\ldots,  e_{\ttt k}\}>. 
$$
We consider the operator in a neighborhood of $0$ in  $X$ given  by 
$$
G\big( w+  \sum_{j=1}^{\ttt k}  \alpha_j e_j \big ) \ =\   \sum_{j=1}^{\ttt k}  \alpha_j e_j +  F(w) , \quad \alpha_j\in \R,\quad w\in W.  
$$ 
Then $G$ is of class $C^1$ near the origin, $G(0)=0$  and  $D_{\psi_0} G(0) [h] = h$. By the local inverse theorem, $G$ defines a local $C^1$ diffeormorphism onto a neighborhood of the origin. For all small $g$ we can find smooth functions $\alpha(g)$, $w(g)$ with 
$$\sum_{j=1}^{\ttt k}  \alpha_j(g) e_j +  F(w(g)) = g. $$ Thus the set  $\mathcal M$ of functions $F[w]$, $w\in W$ can be described in a neighborhood of $0$ exactly as those $g\in X$
such that 
$$
\alpha_j (g) = 0 \foral  j=1,\ldots, \ttt k,  
$$  
in other words the space of $g$'s that satisfy $\ttt k$ $C^1$-constraints which in addition have linearly independent derivatives at 0. This says precisely that $\mathcal M$  is locally  
a codimension $\ttt k$ $C^1$-manifold, such that if $g$ in \equ{pq} is selected there, then the desired phenomenon takes place. Selecting a $k$-codimensional submanifold  of $\mathcal M$,
the result of the corollary follows. The proof is concluded. \qed

\bigskip

\setcounter{equation}{0}
\section{Linear theory for the inner problem} \label{seclineartheory}

The key linear ingredient in the proof above is the presence of the linear operator in Proposition \ref{prop000}. This section will be devoted to prove that result, in its more precise form of Proposition \ref{prop0} below. 
  
\medskip
Given a sufficiently large number $R>0$
we shall construct a solution to an initial value problem of the form
\be \label{p110}
\phi_\tau  =
\Delta \phi + pU(y)^{p-1} \phi + h(y,\tau )  \inn B_{2R} \times (\tau_0, \infty )
\ee
$$
\phi(y,\tau_0) = e_0Z_0(y)  \inn B_{2R}
$$
which defines a linear operator of $h$,
where $e_0= e_0[h]$ is a constant that defines a linear functional of $h$.
No boundary conditions are specified, while suitable time-space decay rates and orthogonality conditions are imposed on $h$. Let $\nu >0$ be such that
$$
\mu_0^{n-2 +\sigma} (t) = \tau^\nu,
$$
and $a$ the positive number, less than 1, which was introduced in \eqref{rest13}.
The solution we build has $R$-dependent uniform bounds in  $L^\infty$-weighted norms of the type
$$
\|h\|_{\nu, a} := \sup_{\tau > \tau_0 }  \sup _{y\in B_{2R}}   \tau^{\nu} (1+ |y|^a ) \, |h( y ,\tau ) | .
$$
Also, for a function $p=p(\tau)$ we denote
$$
\|p\|_{\nu}:= \sup_{\tau > \tau_0 } \tau^{\nu} |p(\tau ) | .
$$

\medskip
As we shall see, our construction involves solving the equation in different spherical harmonic modes.
Let us consider an orthonormal basis $\Theta_m$, $m=0,1,\ldots,$ of  $L^2(S^{n-1})$ made up of spherical harmonics, namely eigenfunctions of the problem
$$
\Delta_{S^{n-1}} \Theta_m + \la_m \Theta_m = 0 \inn S^{n-1}
$$
so that $$0=\la_0 < \la_1 =\ldots= \la_n = (n-1) < \la_{n+1} \le \ldots $$
We have $\Theta_0(y) = \alpha_0 $ and $\Theta_j(y) = \alpha_1{y_j}$, $j=1,\ldots, n$, for constant numbers $\alpha_0$ and $\alpha_1$. In general, all eigenvalues $\la_m$ are of the form $\ell (n-2+\ell)$ for a nonnegative integer $\ell$.

Let $h\in L^2(B_{2R})$. We decompose it into the form
$$
h(y,\tau) =     \sum_{j=0}^\infty h_j(r,\tau)\Theta_j (y/r), \quad r=|y|, \quad h_j(r,\tau) = \int_{S^{n-1}} h (r\theta , \tau ) \Theta_j(\theta) \, d\theta.
$$
In addition, we write $h = h^0 + h^1 + h^\perp$ where
$$
h^0 =  h_0(r,\tau), \quad h^1 = \sum_{j=1}^n h_j(r,\tau)\Theta_j, \quad h^\perp = \sum_{j=n+1}^\infty h_j(r,\tau)\Theta_j.
$$
Consider also the analogous decomposition for $\phi$ into $\phi = \phi^0 + \phi^1 + \phi^\perp$. It clearly suffices to build the solution $\phi$ of Problem \equ{p110} by doing so separately for the pairs
$(\phi^0,h^0)$, $(\phi^1, h^1)$ and $(\phi^\perp, h^\perp)$.

\medskip
Our main result in this section is the following proposition.

\begin{prop} \label{prop0}
Let $\nu,a$ be given positive numbers with $0<a <1$. Then, for all sufficiently large $R>0$ and any  $h=h(y,\tau)$  with  $\|h\|_{\nu, 2+a} <+\infty$
that satisfies for all $j=1,\ldots, n+1$
\be
 \int_{B_{2R}} h(y ,\tau)\, Z_{j} (y) \, dy\ =\ 0  \foral \tau\in (\tau_0, \infty)
\label{ortio}\ee
there exist  $\phi = \phi [h]$  and $e_0 = e_0 [h]$ which solve Problem $\equ{p110}$. They define linear operators of $h$
that satisfy the estimates
\be
  |\phi(y,\tau) |  \ \lesssim   \  \tau^{-\nu} \Big [\, \frac {R^{n-a}} { 1+ |y|^{n}} \,   \|h^0\|_{\nu, 2+a}   +  \frac {R^{n+1-a}} { 1+ |y|^{n+1}} \,   \|h^1\|_{\nu, 2+a} +
  \frac 1{1+ |y|^a} \|h \|_{\nu, 2+a}\Big ] ,
\label{cta1}\ee
and
\be
 | e_0[h]| \, \lesssim \,  \|h\|_{ \nu, 2+a}.
\label{cot2}\ee
\end{prop}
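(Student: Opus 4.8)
The plan is to exploit the rotational symmetry of $U$ and of $L_0=\Delta+pU^{p-1}$ to reduce \eqref{p110} to a family of essentially one-dimensional problems, one per spherical harmonic mode, and to build $\phi$ and $e_0$ mode by mode. With the decomposition $h=h^0+h^1+h^\perp$ and $\phi=\phi^0+\phi^1+\phi^\perp$ just introduced, it suffices to treat the pairs $(\phi^0,h^0)$, $(\phi^1,h^1)$, $(\phi^\perp,h^\perp)$ separately and to add the bounds; since every step — projections onto $Z_0,\dots,Z_{n+1}$, variation of parameters / Duhamel, and an explicit choice of the amplitude $e_0$ — is linear in $h$, the operators $\phi[h]$ and $e_0[h]$ obtained this way are automatically linear. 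For definiteness I would solve each modal problem on $B_{2R}$ with zero Dirichlet datum on $\partial B_{2R}$: this is consistent with the prescribed initial condition $e_0Z_0$, which is exponentially small at $|y|=2R$, and costs nothing, since the target bound \eqref{cta1} is itself only $\lesssim\tau^{-\nu}R^{-a}\|h\|_{\nu,2+a}$ at the boundary.

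For $h^\perp$ (modes $\ell\ge2$) the reduced operator $L_0^{(\ell)}$ is negative definite: the whole $(n+1)$-dimensional kernel of $L_0$ lives in modes $0$ and $1$, and the unique negative eigenvalue $\la_0$ of $L_0(\cdot)+\la(\cdot)=0$ has a radial eigenfunction $Z_0$. Hence no orthogonality is needed and there is no contribution to $e_0$: one solves forward in time from zero data, coercivity giving exponential decay of the free evolution, and a supersolution $c\,\tau^{-\nu}p(|y|)$, with $p>0$ solving $-L_0^{(\ell)}p\ge(1+|y|)^{-2-a}$ and $p\sim(1+|y|)^{-a}$, yielding $|\phi^\perp|\lesssim\tau^{-\nu}(1+|y|)^{-a}\|h^\perp\|_{\nu,2+a}$, the last term in \eqref{cta1}. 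Mode $1$ is the first place \eqref{ortio} enters: the kernel there is spanned by $Z_1,\dots,Z_n$, and using $\int_{B_{2R}}h\,Z_j=0$ I would solve the reduced parabolic equation in the $L^2$-complement of the near-kernel direction (where $L_0^{(1)}$ is invertible) and then subtract the correct multiple of the homogeneous solution. Because the two homogeneous solutions of the radial mode-$1$ equation grow/decay like $r$ and $r^{1-n}$ at infinity, this subtraction improves the decay to $|\phi^1|\lesssim\tau^{-\nu}R^{n+1-a}(1+|y|)^{-(n+1)}\|h^1\|_{\nu,2+a}$; the factor $R^{n+1-a}$ reflects the near-degeneracy of $L_0^{(1)}$ on $B_{2R}$ as $R\to\infty$ and is exactly why a purely local estimate is not available.

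Mode $0$ carries the two genuinely delicate features. First, $Z_{n+1}$ is in the kernel (homogeneous radial solutions $\sim1$ and $\sim r^{2-n}$ at infinity), and $\int_{B_{2R}}h\,Z_{n+1}=0$ lets me repeat the mode-$1$ argument to get the $R^{n-a}(1+|y|)^{-n}$ decay. Second, and more importantly, the unstable direction $Z_0$ must be neutralized. Projecting the equation onto $Z_0$ produces the scalar ODE $\dot e-|\la_0|e=f(\tau)$, with $f(\tau)$ the $Z_0$-component of $h$; the unique solution that does not grow exponentially is obtained by forcing
$$
e_0:=e(\tau_0)=-\int_{\tau_0}^\infty e^{-|\la_0|(s-\tau_0)}f(s)\,ds ,
$$
which is a bounded linear functional of $h$ with $|e_0|\lesssim\|h\|_{\nu,2+a}$, after which $e(\tau)=-\int_\tau^\infty e^{-|\la_0|(s-\tau)}f(s)\,ds$ and $|e(\tau)|\lesssim\tau^{-\nu}\|h\|_{\nu,2+a}$. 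This is the definition of $e_0[h]$ and the source of the codimension-one freedom at each concentration point. The remaining part of $\phi^0$ (orthogonal to both $Z_0$ and $Z_{n+1}$) is controlled by the coercivity of $L_0^{(0)}$ on that subspace together with a barrier as in the other modes.

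Finally I would set $\phi=\phi^0+\phi^1+\phi^\perp$ and $e_0=e_0[h^0]$, and sum the three modal bounds to obtain \eqref{cta1} and \eqref{cot2}. Existence of each modal solution with the stated decay I would get by solving on finite time windows $[\tau_0,T]$ and sending $T\to\infty$, the barriers serving as $T$- and $R$-uniform a priori bounds; alternatively the uniformity in $R$ can be extracted by a blow-up/contradiction argument, in which a suitable rescaling of near-extremal solutions would converge to a nontrivial bounded entire solution of $\phi_\tau=L_0\phi$ meeting the orthogonality and decay constraints, contradicting the non-degeneracy of $U$. The main obstacle is the mode-$0$ step: achieving the sharp simultaneous decay in $y$ and $\tau$ with the explicit factor $R^{n-a}$ while coupling it to the unique admissible amplitude $e_0$ — modes $\ell\ge2$ are coercive and mode $1$ is only a variant of the kernel part of mode $0$.
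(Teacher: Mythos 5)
Your high-level skeleton matches the paper's: decompose $h=h^0+h^1+h^\perp$, use a supersolution/barrier to solve the higher modes directly, handle the unstable $Z_0$-projection in mode~$0$ by solving the associated scalar ODE backward from $+\infty$ and defining $e_0$ as the unique admissible initial value, and use the orthogonality against $Z_1,\dots,Z_{n+1}$ in modes $0$ and $1$. The definition of $e_0$ and the estimate $|e_0|\lesssim\|h\|_{\nu,2+a}$ are the same. But there are two genuine gaps.

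First, the mechanism you propose for the sharp spatial decay in modes $0$ and $1$ --- ``subtract the correct multiple of the homogeneous solution'' --- is not the paper's, and it does not obviously close. If $\Phi$ is the slow-decaying parabolic solution and you set $\phi=\Phi-a(\tau)Z$ with $a(\tau)$ chosen to cancel the leading $r^{1-n}$ (or $r^{2-n}$) tail, then $\phi$ solves $\phi_\tau=L_0\phi+h-\dot a(\tau)Z$: the extra forcing $\dot a(\tau)Z$ has the \emph{same slow} spatial decay as the term you subtracted, so the improvement does not propagate. What the paper does instead is a conjugation exploiting $[\partial_\tau,L_0]=0$: write $H=L_0^{-1}[h]$ (here the orthogonality \eqref{ortio} is used, and the decay improves by two powers, from $(1+|y|)^{-(2+a)}$ to $(1+|y|)^{-a}$), solve the \emph{slow}-decay parabolic problem (Lemma~\ref{slow}) for $\Phi$ with data $H$ on the slightly larger ball $B_{3R}$, obtain interior $C^2$ estimates by parabolic rescaling, and set $\phi:=L_0\Phi|_{B_{2R}}$. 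Since $(L_0\Phi)_\tau=L_0(L_0\Phi)+L_0H=L_0(L_0\Phi)+h$, the function $\phi$ satisfies the right equation and inherits the two-power gain, giving $(1+|y|)^{-n}$ (mode $0$) and $(1+|y|)^{-(n+1)}$ (mode $1$). Without this device you only obtain the slow-decay rates of Lemma~\ref{slow}, which is weaker than the stated \eqref{cta1}.

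Second, the $Z_0$-projection does \emph{not} decouple cleanly as a scalar ODE for $e(\tau)=\langle\phi,Z_0\rangle$: integrating $\phi_\tau=L_0\phi+h$ against $Z_0$ over $B_{2R}$ produces, via integration by parts, a boundary term $\int_{\partial B_{2R}}Z_0\,\partial_\nu\phi$ that couples $e(\tau)$ back to the full solution. The paper resolves this by first proving the auxiliary Proposition~\ref{prop1}, solving the modified problem \eqref{p11} with a Lagrange-multiplier forcing $-c(\tau)Z_0$ and the constraint $\int_{B_{2R}}\tilde\phi\,Z_0=0$ for all $\tau$; the crucial $R^{n-2}$-loss coercivity estimate of Lemma~\ref{lema01} on this orthogonal complement is what makes the $L^2$-energy estimate \eqref{en} close. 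Only afterwards is the multiplier $c(\tau)$ absorbed by the backward ODE $e'-\lambda_0e=c$, giving $e_0=e(\tau_0)$ and Proposition~\ref{prop0}. Your sketch mentions coercivity, but the quantitative $R^{n-2}$ constant and the fixed-point argument for the self-consistent $c(\tau)$ in \eqref{cc} are essential and missing.

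Finally, a small point: the orthogonality conditions $\int_{B_{2R}}h\,Z_j=0$ for $j=1,\dots,n+1$ are used in the paper precisely to make the operator $L_0^{-1}$ well defined with good decay, not (as your mode-$1$ description suggests) to solve a reduced parabolic equation in an $L^2$-complement and then correct; mode $1$ has no unstable eigenvalue, and Lemma~\ref{slow} treats it with a pure barrier.
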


\medskip
Proposition \ref{prop0} is a direct consequence of Proposition \ref{prop1} below, to whose proof we will devote most of this section.
It refers to the following problem:

\be \label{p11}
\phi_\tau  =
\Delta \phi + pU(y)^{p-1} \phi + h(y,\tau )-c(\tau) Z_0 \inn B_{2R} \times (\tau_0, \infty )
\ee
$$
\phi(y,0) = 0 \inn B_{2R}
$$

\begin{prop} \label{prop1}
Let $\nu,a $ be given positive numbers with $0< a < 1 $. Then, for all sufficiently large $R>0$ and any $h$ with  $\|h\|_{\nu, 2+a} <+\infty$ and satisfying the orthogonality conditions \eqref{ortio},
there exist  $\phi = \phi[h]$ and $c=  c[h]$ which solve Problem $\equ{p11}$,  and define linear operators of $h$. The function
$\phi[h]$ satisfies estimate $\equ{cta1}$,
and for some $\gamma >0$
\be
\left |  c(\tau) -  \int_{B_{2R}} hZ_0 \right | \, \lesssim \, \tau^{-\nu}\, \Big [\,    R^{2-a} \left \| h -  Z_0 \int_{B_{2R}} hZ_0\, \right \|_{ \nu, 2+a } +     e^{-\gamma R} \|h\|_{ \nu, 2+\alpha } \Big ].
\label{cta2}\ee
\end{prop}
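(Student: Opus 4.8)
\bigskip

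The plan is to solve the projected problem \eqref{p11} mode-by-mode in spherical harmonics, exactly as the paragraph before Proposition \ref{prop0} suggests, and then to reassemble. So I would first treat the three pieces $(\phi^0,h^0)$, $(\phi^1,h^1)$, $(\phi^\perp,h^\perp)$ separately. For the higher modes $h^\perp$ the linear operator $L_0$ has no kernel after projection (the eigenvalues $\lambda_m$ for $m\ge n+1$ are strictly larger than $n-1$, so $\Delta_{S^{n-1}}+pU^{p-1}r^2$ restricted there is strictly negative at the relevant order), and one expects no constraint: $c(\tau)$ only enters the radial mode, and $Z_0$ is radial, so for $h^\perp$ and $h^1$ we simply set the corresponding part of $c(\tau)$ to zero and solve a linear parabolic equation with a coercive spatial operator. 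The decay rates $\tau^{-\nu}|y|^{-n}$ and $\tau^{-\nu}|y|^{-n+1}$ claimed in \eqref{cta1} come from: (i) constructing an explicit radial (respectively mode-one) supersolution of the form $\tau^{-\nu}\,\varphi(|y|)$ where $\varphi$ solves the corresponding stationary ODE with the right-hand side $ (1+|y|^a)^{-1}$, using the indicial roots of $L_0$ near $0$ and near $R$; (ii) the orthogonality condition \eqref{ortio} against $Z_j$, $j=1,\dots,n+1$, which kills the slowly-decaying homogeneous solution and forces the fast decay; and (iii) a barrier argument up to the initial time $\tau_0$ and the artificial boundary $|y|=2R$, where a boundary layer of size $e^{-\gamma R}$ or a power $R^{n-a}$ appears because we impose no boundary condition and must absorb the boundary flux. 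Standard parabolic comparison (the maximum principle for $\phi_\tau-\Delta\phi-pU^{p-1}\phi$) then yields the pointwise bounds; energy estimates in weighted $L^2$ give an alternative and also the well-posedness.

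\bigskip

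The genuinely delicate piece is the radial mode $(\phi^0,h^0)$, where $L_0$ \emph{does} have the bounded radial kernel element $Z_{n+1}$ and the negative eigenfunction $Z_0$. Here I would proceed as follows. First, by projecting $h^0$ onto $Z_{n+1}$ and using the orthogonality hypothesis $\int_{B_{2R}}h Z_{n+1}=0$, decompose $h^0 = \bar h + Z_0\int_{B_{2R}}hZ_0$ where $\bar h$ is orthogonal to $Z_0$ on $B_{2R}$; then the correction $c(\tau)Z_0$ has to absorb essentially $\int_{B_{2R}}hZ_0$ plus a correction controlled by the residual. To fix $c(\tau)$, test the equation against $Z_0$: using $L_0 Z_0=-\lambda_0 Z_0$ (with $\lambda_0<0$), letting $e(\tau)=\int_{B_{2R}}\phi Z_0$, I get the scalar ODE $\dot e+\lambda_0 e = \int h Z_0 - c(\tau)\|Z_0\|^2 + (\text{boundary terms of size }e^{-\gamma R})$. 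Since $\lambda_0<0$ the homogeneous solution $e^{-\lambda_0\tau}$ grows; choosing $c(\tau)$ so that the source is instantaneously balanced, $c(\tau)\|Z_0\|^2 = \int hZ_0 + \lambda_0 e(\tau)$, closes the system: one then has to show $|\lambda_0 e(\tau)|\lesssim \tau^{-\nu}R^{2-a}\|\bar h\|_{\nu,2+a}$, which follows once $\phi$ is controlled in the $|y|^{-n}$-weighted norm away from the $Z_0$-direction. This is a fixed-point: $c\mapsto\phi[h-cZ_0]\mapsto$ new $c$; the contraction constant is small because $Z_0$ decays exponentially, so its overlap with the far region where $\phi$ lives is $O(e^{-\gamma R})$, and in the near region the residual after removing $\int hZ_0\,Z_0$ is what produces the $R^{2-a}$ factor. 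That is exactly estimate \eqref{cta2}.

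\bigskip

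For the radial linear solvability with the $Z_{n+1}$ kernel, the tool is the variation-of-parameters formula (as already used for $p_0$ around \eqref{marshall}) together with the orthogonality $\int h^0 Z_{n+1}=0$ on $B_{2R}$: one writes the bounded solution of $L_0 w = \bar h$ on $B_{2R}$ with Neumann-type matching at $|y|=2R$, using the two radial solutions $Z_{n+1}$ and $\tilde Z_{n+1}$ of $L_0 w=0$ (the latter $\sim r^{2-n}$ at $0$, $\sim 1$ at $\infty$), and checks the decay $|w|\lesssim R^{n-a}|y|^{-n}(1+\cdot)$ of the stationary solution; then one runs the parabolic problem $\phi_\tau - L_0\phi = \bar h$ with this as an $R$-uniform barrier after multiplying by $\tau^{-\nu}$ (the extra $\tau^{-\nu}$ being lower order since $\nu>0$ and $\dot{\tau^{-\nu}}$ is integrable against the barrier). \textbf{The main obstacle} I anticipate is precisely getting the boundary contributions at $|y|=2R$ to be genuinely of the stated sizes ($R^{2-a}$, $R^{n-a}$, $e^{-\gamma R}$) uniformly in $\tau$ and simultaneously with the orthogonality conditions preserved only on the finite ball $B_{2R}$ rather than on all of $\mathbb{R}^n$: the kernel elements $Z_j$ are not compactly supported, so $\int_{B_{2R}}hZ_j=0$ does not make $L_0$ literally invertible on the orthogonal complement, and one must track the $O(R^{-\text{something}})$ errors coming from the tails of $Z_j$ and from the absence of a prescribed boundary condition. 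Handling this carefully — with the right choice of "free" boundary behavior for $\phi$ and the matching of indicial exponents — is where essentially all the work in Section \ref{seclineartheory} goes.
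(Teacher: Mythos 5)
Your broad outline---spherical-harmonic decomposition, the scalar ODE for $e(\tau)=\int\phi Z_0$ obtained by testing against $Z_0$, and the role of the orthogonality conditions \eqref{ortio} in forcing fast decay---matches the paper's strategy. But the central mechanism you propose for obtaining the fast decay $|y|^{-n}$ in the radial mode has a genuine gap, and there is a key device in the paper's proof that your proposal does not contain.

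You propose to build a stationary supersolution $w$ of $L_0 w=-(1+|y|)^{-(2+a)}$ by variation of parameters with the radial kernel elements $Z_{n+1}$ and $\tilde Z_{n+1}$, check that it decays like $R^{n-a}|y|^{-n}$, and then run comparison in the parabolic problem with the barrier $\tau^{-\nu}w$. This cannot work directly: the operator $\partial_\tau-L_0$ on $B_{2R}$ does \emph{not} satisfy a maximum principle, because $L_0$ has both the unstable eigenfunction $Z_0$ (with $L_0Z_0=-\la_0 Z_0$, $\la_0<0$) and the sign-changing kernel element $Z_{n+1}$ (positive near $0$, negative for $|y|>1$). In particular, the variation-of-parameters solution $w$ is in general sign-changing, so it is not a barrier; and even if you restrict to the orthogonal complement, the orthogonality is not propagated by the flow, so you cannot compare. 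The paper resolves this in two separate ways for the two obstructions: (i) for the unstable direction, the Lagrange-multiplier $c(\tau)Z_0$ is chosen so that $\int\phi Z_0\equiv 0$ is preserved (this is equations \eqref{modo01}--\eqref{modo02} and the functional \eqref{cc}, which agrees with your ODE-for-$e$ idea), and the resulting energy inequality uses the coercivity with decaying constant $\gamma R^{2-n}$ established in Lemma \ref{lema01}; (ii) the comparison argument (Lemma \ref{lemaphi0}) is run only for a \emph{modified} operator $\Delta+pU^{p-1}(1-\chi_M)$ whose potential is cut off near the origin and therefore enjoys a maximum principle, and the compactly supported difference is absorbed by the energy estimate. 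The combination is Lemma \ref{slow}, which gives the solution of \eqref{modo0} with the \emph{slow} decay $R^{n-2}|y|^{2-n}$ (radial mode) --- not yet the $|y|^{-n}$ decay you want.

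The decisive step you are missing is how the paper boosts slow decay to the fast decay of \eqref{cta1}. Rather than trying to build a fast-decaying barrier, the paper applies Lemma \ref{slow} to the \emph{elliptically preconditioned} forcing $H_0=L_0^{-1}[h_0]$ (which satisfies $\|H_0\|_{a,\nu}\lesssim\|h_0\|_{2+a,\nu}$, gaining two powers of decay in the datum), solves the resulting zero-Dirichlet problem \eqref{modo00} on the \emph{larger} ball $B_{3R}$ to get $\Phi$ with slow decay, then proves interior parabolic Schauder estimates in self-similar coordinates to control $(1+|y|)|\nn\Phi|$ and $(1+|y|^2)|D^2\Phi|$ by the same slow-decay rate, and finally \emph{defines} $\phi^0:=L_0[\Phi]\big|_{B_{2R}}$. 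Because $L_0\Phi=\Delta\Phi+pU^{p-1}\Phi$ involves two derivatives of $\Phi$ (and the fast-decaying potential $U^{p-1}\sim|y|^{-4}$), this $\phi^0$ automatically picks up the extra two powers of decay, and an algebraic computation shows it satisfies exactly the target equation \eqref{p11} on $B_{2R}$ (with $c=\la_0 c_0$). The enlargement to $B_{3R}$ is what makes the restriction to $B_{2R}$ insensitive to the boundary condition imposed on $\Phi$, handling precisely the issue you flagged as the ``main obstacle.'' The same device is repeated for modes $1,\dots,n$. Without this $L_0^{-1}/L_0$ conjugation trick, your proposed barrier or energy argument will only deliver the weaker decay $|y|^{2-n}$ of Lemma \ref{slow}, not the $|y|^{-n}$ (resp.\ $|y|^{-n-1}$) rates in \eqref{cta1}.
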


\medskip
\subsubsection*{\bf Proof of Proposition \ref{prop0}.}
Let us derive Proposition \ref{prop0} from Proposition \ref{prop1}. Let us write
\be
\phi(y,\tau) = \phi_1(y,\tau) +  e(\tau) Z_0(y)
\label{vv}\ee
where $\phi_1$ is the solution of Problem \eqref{p11} predicted by Proposition \ref{prop1}.
Assuming that $e\in C^1 \left( [\tau_0,\infty) \right)$ we find
$$
\pp_\tau \phi   =   \Delta \phi  + pU^{p-1} \phi   + h(y,\tau ) +   \left[ \, e'(\tau)-\la_0 e(\tau) - c(\tau) \, \right] \, Z_0(y).
$$
At this point we make the natural choice of $e(\tau)$ as the unique bounded solution of the equation
$$
e'(\tau)\,-\, \la_0 e(\tau)\, =\,  c(\tau), \quad \tau \in (\tau_0,\infty)
$$
which is explicitly given by
$$
e(\tau )  =   \int_\tau^\infty \exp({ \sqrt{\la_0} (\tau - s)})\, c(s)\, ds  \, .
$$
The function $e$ depends linearly on $h$. Besides, we clearly have from \equ{cta2},
$$
|e(\tau)| \ \lesssim \ \|c\|_\nu  \ \lesssim \  \tau^{-\nu} \|h\|_{ \nu, 2+a}.
$$
and thus, from the fact that $\phi_1$ satisfies estimate \equ{cta1},  so does $\phi$ given by \equ{vv}. Thus
$\phi$ satisfies Problem \equ{p110} with initial condition $\phi(y,\tau_0) =  e(\tau_0 ) Z_0(y)$.
The proof is concluded. \qed


\bigskip
We devote the rest of the Section to prove  Proposition \ref{prop1}  which is at the core of the proof of our main results.


\subsection{The slow-decay  solution}
As an intermediate result to establish Proposition \ref{prop1}, we shall consider Problem \equ{p11} in which the decay in space variable is relaxed to any positive power and no orthogonality assumptions on $h$ are made.  Instead we shall impose a zero Dirichlet boundary condition. For $h= h(y,\tau)$ we consider
the initial-boundary value problem

\be\label{modo0}
 \phi_\tau = \Delta \phi   + pU(y)^{p-1} \phi  + h(y,\tau) - c(\tau) Z_0(y)\inn B_{2R}\times (\tau_0,\infty)
 \ee
$$
\phi = 0 \onn  \pp B_{2R} \times (\tau_0,\infty)  ,\quad \phi(\cdot, \tau_0 ) = 0 \inn B_{2R}.
$$

\medskip
 Our principal result in this subsection is the following

\begin{lemma} \label{slow}

Let $\nu >0$ and $0<a<3$.
Then, for all sufficiently large $R>0$ and any $h$ with  $\|h\|_{\nu, a} <+\infty$
there exists  $\phi$ and $c$ which solve Problem $\equ{modo0}$ which define linear operators of $h$
and satisfy the estimates
$$
  |\phi(y,\tau)|
   \ \lesssim   \
   $$
   \be
   \tau^{-\nu} \Big [
     \frac {R^{n-2} \theta_R^0  \| h^0\|_{\nu,a} }{ 1+ |y|^{n-2} }     +   \frac {R^{n} \theta_R^1\| h^1\|_{\nu,a} }{ 1+ |y|^{n-1} }    +   \frac{\theta_R^0 \| h \|_{\nu,a}}{1+|y|^{n-2}} +   \frac{ \| h \|_{\nu,a}}{(1+ |y|)^{a-2}}\Big ] \label{cota11}\ee
where
\be\label{theta}
\theta_R^0 = \left \{ \begin{matrix}  1   & \hbox{\rm  if } a> 2 \\   \log R  & \hbox{\rm if } a= 2 \\ R^{2-a}   & \hbox{ \rm if } a < 2, \end{matrix}\right. , \quad \theta_R^1 = \left \{ \begin{matrix}  1   & \hbox{\rm  if } a> 1 \\   \log R  & \hbox{\rm if } a= 1 \\ R^{1-a}   & \hbox{ \rm if } a < 1, \end{matrix}\right.
\ee
and for some $\gamma >0$
\be
\left |  c(\tau) -  \int_{B_{2R}} hZ_0 \right | \, \lesssim \, \tau^{-\nu}\, \Big [\,  \theta_R^0 \left \| h -  Z_0 \int_{B_{2R}} hZ_0\, \right \|_{ \nu, a} +     e^{-\gamma R} \|h\|_{ \nu, a} \Big ].
\label{cota22}\ee
\end{lemma}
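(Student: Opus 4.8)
The proof of Lemma~\ref{slow} splits into the three spherical-harmonic modes, since the operator $L_0$ and the initial/boundary conditions respect this decomposition, and $Z_0$ lives purely in mode $0$. For the mode-$0$ pair $(\phi^0,h^0)$ the constant $c(\tau)$ is genuinely present, and its value will be forced by a solvability condition; for the modes $1$ and $\perp$ one simply sets $c=0$ in those modes and solves directly. So the real content is: (i) construct, for each fixed $\tau$, a suitable \emph{elliptic} solution operator on $B_{2R}$ with zero Dirichlet data, with the stated spatial decay; (ii) upgrade this to a parabolic statement using energy/comparison estimates and the Duhamel representation in $\tau$; (iii) track the weight $\tau^{-\nu}$, which is immediate since the right-hand side carries exactly that factor and the homogeneous parabolic evolution only improves decay.

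\textbf{Step 1: the elliptic model and the choice of $c(\tau)$.} For the radial mode, I would first solve the static problem $\Delta\phi^0 + pU^{p-1}\phi^0 = g - c\,Z_0$ in $B_{2R}$, $\phi^0=0$ on $\partial B_{2R}$. Since $Z_0>0$ is the eigenfunction of the (only) negative eigenvalue $\lambda_0$ of $L_0$, the operator $L_0$ restricted to radial functions vanishing on $\partial B_{2R}$ is, for large $R$, invertible modulo the one-dimensional obstruction spanned by (the truncation of) $Z_0$; concretely $c$ is chosen so that the Fredholm alternative is satisfied, which gives $c(\tau)=\int_{B_{2R}} h\,Z_0 + \text{(small correction)}$. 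The size of the correction, namely estimate~\eqref{cota22}, comes from testing the equation against $Z_0$ and using that $Z_0$ solves the homogeneous equation up to an exponentially small boundary term (recall $Z_0(y)\sim |y|^{-(n-1)/2}e^{-\sqrt{|\lambda_0|}|y|}$), which produces the $e^{-\gamma R}$ term, while the genuinely non-resonant part of $h$ is controlled by inverting $L_0$ against $q(y)=(1+|y|)^{-a}$-type weights — here the thresholds $a\gtrless 2$ (resp. $a\gtrless 1$ in mode $1$) arise from the competition between the decay of the right-hand side and the Newtonian potential, giving rise to the factors $\theta_R^0,\theta_R^1$ in~\eqref{theta}. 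For modes $1$ and $\perp$ the linear operator has no kernel compatible with the boundary condition (the $Z_i$, $i=1,\dots,n$, do not vanish on $\partial B_{2R}$, and for $\perp$ one has a positive-definite operator), so one inverts directly; the extra power of $|y|$ in the decay for mode $1$ reflects that the relevant Green's kernel there decays one order faster.

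\textbf{Step 2: from elliptic to parabolic.} With the elliptic solution operator and the choice of $c(\tau)$ in hand, I would construct $\phi(y,\tau)$ by a fixed-point / Duhamel argument: write $\phi=\phi_{\mathrm{ell}}(\cdot,\tau) + \rho$ where $\phi_{\mathrm{ell}}(\cdot,\tau)$ solves the static problem with right-hand side $h(\cdot,\tau)-c(\tau)Z_0$, and absorb the time-derivative term $\partial_\tau\phi_{\mathrm{ell}}$ (which is smaller, carrying an extra factor from $\dot\tau$-type decay in $\nu$) into the source. The remainder $\rho$ then solves a parabolic problem with zero data and a small, fast-decaying source, to which a standard barrier argument — using the function $\tau^{-\nu}\bar p(|y|)$ with $\bar p$ a suitable super-solution of $\Delta\bar p + pU^{p-1}\bar p \le -\text{source}$, exactly as in the proof of Lemma~\ref{psistar1}, adapted to the cylinder $B_{2R}\times(\tau_0,\infty)$ with zero lateral data — yields the pointwise bound~\eqref{cota11}. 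Linearity of the whole construction in $h$ is manifest at each stage.

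\textbf{The main obstacle.} The delicate point is Step~1 in mode $0$: making the choice of $c(\tau)$ precise enough to get the \emph{two-term} estimate~\eqref{cota22} (with the non-resonant part measured in the smaller quantity $\|h - Z_0\int h Z_0\|$ rather than $\|h\|$), and controlling uniformly in the large parameter $R$ the inverse of $L_0$ on the truncated domain. One has to quantify how far the Dirichlet eigenfunction of $L_0$ on $B_{2R}$ is from the true $Z_0$ on $\mathbb R^n$, and how the negative eigenvalue $\lambda_0(R)$ converges to $\lambda_0$, both with exponential rate in $R$; this is what produces the $e^{-\gamma R}$ term and is the one place where the precise exponential decay of $Z_0$ is essential. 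Everything else — the three-mode split, the weighted-norm bookkeeping of $\theta_R^0,\theta_R^1$, and the parabolic barrier — is, modulo careful but routine computation, standard.
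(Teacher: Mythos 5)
Your structural intuitions are largely right — the three-mode split, the observation that $c(\tau)$ lives only in mode~$0$, the attribution of $\theta_R^0,\theta_R^1$ to the competition between the decay rate $a$ and the Newtonian kernel, and the identification of $Z_0$'s exponential decay as the source of the $e^{-\gamma R}$ terms — but the core mechanism in mode~$0$ is misidentified, and as a result Step~2 would not close.

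\textbf{The role of $c(\tau)Z_0$ is parabolic, not elliptic.} You describe $c$ as a Fredholm multiplier making the static problem $L_0\phi^0=-(g-cZ_0)$ solvable, ``invertible modulo the one-dimensional obstruction spanned by (the truncation of) $Z_0$.'' This is not correct: $Z_0$ is an eigenfunction of $L_0$ with a \emph{nonzero} eigenvalue $|\lambda_0|$, not a kernel element, and for large $R$ the radial Dirichlet problem for $L_0$ on $B_{2R}$ is invertible for any right-hand side — there is no Fredholm obstruction at all (the near-zero eigenvalue is associated to a truncation of $Z_{n+1}$, and even that is strictly nonzero). The term $c(\tau)Z_0$ is instead needed for a purely \emph{parabolic} reason: the operator $\partial_\tau-L_0$ on $B_{2R}$ with zero lateral data has the unstable mode $e^{|\lambda_0|\tau}Z_0$, and $c(\tau)$ must be tuned in time to project the source onto the orthogonal complement of $Z_0$ so that this mode never gets excited. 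The paper enforces this by imposing $\int_{B_{2R}}\tilde\phi(\cdot,\tau)Z_0=0$ for all $\tau$, which yields an explicit (self-referential, but with only an exponentially small dependence on $\tilde\phi$ through a boundary term) formula for $\tilde c(\tau)$; this is what produces the refined two-term estimate \eqref{cota22}.

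\textbf{Consequently, your Step~2 fails in mode~$0$.} If you solve the elliptic problem at each fixed $\tau$ and then let $\rho=\phi-\phi_{\rm ell}$ solve the parabolic remainder problem with source $-\partial_\tau\phi_{\rm ell}$ and zero data, the $Z_0$-component of that source (even if exponentially small in $R$) is amplified by $e^{|\lambda_0|\tau}$ and destroys all decay for $\tau$ large. A comparison/barrier argument — which you invoke ``exactly as in the proof of Lemma~\ref{psistar1}'' — is not available here: that lemma's potential $V_{\mu,\xi}$ is a small perturbation of $\Delta$, whereas here the full potential $pU^{p-1}$ carries the negative eigenvalue and admits no time-decaying supersolution. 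The paper's route around this is an energy estimate: after peeling off the contribution from an auxiliary problem with the potential cut off at radius $M$ (Lemma~\ref{lemaphi0}, where a barrier \emph{does} work because the far-field operator is the plain Laplacian), one tests the remainder equation against $\tilde\phi$ and uses the key coercivity inequality $\frac{\gamma}{R^{n-2}}\int\tilde\phi^2\le Q(\tilde\phi,\tilde\phi)$ on the $Z_0$-orthogonal complement (Lemma~\ref{lema01}) together with Gronwall, then upgrades the $L^2$ bound to a pointwise one by local parabolic regularity plus a barrier outside a compact set. Neither the cut-off auxiliary problem nor the coercivity lemma — which is really the crux of the whole linear theory and fixes the $R^{n-2}$ factor in \eqref{cota11} — appears in your outline. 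For mode~$\perp$ the same energy scheme is used with the stronger Hardy-type coercivity $\int|\phi^\perp|^2/r^2\lesssim Q(\phi^\perp,\phi^\perp)$; for modes $1$ to $n$, where $Q_1\ge 0$, the paper does use a direct supersolution as you suggest.
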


\medskip
The proof requires technical ingredients which we will establish in Lemmas \ref{lema01} and  \ref{lemaphi0}  below.
We will make use of the following basic, key lemma regarding the quadratic form associated to the linear operator $L_0 = \Delta + pU^{p-1}$,
\be\label{Q}
Q(\phi,\phi) := \int \left[  |\nn \phi|^2 - pU^{p-1} |\phi|^2 \right]  .
\ee
The next result provides an estimate of the associated second $L^2$-eigenvalue in a ball $B_{2R}$ with large radius under zero boundary conditions.

\begin{lemma} \label{lema01}
There exists a constant $\gamma >0$ such that for all sufficiently large $R$ and all radially symmetric function $\phi\in H_0^1(B_{2R})$ with
 $ \int_{B_{2R}} \phi Z_0 = 0 $
 we have
\be \label{ineq0}
\frac {\gamma} {R^{n-2}} \int_{B_{2R}} |\phi|^2 \ \le \ Q(\phi,\phi) .
\ee
\end{lemma}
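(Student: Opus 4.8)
The plan is to prove the spectral gap estimate \eqref{ineq0} by a compactness/contradiction argument combined with the scaling structure of the problem. The natural guess is that the $L^2$-normalised eigenfunction of $Q$ on $B_{2R}$ restricted to functions orthogonal to $Z_0$ behaves like the rescaled ground state $Z_0$ itself — which has been removed — so the next eigenfunction should essentially live on the scale of the whole ball $B_{2R}$, where $pU^{p-1}$ is negligible, and there a Poincaré inequality on $B_{2R}$ gives exactly a lower bound of order $R^{-2} \gg R^{-(n-2)}$; the factor $R^{-(n-2)}$ in the statement is generous, so there is slack. The key point to exploit is that $Z_0$ decays exponentially, so the only way $Q(\phi,\phi)$ can be small is for $\phi$ to be small in $L^\infty_{loc}$ near the origin (where the potential $pU^{p-1}$ is concentrated), forcing mass out to large $|y|$.

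First I would argue as follows. Suppose \eqref{ineq0} fails: then there is a sequence $R_m\to\infty$ and radial $\phi_m\in H^1_0(B_{2R_m})$ with $\int_{B_{2R_m}}\phi_m Z_0=0$, $\int_{B_{2R_m}}|\phi_m|^2 = R_m^{n-2}$ (normalisation), and $Q(\phi_m,\phi_m)\le \tfrac1m R_m^{-(n-2)}\int|\phi_m|^2 = \tfrac1m \to 0$. From $Q(\phi_m,\phi_m)\to 0$ and $\int|\nabla\phi_m|^2 = Q(\phi_m,\phi_m) + p\int U^{p-1}|\phi_m|^2$, I would first obtain an $L^2_{loc}$-bound: on a fixed ball $B_\rho$, interpolating with Sobolev and using that $U^{p-1}\in L^{n/2}$, the term $p\int U^{p-1}|\phi_m|^2$ is controlled by $\epsilon\|\nabla\phi_m\|_2^2 + C_\epsilon\|\phi_m\|^2_{L^2(B_{\rho_\epsilon})}$ for a large fixed ball, so a standard absorption argument needs only a uniform bound on $\|\phi_m\|_{L^2(B_{\rho})}$ for one fixed $\rho$. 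To get that, I would use the Dirichlet condition on $\partial B_{2R_m}$ together with the (weighted) Poincaré inequality on $B_{2R_m}$: for radial $H^1_0(B_{2R_m})$ functions one has $\int_{B_{2R_m}}|\phi|^2 \lesssim R_m^2\int_{B_{2R_m}}|\nabla\phi|^2$. Hence $R_m^{n-2} = \int|\phi_m|^2 \lesssim R_m^2\int|\nabla\phi_m|^2$, i.e. $\int|\nabla\phi_m|^2 \gtrsim R_m^{n-4}$, which is compatible with $Q\to 0$ only through the positive potential term, giving $p\int U^{p-1}|\phi_m|^2 \gtrsim R_m^{n-4}$. Since $U^{p-1}$ decays, this forces $\|\phi_m\|_{L^\infty}$, hence $\|\phi_m\|_{L^2(B_\rho)}$ for fixed $\rho$, to be large; a more careful bookkeeping (rescaling $\tilde\phi_m(y) := R_m^{-(n-2)/2}\phi_m(y)$ so $\|\tilde\phi_m\|_{L^2(B_{2R_m})}=1$ and $Q(\tilde\phi_m,\tilde\phi_m)\to 0$) is cleaner: then $\tilde\phi_m$ is bounded in $H^1_{loc}(\mathbb R^n)$, so up to a subsequence $\tilde\phi_m\rightharpoonup \phi_\infty$ in $H^1_{loc}$ and strongly in $L^2_{loc}$ with the weight $U^{p-1}$; by lower semicontinuity $Q(\phi_\infty,\phi_\infty)\le 0$ and $\int U^{p-1}|\phi_\infty|^2 = \lim \int U^{p-1}|\tilde\phi_m|^2$.

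Next I would identify $\phi_\infty$. Since $Q(\phi_\infty,\phi_\infty)\le 0$, and the only radial direction on which $Q$ is non-positive (it is negative only in the direction of $Z_0$, which is the ground state of $L_0$) is a multiple of $Z_0$, I get $\phi_\infty = \beta Z_0$ for some constant $\beta$. The orthogonality $\int_{B_{2R_m}}\phi_m Z_0 = 0$ rescales to $\int_{B_{2R_m}}\tilde\phi_m Z_0 = 0$, and passing to the limit (legitimate because $Z_0$ is exponentially decaying, hence in $L^2$, and $\tilde\phi_m\to\phi_\infty$ strongly in $L^2$ against any fixed $L^2$ weight) gives $\int_{\mathbb R^n}\phi_\infty Z_0 = 0$, i.e. $\beta\int Z_0^2 = 0$, so $\phi_\infty\equiv 0$. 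But then $\int_{\mathbb R^n} U^{p-1}|\phi_\infty|^2 = 0$, so $\lim_m\int_{B_{2R_m}}U^{p-1}|\tilde\phi_m|^2 = 0$, whence $\int|\nabla\tilde\phi_m|^2 = Q(\tilde\phi_m,\tilde\phi_m) + p\int U^{p-1}|\tilde\phi_m|^2 \to 0$. Combined with $\|\tilde\phi_m\|_{L^2(B_{2R_m})}=1$ this contradicts the Poincaré inequality $1 = \int_{B_{2R_m}}|\tilde\phi_m|^2 \lesssim R_m^2\int|\nabla\tilde\phi_m|^2 \to 0$... here I need to be careful, since $R_m^2\to\infty$; instead I must use the sharper statement that for functions $L^2$-orthogonal to $Z_0$ the correct comparison is with $R_m^{n-2}$, not $R_m^2$ — which is precisely what I am trying to prove. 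So the cleanest route is not to invoke Poincaré with the crude $R_m^2$ constant, but rather to run the contradiction at the \emph{un}normalised level and directly track that $Q(\phi_m,\phi_m)\ge \gamma R_m^{-(n-2)}\|\phi_m\|_2^2$ by testing with an explicit comparison function: decompose $\phi_m = a_m Z_0 + \phi_m^\sharp$ with $\phi_m^\sharp\perp Z_0$ (so $a_m=0$ here), observe that the lowest eigenvalue of $L_0$ above $Z_0$ on $\mathbb R^n$ is $0$ with eigenfunctions $Z_1,\dots,Z_{n+1},Z_{n+1}$, and that on $B_{2R}$ with Dirichlet data the next eigenvalue is strictly positive; an ODE/Sturm–Liouville computation (or the explicit radial solution $\tilde Z_{n+1}\sim 1$ at infinity used elsewhere in the paper) shows this eigenvalue is comparable to $R^{-(n-2)}$ because the corresponding eigenfunction is $O(1)$ on a unit ball and the normalisation integral is $\sim R^{n-2}$ while $Q$ of it is $O(1)$.

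The main obstacle I anticipate is getting the sharp power $R^{-(n-2)}$ rather than merely a positive constant: one must account for the fact that the "second radial eigenfunction" of $L_0$ on $B_{2R}$, built from $Z_{n+1}$ and the singular companion solution $\tilde Z_{n+1}$, has $L^2$-norm growing like $R^{n-2}$ (since it behaves like a constant at large $r$, and $\int_1^{R} r^{n-1}\,dr \sim R^n$ — wait, that gives $R^n$, so in fact the bound might even be $R^{-n}$, consistent with the statement's slack), while its Dirichlet energy stays bounded because $pU^{p-1}$ is integrable. Carrying out this Sturm–Liouville bookkeeping carefully — in particular matching the Dirichlet boundary condition at $r=2R$ and checking the sign of the resulting eigenvalue — is the delicate computational heart of the lemma; the compactness argument above then upgrades this explicit lower bound for the direction orthogonal to $Z_0$ to the full inequality \eqref{ineq0}, and the loose constant $\gamma$ absorbs all inefficiencies in the estimates.
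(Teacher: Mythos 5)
You correctly identify the overall strategy and correctly see that the $R^2$-Poincar\'e version of the compactness argument cannot produce the rate $R^{-(n-2)}$; but your fallback sketch does not close either. Testing $Q$ against an explicit comparison function gives an \emph{upper} bound on the constrained infimum $\lambda_R$, not the lower bound the lemma requires, and your power count is internally inconsistent: you vacillate between $R^{n-2}$ and $R^n$, and your tentative remark that ``the bound might even be $R^{-n}$, consistent with the statement's slack'' is backwards --- the lemma asserts $\lambda_R\gtrsim R^{-(n-2)}$, so $\lambda_R\sim R^{-n}$ would falsify it, not be covered by slack. Moreover, the minimizer is \emph{not} ``$O(1)$ at large $r$''; it is close to $Z_{n+1}\sim r^{2-n}$, which lies in $L^2(\R^n)$ precisely because $n\ge 5$.

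What closes the argument, and what your sketch is missing, is an explicit representation of the minimizer rather than a test-function estimate. A compactness argument first gives a minimizer $\phi_R$ with $\|\phi_R\|_{L^2(B_{2R})}=1$ solving the radial Euler--Lagrange ODE $\mathcal L_0[\psi_R]=-\lambda_R\psi_R + c_R Z_0=:h_R$, $\psi_R(2R)=0$; testing against a cut-off $Z_0\eta_R$ shows the Lagrange multiplier $c_R$ is exponentially small, hence $\|h_R\|_{L^2}\lesssim\lambda_R+e^{-\sigma R}$. Writing $\psi_R$ by variation of parameters in terms of $Z=Z_{n+1}$ and the singular companion $\tilde Z$, the Dirichlet condition at $r=2R$ forces a coefficient $A_R=Z(2R)^{-1}\tilde Z(2R)\int_0^{2R}h_R Z\,s^{n-1}\,ds$; the ratio $Z(2R)^{-1}\tilde Z(2R)\sim R^{n-2}$ is exactly where the power arises, and since $\|Z\|_{L^2(\R^n)}<\infty$ (this is the role of $n\ge 5$) all remaining variation-of-parameters terms are uniformly bounded, yielding $1=\|\phi_R\|_{L^2}\lesssim R^{n-2}(\lambda_R+e^{-\sigma R})$. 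If $\lambda_R=o(R^{2-n})$ the right side tends to $0$ --- contradiction. The three ingredients absent from your proposal are the ODE representation of the minimizer, the exponential smallness of $c_R$, and the integrability of $Z_{n+1}$ which keeps the variation-of-parameters integrals bounded in $R$.
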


\proof
Let $H_R$ be the linear space of all radial functions $\phi\in H_0^1(B_{2R})$ that satisfy the
orthogonality condition $ \int_{B_{2R}} \phi Z_0 = 0 $, and
$$
\la_R := \inf  \left \{ \, {Q(\phi, \phi)}\ /\   {\phi\in H_R}, \  \int_{B_{2R}} |\phi|^2 = 1 \, \right \}   .
$$
A standard compactness argument yields that $\la_R$ is achieved by a radial function $\phi_R (x) = \psi_R(r)\in H_R$ with $\int_{B_{2R}}  \phi_R^2 = 1 $, which satisfies the equation
\be
L_0[\phi_R] + \la_R  \phi_R =  c_R Z_0 \inn B_{2R}, \quad \phi_R = 0\onn \pp B_{2R},
\label{ggg}\ee
for a suitable Lagrange multiplier $c_R$.
We have that $\la_R\ge 0$. Indeed, the radial eigenvalue problem in $\R^n$
\be
\mathcal L_0[\psi] + \la \psi   = 0 , \quad \psi'(0) = \psi(+\infty) =0
\label{l0}\ee
where
$$
\mathcal L_0[\psi] := \psi'' + \frac{n-1}r  \psi' +p U(r)^{p-1}\psi
$$
has just one negative eigenvalue, as it follows from  maximum principle, using the fact that
$
\mathcal L_0[Z]= 0
$
with $Z=Z_{n+1}$, and the fact that this function changes sign just once. It follows that the associated quadratic form must be positive in $H^1(\R^n)$-radial, subject to the $L^2$-orthogonality condition
with respect to $Z_0$. This implies $\la_R\ge 0$.
Thus, to establish \equ{ineq0},  we assume by contradiction that
\be \label {cont}
\la_R = o( R^{2-n} ) \ass R\to +\infty .
\ee
Let $\chi$ be a smooth cut-off function with
\be  \hbox{ $\chi(s)=1$ for $s<1$ and $\chi(s)=0$ for $s>2$.}  \label{chi}\ee
Testing \equ{ggg} against $Z_0 (y)\eta_R  (|y|) $  where $\eta_R (s) = \chi( s- \frac R2)$, we get
$$
c_R\int_{B_{2R}} Z_0^2\eta_R  =   \int_{B_{2R}} \phi_R  [Z_0 \Delta\eta_R + 2\nn\eta_R \nn Z_0 ].
$$
Since $\| \phi_R\|_{L^2(B_{2R})}=1$,
 it follows that, for some $\sigma >0$,
$
c_R =O( e^{-\sigma R}) .
$
On the other hand, again using that $\| \phi_R\|_{L^2(B_{2R})}=1$,  standard elliptic estimates yield that
$\| \phi_R\|_{L^\infty(B_{2R})}\lesssim  1$.

\medskip
Let us represent $\phi_R(x) = \psi_R(r)$ using the variation of parameters formula.
The function $\psi_R$ satisfies the ODE
\be \label{plrk}
\mathcal L_0[\psi_R]   = h_R(r) ,
 \quad
r\in (0,R),\quad \psi_R'(0) = \psi_R(R) = 0 \ee
where $h_R(r) = -\la_R \psi_R (r) +  c_RZ_0(r). $ Furthermore,  it is uniformly bounded in $R$.

For
$Z= Z_{n+1}$, we  consider a second, linearly independent solution $\ttt Z(r)$ of this problem,
namely ${\mathcal L}_0 [ \tilde Z ] = 0$,  normalized
in such a way that their Wronskian satisfies
$$
\ttt Z' Z(r)  - \ttt Z Z'(r) =  \frac 1{r^{n-1}} .
$$
Since $Z(r) \sim 1  $ near $r=0$ and $ Z(r) \sim r^{2-n}$ as $r\to \infty$, we see
that $\ttt Z(r) \sim  r^{2-n}$ near $r=0$ and $\ttt Z(r)\sim 1$ as $r\to \infty$.
The formula of variation of parameters then yields the representation
\be \psi_R(r) =  \ttt Z(r)\int_0^r h_R(s)\,Z(s)\, s^{n-1}\,ds  +   Z(r) \int_r^{2R}   h_R(s)\,\ttt Z(s)\, s^{n-1}\,ds  -  A_RZ(r)
\label{ll0}\ee
where $ A_R $ is such that  $\psi_R (2R) =0$, namely
$$
A_R =   Z(2R)^{-1} \ttt Z(2R)\int_0^{2R} h_R(s)\,Z(s)\, s^{n-1}\,ds.
$$
We observe that
$ \|h_R\|_{L^2(B_{2R})} \lesssim \la_R + e^{-\sigma R} .$
Then we estimate
$$
\left |  \int_0^r h_R(s)\,Z(s)\, s^{n-1}\,ds \right | \ \le \ \|Z\|_{L^2(B_{2R})} \|h_R\|_{L^2(B_{2R})} \, \lesssim \,  (\la_R  +  e^{-\sigma R})  \|Z\|_{L^2(B_{2R})}
$$
and
$$
\left |  \int_r^{2R}   h_R(s)\,\ttt Z(s)\, s^{n-1}\,ds  \right | \ \lesssim \  R^{\frac n2} ( \la_R  +  e^{-\sigma R}).
$$
Hence we have for instance,
$$
\|A_RZ \|_{L^2(B_{2R})}  \, \lesssim \,   R^{n-2} (\la_R  +  e^{-\sigma R} ) \|Z \|_{L^2(B_{2R})},
$$
and estimating the other two terms we obtain at last, thanks to \equ{cont},
\be\label{sa}
\|\phi_R\|_{L^2(B_{2R})} \le       R^{n-2} (\la_R  +  e^{-\sigma R} )  \|Z\|_{L^2(B_{2R})} .\ee
At this point we notice
$\|Z\|_{L^2(\R^n)} < +\infty$.
This, the fact that $\la_R = o( R^{2-n})$ and relation \equ{sa}  yield that $\|\phi_R\|_{L^2(B_{2R})} \to 0$. This is a contradiction and estimate \equ{ineq0}
is thus proven.
\qed

 \medskip
  We let $\chi (s)$ be a smooth cut-off function as in \equ{chi}.
 We consider a large but fixed number $M$ which we will fix later independently of $R$ and define $\chi_M (y) = \chi ( |y| -M)$.
 Let us consider the auxiliary problem, for a general $h(y,\tau)$,

  \be\label{a1}
 \phi_\tau = \Delta \phi   + pU(r)^{p-1}(1-\chi_M)\phi  + h(y,\tau)  \inn B_{2R}\times (\tau_0,\infty)
\ee
 $$
\phi = 0 \onn  \pp B_{2R} \times (\tau_0,\infty)  ,\quad \phi(\cdot, 0) = 0 \inn B_{2R},
$$
By standard parabolic theory this problem has a unique solution, which defines a linear operator of $h$.

\begin{lemma} \label{lemaphi0}
Let $\phi_*[h]$ denote the unique solution of Problem $\equ{a1}$.
If $M$ is fixed sufficiently large, then the  following estimate holds, provided that $\tau_0$ was chosen fixed and sufficiently large,
\be \label{esphi0}
| \phi_*[h]|\ \lesssim\      \tau^{-\nu} \| h\|_{\nu,a}  \left \{ \begin{matrix}  (1+ r)^{2-a}    & \hbox{ if } a> 2 \\ \log R  & \hbox{ if } a= 2 \\ R^{2-a}   & \hbox{ if } a < 2.
 \end{matrix} \right.
\ee
\end{lemma}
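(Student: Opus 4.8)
The plan is to construct an explicit supersolution (in the sense of parabolic comparison) for Problem \equ{a1}, exploiting the fact that the potential $pU^{p-1}(1-\chi_M)$ has been truncated so that it vanishes inside the ball $\{|y| \le M\}$ where $U^{p-1}$ is largest, while outside that ball it decays like $|y|^{-4}$ (since $U^{p-1} = O(|y|^{-4})$), hence is small and handled by a Hardy-type inequality. First I would observe that by standard parabolic theory the solution $\phi_*[h]$ exists, is unique and depends linearly on $h$, so it suffices to bound it. Since $|h(y,\tau)| \le \|h\|_{\nu,a}\,\tau^{-\nu}(1+|y|)^{-a}$, I would separate variables and look for a supersolution of the form $\bar\phi(y,\tau) = C\,\|h\|_{\nu,a}\,\tau^{-\nu}\,\Phi(|y|)$, where $\Phi$ is a positive radial function on $B_{2R}$ with $\Phi = 0$ on $\partial B_{2R}$. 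Because $\tau^{-\nu}$ is decreasing in $\tau$, the time-derivative term $\pp_\tau \bar\phi = -\nu\,C\|h\|_{\nu,a}\,\tau^{-\nu-1}\Phi \le 0$ actually helps: it contributes a nonpositive term to $\pp_\tau\bar\phi - \Delta\bar\phi - pU^{p-1}(1-\chi_M)\bar\phi - h$, so it is enough to choose $\Phi$ satisfying the \emph{elliptic} inequality
$$
-\Delta\Phi - pU^{p-1}(1-\chi_M)\Phi \ \ge\ \frac{1}{(1+|y|)^a} \inn B_{2R}, \quad \Phi = 0 \onn \partial B_{2R}.
$$

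The construction of $\Phi$ is the heart of the argument and splits by the size of $a$ relative to $2$. For the model $-\Delta\Phi_0 = (1+|y|)^{-a}$ on $\R^n$ with $n \ge 5$, the radial solution behaves like $(1+r)^{2-a}$ when $a>2$ (bounded growth, decaying for $a\ge 2$, actually one needs $a<3$ so the right side is integrable enough; here $a<3$ is in the hypothesis), like $\log r$ when $a=2$, and grows like $r^{2-a}$ only through a constant $\sim R^{2-a}$ after imposing the Dirichlet condition at $2R$ when $a<2$. In each case I would take $\Phi$ to be a suitable multiple of this model solution (truncated / corrected so that it vanishes on $\partial B_{2R}$ and stays positive), giving exactly the three regimes listed in \equ{esphi0}. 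Then I must absorb the error term $pU^{p-1}(1-\chi_M)\Phi$: since $1-\chi_M$ is supported in $\{|y|>M\}$ where $pU^{p-1} \le C_n M^{-4}(1+|y|)^{-4}$, and since $\Phi(|y|)(1+|y|)^{-4}$ is dominated (for $M$ large) by a small multiple of $(1+|y|)^{-a}$ whenever $a<4$, this term is a small perturbation of the right-hand side provided $M$ is fixed large enough. This is precisely where the parameter $M$ enters and why it must be chosen large but independent of $R$.

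The main obstacle I anticipate is making the absorption step rigorous in the borderline regime $a$ close to $2$ (where $\Phi$ is only logarithmically or boundedly growing) and near the outer boundary $\partial B_{2R}$, where the Dirichlet correction to $\Phi$ could a priori spoil positivity of the differential inequality. To handle this I would (i) use the explicit variation-of-parameters representation for the radial model operator $\mathcal L_0[\psi] = \psi'' + \frac{n-1}{r}\psi' + pU^{p-1}\psi$, as already set up in the proof of Lemma \ref{lema01}, together with the two independent homogeneous solutions $Z=Z_{n+1}$ and $\ttt Z$ there, so that the correction term is controlled by $R^{2-n}$-type factors that are harmless against the $R^{n-2}$ and $R^n$ weights appearing elsewhere; and (ii) choose $\tau_0$ large so that the $\tau^{-\nu-1}$ gain from the time derivative dominates any leftover lower-order positive contribution. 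Once the elliptic supersolution $\Phi$ is in hand, parabolic comparison on $B_{2R}\times(\tau_0,\infty)$ with zero initial and boundary data gives $|\phi_*[h]| \le \bar\phi$, which is exactly \equ{esphi0}. The linearity of $h \mapsto \phi_*[h]$ is immediate from the linearity of the problem.
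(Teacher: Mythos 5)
Your overall strategy --- separate variables, reduce to an elliptic inequality for a radial profile $\Phi$, then apply parabolic comparison --- is exactly the paper's. The paper also builds the profile by a radial ODE argument. However, your absorption step for the potential is incorrect in the regime $a\le 2$, which is needed (Lemma \ref{slow} is invoked in Proposition \ref{prop1} with small $a$). When $a<2$ the profile of the model problem $-\Delta\Phi_0=(1+r)^{-a}$ with Dirichlet data on $\partial B_{2R}$ does not behave like $(1+r)^{2-a}$; it is essentially the constant $R^{2-a}$ on most of the ball. Hence on $\{M<|y|\lesssim R\}$ one has $pU^{p-1}(1-\chi_M)\,\Phi_0\sim (1+|y|)^{-4}R^{2-a}$, and requiring this to be $\le\varepsilon(1+|y|)^{-a}$ near $|y|=M$ forces $R^{2-a}\lesssim M^{4-a}$, i.e.\ $M$ must grow with $R$, contradicting the requirement that $M$ be fixed independently of $R$. (The same issue occurs at $a=2$ with $\log R$ in place of $R^{2-a}$.) Your claim that $\Phi(1+|y|)^{-4}\lesssim \varepsilon(1+|y|)^{-a}$ ``whenever $a<4$'' is only valid for $a>2$. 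A secondary point: the time derivative $\partial_\tau\bar\phi\le 0$ does not help; since the supersolution inequality is $\partial_\tau\bar\phi-\Delta\bar\phi-V\bar\phi\ge |h|$, a negative $\partial_\tau\bar\phi$ subtracts from the left side, and this is precisely why $\tau_0$ must be taken large (of order $R^2$) so that the loss $\nu\tau^{-1}\Phi$ is dominated by the elliptic surplus.

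The paper sidesteps the absorption entirely by solving the \emph{exact} radial ODE $\mathcal L_M[g]+(1+r)^{-a}=0$, $g'(0)=0=g(2R)$, by variation of parameters for the truncated operator $\mathcal L_M$ itself. The role of $M$ large there is only to guarantee that the two homogeneous radial solutions of $\mathcal L_M$ behave like $g_1\sim r^{2-n}$ and $g_2\sim 1$ (in particular $g_2$ stays positive), which makes the variation-of-parameters representation well controlled and manifestly positive. Then the asymptotics of $g$ in the three regimes of $a$ are read off directly from the formula, with no perturbative step, and comparison concludes. Your point (i) (switching to a variation-of-parameters representation) is the right fix, but one must use the \emph{truncated} operator $\mathcal L_M$, not $\mathcal L_0$ --- otherwise the leftover term $pU^{p-1}\chi_M\Phi$ runs into the same order-$R^{2-a}$ problem on $\{|y|\le M+2\}$.
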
 \proof
Let us consider the problem
\be\label{ii}
 \mathcal L _M [g]  +   \frac 1{1+r^a} = 0, \quad g'(0) = 0 = g(R) ,
 \ee
where
$$
 \mathcal L _M [g] =  g'' + \frac{n-1}r g'    +  pU^{p-1}(1-\chi_M) g .
$$
 We observe that the function $g_1(r) = Z_{n+1}(r) \sim r^{2-n} $ is an exact solution of $\mathcal L(g_1) = 0$ for $r>M+2 $. Let us assume that $M$ is chosen so large that $g_1'(M+2) <0$, $g_1(M+2 )>0$. Since, for $r<M+2$ $g_1$, must extend as a linear combination of a constant and the  fundamental solution of the Laplacian, we conclude that also $g_1(r) \sim r^{2-n}$ as $r\to 0$. The second linearly independent solution
 $g_2$ with $g_2(0)=1$ is constant up to $M$ and then continues, converging to a positive constant as $r\to \infty$. So $g_2(r) \sim 1$ for all $r$. The variation of parameters formula expresses then the unique solution of Problem \equ{ii}
as
\be
g(r) = g_2(r) \int_r^{2R} \frac {d\rho}{g_2 ^2\rho^{n-1}} \int_0^\rho  \frac { g_2 (s) s^{n-1} ds} {1+s^a} .
\label{jj}\ee
Then letting
$$
\bar \phi(r,\tau ) =  2  \| h\|_{\nu,a} \tau^{-\nu}g(r)
$$
 and choosing  $\tau_0$ sufficiently large in terms of $R$, we see that $\bar\phi $ is a positive  supersolution of Problem \equ{a1}. By parabolic comparison
 we then have $ |\phi_*[h]| \le \bar \phi $. Finally,
 directly checking the asymptotic behavior of formula \equ{jj}, estimate \equ{esphi0} readily follows. \qed

\bigskip
 \noindent
{\bf The Proof of Lemma \ref{slow}}.

\medskip
\noindent
{\bf Construction in the radial case.}
We shall first  solve Problem \equ{modo0} in the radial case, $h=h_0(r)$,
where  $\| h_0\|_{\nu,a}< +\infty$.
Let us consider the function
$$
\bar h_0 = h_0 -  c_0(\tau) Z_0 , \quad      c_0(\tau) =  \int_{B_{2R}} hZ_0
$$
and let $\phi_*[\bar h_0]$ be the solution of \equ{a1} as in Lemma \ref{lemaphi0}, which is of course radial.  Setting $\phi =  \phi_*[\bar h_0] + \ttt \phi $ and $c(\tau) = c_0(\tau) + \ttt c(\tau)$,
Problem \equ{modo0} gets reduced to solving

 \be\label{modo01}
 \ttt \phi_\tau = \Delta \ttt \phi   + pU(r)^{p-1} \ttt \phi  + \ttt h_0(r,\tau) - \ttt c(\tau) Z_0\inn B_{2R}\times (\tau_0,\infty)
 \ee
$$
\ttt \phi = 0 \onn  \pp B_{2R} \times (\tau_0,\infty)  ,\quad \ttt \phi(\cdot, \tau_0) = 0 \inn B_{2R} .
$$
where
$$
\ttt h_0 = p U^{p-1}\chi_M  \phi_*[\bar h_0 ] .
$$
The most important feature of $\ttt h_0$ is that it is radial, compactly supported with size controlled by that of $\bar h_0$. In particular we have that for any $m>0$,
\be \label{tete}
\|  \ttt h_0  \|_{m,\nu}\ \lesssim\  \| \phi_*[\bar h_0]  \|_{0,\nu}\ 
\ee
We shall next solve Problem \equ{modo01} under the additional orthogonality constraint
\be \label{modo02}
\int_{B_{2R}}  \ttt \phi (\cdot , \tau )\, Z_0   \ =\ 0 \foral \tau\in (\tau_0, \infty ) .
\ee
Problem
\equ{modo01}-\equ{modo02} is equivalent to solving just \equ{modo01} for $\ttt c$ given by the explicit linear functional
$\ttt c := \ttt c[\ttt \phi, \ttt h_0] $  determined  by the relation
\be \label{cc}
\ttt c(\tau) \int_{B_{2R}} Z_0^2    =  \int_{B_{2R}} \ttt  h_0 (\cdot, \tau)  Z_0     +     \int_{\pp B_{2R}} \pp_r \ttt \phi(\cdot, \tau)  Z_0 .
\ee
Since $Z_0( R) = O(e^{-\gamma R})$ for some $\gamma>0$, the dependence in $\phi$ of this functional  is small for instance in an $L^\infty$-$C^{1+\alpha, \frac {1+\alpha} 2} $ setting. This implies that standard linear parabolic theory together with a contraction argument apply to yield existence of a unique solution to \equ{modo01}-\equ{modo02} defined at all times. Next we shall elaborate further  on explicit estimates on $\ttt \phi$. Crucial in this endeavor is the mild coercivity estimate for the quadratic form $Q$ in Lemma \ref{lema01}. To get the estimates, smoothness of the data can be assumed so that integrations by parts and differentiations can be carried over, and then arguing by approximations.
Testing \equ{modo01}-\equ{modo02} against $\ttt \phi$ and integrating in space, we obtain the relation
$$
\pp_\tau \int_{B_{2R}} \ttt \phi^2     + Q(\ttt \phi,\ttt \phi)  =  \int_{B_{2R}} g\ttt \phi, \quad g= \ttt h_0 - \ttt c(\tau) Z_0 .
$$
Using the estimate in Lemma \ref{lema01} we get that for some $ \gamma >0$,
\be\label{en}
\pp_\tau \int_{B_{2R}} \ttt \phi^2     + \frac{\gamma }{R^{n-2}} \int_{B_{2R}} \ttt \phi^2  \lesssim   R^{n-2}\int_{B_{2R}} g^2 .
\ee
We observe that from \equ{cc} and \equ{tete} for $m=0$  we get that
$$
|\ttt c(\tau)| \le \tau^{-\nu } \left [ \| \phi_*(\bar h_0)  \|_{0,\nu}   + e^{-\gamma R}  \|\nn_y \ttt \phi \|_{0,\nu} \right ] .
$$
Besides,  using again estimate \equ{tete} for a sufficiently large $m$, we get
$$
\int_{B_{2R}} g^2 \ \lesssim\     \tau^{-2\nu } \left [ \| \phi_*(\bar h_0)  \|_{0,\nu}  + e^{-\gamma R}  \|\nn_y \ttt \phi \|_{0,\nu} \right ]^2  .
$$
Using that $\ttt\phi(\cdot, \tau_0) =0$ and Gronwall's inequality, we readily get from \equ{en} the $L^2$-estimate
\be\label{K}
\|\ttt \phi (\cdot , \tau ) \|_{L^2(B_{2R})} \  \lesssim \ \tau^{-\nu } R^{n-2}  K, \quad K:=  \left [ \| \hat h  \|_{0,\nu}  + e^{-\gamma R}  \|\nn_y \ttt \phi \|_{0,\nu} \right ] .
\ee
for all $\tau > \tau_0$. Now, using standard parabolic estimates in the equation satisfied by $\ttt \phi$ we obtain then that on any large fixed radius $M>0$,
$$
\|\ttt \phi (\cdot , \tau ) \|_{L^\infty(B_M)} \  \lesssim \ \tau^{-\nu } R^{n-2} K \foral \tau > \tau_0.
$$
Since the data in the equation has arbitrarily fast space decay, we can dominate the solution  outside $B_M$ by a barrier of the order $\tau^{-\nu} |y|^{2-n} $.
As a conclusion, also using local parabolic estimates for the gradient, we find that
$$
 |\nn_y \ttt \phi (y , \tau )| +  |\ttt \phi (y , \tau )|   \ \lesssim\    \tau^{-\nu } R^{n-2} K \, |y|^{2-n} ,
 $$
thus from the definition of $K$ we finally get
\be \label{i1}
|\nn_y \ttt \phi (y , \tau )| + |\ttt \phi (y , \tau )| \ \lesssim  \tau^{-\nu } \frac{ R^{n-2}} { |y|^{n-2}} \| \phi_*[\bar h_0]  \|_{0,\nu}   .
\ee
It clearly follows from this estimate, inequality \equ{tete} and  Lemma \ref{lemaphi0} that
the function
\be\label{p0} \phi^0 [h^0] :=  \ttt \phi + \phi_*[\bar h_0] \ee
solves Problem \equ{modo0}  for $h=h_0$ and satisfies
$$
  |\phi_0(y,\tau)|
   \ \lesssim   \    \tau^{-\nu} \frac {R^{n-2}} { 1+ |y|^{n-2} }   \theta_R^0\| h^0\|_{\nu,a}  \quad
$$
with $\theta_R^0$ given in \equ{theta}.
Finally, from \equ{cc} we see that
we have that  $$ c(\tau) =  \int_{B_{2R}}  h Z_0   +  \int_{B_{2R}}  pU^{p-1} \chi_M \phi_*[\bar h_0 ] \,Z_0   + O  (e^{-\gamma R} ) \|h\|_{a,\nu}. $$
From here and applying again Lemma \ref{lemaphi0} we find the validity of estimate
$$
\left |  c(\tau) -  \int_{B_{2R}} h_0Z_0 \right | \, \lesssim \, \tau^{-\nu}\, \Big [\,  \theta_R^0 \left \| h^0 -  Z_0 \int_{B_{2R}} h_0Z_0\, \right \|_{ \nu, a} +     e^{-\gamma R} \|h_0\|_{ \nu, a} \Big ].
$$
Hence estimates \equ{cota11} and \equ{cota22} hold. The construction of the solution at mode 0 is concluded.

\bigskip
\subsubsection*{\bf Construction at modes $1$ to $n$.}
Here we consider the case $h=h^1$ where
$$
h^1(y,\tau)  =  \sum_{j=1}^nh_j(r,\tau) \Theta_j.
$$
The function
\be\label{p1}
\phi^1[h^1] :=  \sum_{j=1}^n\phi_j(r,\tau) \Theta_j.
\ee
solves the initial-boundary value problem
\be\label{modo1}
 \phi_\tau = \Delta \phi   + pU(y)^{p-1} \phi  + h^1(y,\tau)  \inn B_{2R}\times (\tau_0,\infty)
 \ee
$$
\phi = 0 \onn  \pp B_{2R} \times (\tau_0,\infty)  ,\quad \phi(\cdot, \tau_0 ) = 0 \inn B_{2R}.
$$
We shall estimate this solution.  The functions $\phi_j(r,\tau)$ solve

\be\label{modo11}
 \pp_\tau \phi_j  = \mathcal L_1 [\phi_j]  + h_j(r,\tau)   \inn (0,2R)\times (\tau_0,\infty)
 \ee
$$
\partial_r \phi_j(0,\tau) = 0 = \phi_j(R,\tau)  \foral \tau\in (\tau_0,\infty)  ,\quad \phi_j(r, \tau_0 ) = 0 \foral r\in (0,R),
$$
where
\be\label{mathcalL1}
\mathcal L_1 [\phi_j] := \pp_{rr}\phi_j  + (n-1)\frac{ \pp_{r}\phi_j } r -(n-1)\frac{\phi_j } {r^2}  + pU(r)^{p-1} \phi_j.
\ee
Let us assume that
$
\| h_j\|_{a,\nu} < +\infty,
$
so that
$$ |h^1(r,\tau ) | \ \le \    \tau^{-\nu } \| h^1\|_{a,\nu} (1+r)^{-a}. $$
Let us consider
the solution of the stationary problem
$$
  \mathcal L_1 [\phi]  +  (1+r)^{-a} =0
$$
given by the variation of parameters formula
$$
\bar \phi(r) =  Z(r) \int_r^{2R} \frac 1{\rho^{n-1} Z(\rho)^2} \int_0^\rho (1+s)^{-a} Z(s)s^{n-1}\, ds
$$
where $Z(r) = w_r(r).$ Since $w_r(r)\sim r^{1-n}$ for large $r$, we find the estimate
$$
|\bar \phi(r) | \ \lesssim \ \frac { R^{n} \theta_R^1  } {1 + r^{n-1}}  , \quad
$$
where $\theta_R^1$ is precisely the number defined in \equ{theta}.
Then, provided that $\tau_0$ was  chosen sufficiently large, the function  $2\|h_j\|_{a,\nu} \tau^{-\nu} \bar \phi (r) $ is a positive supersolution of Problem  \equ{modo11}
 and thus  we find
$$
|\phi_j(r,\tau) |   \ \lesssim \ \tau^{-\nu}  \frac { R^{n} \theta_R^1   } {1 + r^{n-1}}\| h_j\|_{a,\nu} .
$$
Hence $\phi^1[h^1]$ given by \equ{p1} satisfies
$$
|\phi^1[h^1] (y,\tau) |   \ \lesssim \ \frac { R^{n} \theta_R^1   } {1 + |y|^{n-1}}\| h^1\|_{a,\nu} .
$$
so that estimate \equ{cota22} is satisfied by this function

\medskip
\subsubsection*{\bf The case of higher modes. }
We consider now the case

$$h= h^\perp = \sum_{j=n+1}^\infty h_j(r) \Theta_j$$
and the problem
\be\label{modok}
 \phi_\tau = \Delta \phi   + pU(y)^{p-1} \phi  + h^\perp  \inn B_{2R}\times (\tau_0,\infty)
 \ee
$$
\phi = 0 \onn  \pp B_{2R} \times (\tau_0,\infty)  ,\quad \phi(\cdot, \tau_0 ) = 0 \inn B_{2R},
$$
whose solution has the form
$$\phi^\perp = \sum_{j=n+1}^\infty \phi_j(r, \tau) \Theta_j$$
Our first claim is that for $\phi^\perp\in H_0^1(B_{2R})$  we have the coercivity estimate for the quadratic form \equ{Q}
\be\label{co}
\int_{B_{2R}} \frac {|\phi^\perp|^2} {r^2} \lesssim Q(\phi^\perp ,\phi^\perp )
\ee
Indeed, we can decompose
$$
Q(\phi^\perp ,\phi^\perp ) =   c\sum_{j=n+1}^\infty Q_j(\phi_j,\phi_j)
$$
where
$$
Q_j(\psi,\psi) = \int_0^{2R} \left [\, |\psi'|^2 - pw^{p-1}\psi^2 + \mu_j\frac{\psi^2}{r^2}\,\right ]\, r^{n-1}dr\, ,
$$
and $\mu_j = \ell(n-2+ \ell)$ for some  $\ell \ge 2$.
It follows that
\be\label{qq}
Q_j(\phi_j,\phi_j) \ge    Q_1(\phi_j,\phi_j) + (n+1) \int_0^{2R} \frac{\phi_j^2}{r^2}r^{n-1}dr.
\ee
But we have,  $Q_1(\phi_j,\phi_j) \ge 0$.
In fact, writing $\phi_j = w_r\psi_j$ we  get the identity
$$
Q_1(\phi_j,\phi_j) =   \int_0^{2R}   |\psi_j'|^2\, w_r^2r^{n-1}dr\, \ge\,  0
$$
Hence, after addition of the terms in \equ{qq} we get
$$
Q(\phi^\perp ,\phi^\perp ) \ge    c\sum_{j=n+1}^\infty  \int_0^{2R} \frac{|\phi_j|^2}{r^2}r^{n-1}dr = c \int_{B_{2R}} \frac {|\phi^\perp|^2} {r^2} .
$$
and thus estimate \equ{co} holds.
Let $\phi_*[h^\perp] $ be the solution in Lemma \ref{lemaphi0}. Then $\phi_*[h^\perp]$ only has components in spherical harmonics $\Theta_j$ with $j\ge 2$.
By writing $\phi =\phi_*[h^\perp] +\ttt\phi$, Problem \equ{modok}   reduces to solving

\be
 \ttt\phi_\tau = \Delta \ttt\phi   + pU(y)^{p-1} \ttt \phi  + \ttt h  \inn B_{2R}\times (\tau_0,\infty)
 \ee
$$
\ttt \phi = 0 \onn  \pp B_{2R} \times (\tau_0,\infty)  ,\quad \ttt \phi(\cdot, \tau_0 ) = 0 \inn B_{2R},
$$
where
$$
\ttt h = pU(y)^{p-1}\chi_M \phi_*[h^\perp] ,
$$
for a sufficiently large $M$.
Arguing as in \equ{en} we now get
\be\label{en1}
\pp_\tau \int_{B_{2R}} \ttt \phi^2     + c\int_{B_{2R}} \frac {|\ttt \phi|^2}{|y|^2}   \lesssim   \int_{B_{2R}} |y|^2 |\ttt h|^2  .
\ee
Similarly to \equ{K}, using the estimates for in Lemma \ref{lemaphi0} we get

\be\label{K1}
\|\, |y|^{-1} \ttt \phi (\cdot , \tau ) \|_{L^2(B_{2R})} \  \lesssim \ \tau^{-\nu } \theta^0_R  \| h  \|_{a,\nu}
\ee
where $\theta^0_R$ is defined in \eqref{theta}.
From elliptic estimates we then get that
$$
\|\ttt \phi (\cdot , \tau ) \|_{L^\infty(B_{2R})} \  \lesssim \ \tau^{-\nu }  \theta^0_R \| h^\perp  \|_{a,\nu} .  \foral \tau > \tau_0,
$$
so that with the aid of a barrier we obtain

$$
 |\ttt \phi (y , \tau )|   \ \lesssim\    \tau^{-\nu } \theta^0_R \| h^\perp  \|_{a,\nu} \,(1+ |y|)^{2-n} .
 $$
  It follows that the function
\be \label{pperp}
 \phi^\perp[h^\perp]   : =  \ttt \phi + \phi_*[ h^\perp ] \ee  satisfies
$$
|\phi^\perp [h^\perp] (y , \tau )|    \ \lesssim\   \tau^{-\nu } \left [ \theta^0_R \,(1+ |y|)^{2-n}    +  (1+|y|)^{a-2}  \right ]\, \| h^\perp  \|_{a,\nu}\inn B_{2R}.
$$

\subsubsection*{\bf Conclusion.}
We simply let
$$
\phi[h] := \phi^0[h^0] +  \phi^1[h^1] + \phi^\perp[h^\perp]
$$
for the functions defined in \equ{p0}, \equ{p1}, \equ{pperp}. By construction, $\phi[h]$
solves Equation \equ{modo0}. It defines a linear operator of $h$ and satisfies the conclusions of Lemma \ref{slow}.
The proof is concluded. \qed

\bigskip
\noindent
{\bf Proof of Proposition \ref{prop1}}. \ \
As in the proof of Lemma \ref{slow} we decompose $h$ in modes,
$h= h^0+ h^1 +h^\perp$,
$$
h^0 = h_0(r , \tau ), \quad h^1= \sum_{j=1}^n h_j(r , \tau )\Theta_j  ,  \quad h^\perp= \sum_{j=n+1}^\infty h_j(r, \tau )\Theta_j
$$
and define separately associated solutions of \equ{p11} in a decomposition $\phi= \phi^0 + \phi^1+ \phi^\perp$.

\medskip
For a bounded radial $h=h(|y|)$  defined in $B_{2R}$ with $\int_{B_{2R}} hZ_{n+1}=0$
the equation
$$
\Delta H + pU^{p-1}H + \ttt h_0(|y|) = 0 \inn \R^n , \quad H(y) \to  0 \ass |y|\to \infty
$$
where $\ttt h$ designates the extension of $h_0$ as zero outside $B_{2R}$,
has a  solution $H =: L_0^{-1}[h] $ represented by the variation of parameters formula

\be  H(r) =  \ttt Z(r)\int_r^{\infty}  \ttt h(s)\,Z(s)\, s^{n-1}\,ds  +   Z(r) \int_r^{\infty}   \ttt h(s)\,\ttt Z(s)\, s^{n-1}\,ds
\label{ll0}\ee
where $Z= Z_{n+1}(r)$, and $\ttt Z(r)$ is a suitable second linearly independent radial solution of $ L_0[\ttt Z] = 0 $.

If we consider a function $h_0 = h_0(|y|,\tau)$ defined in $B_{2R}$ with $\|h_0\|_{2+a, \nu} <+\infty $  and $\int_{B_{2R}} h_0 Z_{n+1}=0$ for all $\tau$, then
$H_0= L_0^{-1}[h_0(\cdot, \tau) ]$ satisfies the estimate
$$
\| H_0\|_{a, \nu} \lesssim \| h_0\|_{2+ a, \nu} .
$$
Let us consider the boundary value problem in $B_{3R}$
\be\label{modo00}
 \Phi_\tau = \Delta \Phi   + pU(y)^{p-1} \Phi  + H_0(|y|,\tau) - c_0(\tau) Z_0\inn B_{3R}\times (\tau_0,\infty)
 \ee
$$
\Phi = 0 \onn  \pp B_{3R} \times (\tau_0,\infty)  ,\quad \Phi(\cdot, \tau_0 ) = 0 \inn B_{3R}.
$$
Invoking Lemma \ref{slow} we find a radial solution $\Phi_0[h_0 ]$ to this problem, which defines a linear operator of $h_0$ and satisfies the estimates

\be
  |\Phi_0(y,\tau) |  \ \lesssim   \  \frac{\tau^{-\nu} R^{n-2}} { 1+ |y|^{n-2}}  R^{2-a}  \| H_0 \|_{\nu,a} , \quad
\label{cota111}\ee
where for some $\gamma >0$
\be
\left |  c_0(\tau) -  \int_{B_{2R}} H_0Z_0 \right | \, \lesssim \, \tau^{-\nu}\, \Big [\,  R^{2-a} \left \| H_0 -  Z_0 \int_{B_{2R}} H_0Z_0\, \right \|_{ \nu, a} +     e^{-\gamma R} \|h_0\|_{ \nu, 2+a} \Big ].
\label{cota222}\ee
At this point we observe that since $L_0[Z_0] = \la_0 Z_0$ then
$$
\la_0 \int_{B_{2R}} H_0Z_0 = \int_{B_{2R}} H_0 L_0[ Z_0]  =    \int_{B_{2R}} L_0[H_0]\, Z_0  + \int_{\pp B_{2R}}  (Z_0 \pp_\nu H_0 -  H_0 \pp_\nu Z_0),
$$
and hence
$$
 \int_{B_{2R}} H_0Z_0    =   \la_0^{-1} \int_{B_{2R}} h_0\, Z_0  +  O(e^{-\gamma R}) \tau^{-\nu} \| h_0\|_{2+ a, \nu} .
$$
Also, from the definition of the operator $L_0^{-1}$ we see that $  Z_0  =  \la_0 L_0^{-1}[Z_0] $.
Thus
$$
\left \| H_0 -  Z_0 \int_{B_{2R}} H_0Z_0\, \right \|_{ \nu, a} = \left \|L_0^{-1}\Big [ h_0 -  \la_0 Z_0 \int_{B_{2R}} H_0Z_0  \Big ] \, \right \|_{ \nu, a}
$$
$$
\ \lesssim \
\left \| h_0 -  Z_0 \int_{B_{2R}} h_0Z_0\, \right \|_{ \nu, 2+a } +     e^{-\gamma R} \|h_0\|_{ \nu, 2+a}.
$$
Let us fix now a  vector $e$ with $|e|=1$, a large number $\rho>0$ with $\rho \le 2R$ and a number $\tau_1 \ge \tau_0$. Consider the change of variables
$$
\Phi_\rho (z,t) := \Phi ( \rho e +  \rho z , \tau_1 +  \rho^2 t ), \quad H_\rho (z,t) := \rho^2 [ H_0 ( \rho e +  \rho z , \tau_1 +  \rho^2 t ) - c_0( \tau_1 +  \rho^2 t) Z_0(\rho e +
\rho z)\, ].
$$

Then $\Phi_\rho( z, t) $ satisfies an equation of the form
$$
 \pp_t \Phi_\rho  = \Delta_z \Phi_\rho   +  B_\rho(z,t)  \Phi_\rho   +     H_\rho  (z,t)  \inn B_1(0) \times (0,2). 
$$
where  $B_\rho = O(\rho^{-2})$ uniformly in $B_2(0) \times (0,\infty)$.
Standard parabolic estimates yield that for any $0<\alpha < 1$
$$
 \|\nn_z \Phi_\rho \|_{L^\infty  (B_{\frac 12 }(0) \times (1,2))  } \lesssim  \|\Phi_\rho \|_{L^\infty ( B_{1 }(0) \times (0,2)) } + \| H_\rho \|_{L^\infty  (B_{1 }(0) \times (0,2)) }  .
$$
Moreover
$$
\| H_\rho \|_{L^\infty  (B_{1 }(0) \times (0,2)) }\ \lesssim\  \rho^{2-a}\tau_1^{-\nu}\| H_0 \|_{a,\nu} ,\quad
\|\Phi_\rho \|_{L^\infty  (B_{1 }(0) \times (0,2)) } \lesssim  \tau_1^{-1} K(\rho)
$$
where
\be\label{Knew}
K(\rho) =
\frac{ R^{n-2}} {  \rho^{n-2}}  R^{2-a}  \| h^0\|_{\nu,a}
\ee
This yields in particular that
$$
\rho |\nn_y \Phi(\rho e , \tau_1 + \rho^2 )|  \ = \  |\nn\ttt \phi(0, 1)| \lesssim   \tau_1^{-\nu} K(\rho) .
$$
Hence if we choose $\tau_0 \ge R^2$,
we get that for any $\tau > 2\tau_0$ and $|y|\le  3R$
\be \label{coco}
(1+ |y|)\, |\nn_y \Phi(y , \tau  )|  \ \lesssim  \   \tau^{-\nu} K(|y|)
\ee
We obtain that these bounds are as well valid for $\tau < 2\tau_0$   by the use of similar parabolic estimates up to the initial time (with condition 0).

\medskip
Now, we observe that the function $H_0$ is of class $C^1$ in the variable $y$ and  $\|\nn_y H_0\|_{1+ a, \nu} \le  \|h^0\|_{2+a,\nu} $. It follows that we have the estimate
$$
(1+ |y|^2)\, | D^2_y \Phi(y,\tau) |\ \lesssim \  \tau^{-\nu} K(|y|)
$$
for all $\tau> \tau_0$, $|y|\le  2R  $.
where $K$ is the function in \equ{Knew}.
The proof follows simply by differentiating the equation satisfied by $\Phi$, rescaling in the same way we did to get the gradient estimate, and apply the bound already proven for
 $\nn_y \Phi$.

$$
 (1+|y|^2) |D^2\Phi(y,\tau)| +  (1+|y|) | \nn \Phi(y,\tau)| \  + \  |\Phi(y,\tau)|
 $$
 $$
 \, \lesssim\,     \tau^{-\nu} \|h^0\|_{2+a,\nu} \frac{R^{n-a}} {1+ |y| ^{n-2}}  \inn B_{2R}  .
$$
This yields in particular
$$
| L_0 [\Phi](\cdot, \tau ) |\ \lesssim\  \tau^{-\nu} \|h^0\|_{2+a,\nu} \frac{R^{n-a}} {1+ |y|^n } \inn B_{2R}
$$
We define
$$
\phi^0[h_0] := L_0 [\Phi]\, \Big |_{B_{2R}}.
$$
Then  $\phi^0[h_0]$ solves Problem \equ{p11}
with
\be\label{ccnew}
c(\tau) := \la_0 c_0(\tau).
\ee
$\phi^0[h_0]$ satisfies the estimate
\be\label{pp0}
| \phi^0[h_0] (y, \tau) | \ \lesssim \ \tau^{-\nu} \|h_0\|_{2+a,\nu} \frac{R^{n}} {1+ |y|^n } R^{-a} \inn B_{2R}.
\ee
and from \equ{cota222},
estimate \equ{cta2} holds too.


\subsubsection*{\bf The construction for modes $1$ to $n$.}

We consider now
$$h^1(y,\tau) =  \sum_{j=1}^n h_j (r , \tau ) \Theta_j $$  with  $\|h^1\|_{\nu,2+a } <+\infty$
that satisfies for all $i=1,\ldots, n$
\be
\int_{B_{2R}} h^1 Z_i = 0 \foral \tau\in (\tau_0, \infty).
\label{ortio1}\ee
We will show that
there is  a solution $$ \phi^1[h^1]  = \sum_{j=1}^n \phi_j (r, \tau ) \Theta_j({y \over r}) $$
to Problem $\equ{p11}$ for $h= h^1$, which define a linear operator of $h^1$
and satisfies the estimate
\be
  |\phi^1(y,\tau)|  \ \lesssim   \   \frac {R^{n+1}}{ 1+|y|^{n+1} }R^{-a}     \|h\|_{\nu, 2+a} .
\label{cta4}\ee
The construction is similar to that of $\phi^0$. Let us fix $1\le j\le n$.
For a function  $h = h_j(r  ) \Theta_j  ({y \over r})  $ defined in $B_{2R}$, we
let $H= L_0^{-1}[h] := H_j(r) \Theta_j ({y \over r}) $ be the solution of the equation
$$
\Delta H + pU^{p-1}H + \ttt h_j \Theta_j  = 0 \inn \R^n , \quad H(y) \to  0 \ass |y|\to \infty
$$
where $\ttt h_j$ designates the extension of $h_j$ as zero outside $B_{2R}$, represented by the variation of parameters formula
$$
H_j(r) =  w_r(r) \int_r^{2R} \frac 1{\rho^{n-1} w_r(\rho)^2} \int_\rho^\infty \ttt h_j(s)\, w_r(s)s^{n-1}\, ds
$$
If we consider a function $h^j = h_j(r,\tau)\Theta_j$ defined in $B_{2R}$ with $\|h^j\|_{2+a, \nu} <+\infty $  and $\int_{B_{2R}} h^jZ_{j}=0$ for all $\tau$, then
$H_j= L_0^{-1}[h^j(\cdot, \tau) ]$ satisfies the estimate
$$
\| H_j\|_{a, \nu} \lesssim \| h_j \|_{2+ a, \nu} .
$$
Let us consider the boundary value problem in $B_{3R}$
\be\label{modo001}
 \Phi_\tau = \Delta \Phi   + pU(y)^{p-1} \Phi  + H_j(r)\Theta_j (y) \inn B_{3R}\times (\tau_0,\infty)
 \ee
$$
\Phi = 0 \onn  \pp B_{3R} \times (\tau_0,\infty)  ,\quad \Phi(\cdot, \tau_0 ) = 0 \inn B_{3R}.
$$
Invoking Lemma \ref{slow} we find a solution $\Phi_j[h]$ to this problem, which defines a linear operator of $h_j$ and satisfies the estimates
\be
  |\Phi_j(y,\tau) |  \ \lesssim   \  \frac{\tau^{-\nu} R^{n}} { 1+ |y|^{n-1}}  R^{1-a}  \| h_j\|_{2+ a, \nu} , \quad
\label{cota112}\ee
Arguing by scaling and parabolic estimates, we find as in the construction for mode 0,
$$
| L[\Phi_j](\cdot, \tau ) |\ \lesssim\  \tau^{-\nu} \|h\|_{2+a,\nu} \frac{R^{n+1-a}} {1+ |y|^{n+1}} \inn B_{2R}.
$$
We define
$$
\phi_j[h_j] := L[\Phi_j]\, \Big |_{B_{2R}}.
$$
Then  $\phi_j[h]$ solves the equation \equ{p11}  and satisfies
$$
| \phi_j[h_j] (y, \tau) | \ \lesssim \ \tau^{-\nu} \|h_j\|_{2+a,\nu} \frac{R^{n+1}} {1+ |y|^{n+1} } R^{-a} \inn B_{2R}.
$$
We then define
\be\label{fastmodo1}
\phi^1[h^1] := \sum_{j=1}^n \phi_j[h_j]\Theta_j .
\ee
This function solves \equ{p11} for $h= h^1$ and satisfies
\be\label{pp1}
| \phi^1[h^1] (y, \tau) | \ \lesssim \ \tau^{-\nu} \|h_j\|_{2+a,\nu} \frac{R^{n+1}} {1+ |y|^{n+1} } R^{-a} \inn B_{2R}.
\ee

\subsubsection*{\bf Higher modes.}
Now, for
$$h= h^\perp = \sum_{j=n+1}^\infty h_j(r) \Theta_j$$
we let $\phi^\perp[ h^\perp]$ be just the unique solution of the problem
\be\label{modoknew}
 \phi_\tau = \Delta \phi   + pU(y)^{p-1} \phi  + h^\perp  \inn B_{2R}\times (\tau_0,\infty)
 \ee
$$
\phi = 0 \onn  \pp B_{2R} \times (\tau_0,\infty)  ,\quad \phi(\cdot, \tau_0 ) = 0 \inn B_{2R},
$$
which 
is estimated, according to Lemma \ref{slow} as
\be\label{pport}
|\phi^\perp [h^\perp] (y , \tau )|    \ \lesssim\   \tau^{-\nu }\frac {\| h^\perp  \|_{a,\nu}}{ 1+|y|^{a}}\, \inn B_{2R}.
\ee

\subsubsection*{\bf Conclusion.}
We just let
$$
\phi [h]:= \phi^0[h^0] + \phi^1[h^1]+ \phi^\perp [h^\perp]
$$
be the functions constructed above. According to estimates \equ{pp0}, \equ{pp1} and \equ{pport} we find that
this function solves Problem  \equ{p11} for $c(\tau)$ given by \equ{cc}, with bounds \equ{cta1} and \equ{cta2} as required. The proof is concluded. \qed

\bigskip
{\it  Acknowledgments}.  This work has been supported by grants Fondecyt 1150028,  1160135, 115066,  Millennium Nucleus Center for Analysis of PDE NC13001, and Fondo Basal CMM.

\bigskip

\end{document}